\def\namedlabel#1#2{\begingroup
    #2%
    \def\@currentlabel{#2}%
    \phantomsection\label{#1}\endgroup
    \newcounter{tmp@cnt}
\newcommand*\@labelpunc{.}
}
\numberwithin{equation}{section}
\newtheorem{theorem}{Theorem}[section]
\newtheorem{claim}[theorem]{Claim}
\newtheorem{proposition}[theorem]{Proposition}
\newtheorem{lemma}[theorem]{Lemma}
\newtheorem{corollary}[theorem]{Corollary}
\newtheorem*{theorem*}{Theorem}
\newtheorem*{claim*}{Claim}
\newtheorem*{proposition*}{Proposition}
\newtheorem*{lemma*}{Lemma}
\newtheorem*{corollary*}{Corollary}
\newtheoremstyle{named}{}{}{\itshape}{}{\bfseries}{.}{.5em}{\thmnote{#3}}
\theoremstyle{named}
\newtheorem*{namedtheorem}{Theorem}
\theoremstyle{definition}
\newtheorem{definition}[theorem]{Definition}
\newtheorem{remark}[theorem]{Remark}
\newtheorem{notation}[theorem]{Notation}
\newtheorem*{definition*}{Definition}
\newtheorem*{observation*}{Observation}
\newtheorem*{remark*}{Remark}
\newtheorem*{example*}{Example}
\newtheorem*{question*}{Question}
\newtheorem*{exercise*}{Exercise}
\newtheorem*{fact*}{Fact}
\newtheorem*{notation*}{Notation}
\newcommand{\bbH}{\mathbb{H}}
\newcommand{\bbN}{\mathbb{N}}
\newcommand{\bbQ}{\mathbb{Q}}
\newcommand{\bbR}{\mathbb{R}}
\newcommand{\bbZ}{\mathbb{Z}}
\newcommand{\bfc}{\mathbf{c}}
\newcommand{\bfs}{\mathbf{s}}
\newcommand{\bft}{\mathbf{t}}
\newcommand{\bfw}{\mathbf{w}}
\newcommand{\calA}{\mathcal{A}}
\newcommand{\calB}{\mathcal{B}}
\newcommand{\calC}{\mathcal{C}}
\newcommand{\calD}{\mathcal{D}}
\newcommand{\calF}{\mathcal{F}}
\newcommand{\calH}{\mathcal{H}}
\newcommand{\calL}{\mathcal{L}}
\newcommand{\calN}{\mathcal{N}}
\newcommand{\calP}{\mathcal{P}}
\newcommand{\actson}{\curvearrowright}
\newcommand{\bdelta}{\boldsymbol{\delta}}
\newcommand{\tild}[1]{\widetilde{#1}}
\newcommand{\comb}[1]{\calC_{comb}(#1)}
\newcommand{\cyl}[1]{\calC_{cyl}(#1)}
\newcommand{\cusped}[1]{\calC(#1)}
\newcommand{\geod}[1]{\llbracket {#1} \rrbracket}
\newcommand{\checkH}[2]{{\check{H}^{#1}}{(#2)}}
\newcommand{\vol}[1]{\textsc{Vol}(#1)}
\newcommand{\immers}{\looparrowright}
\DeclareMathOperator{\df}{def}
\DeclareMathOperator{\supp}{supp}
\DeclareMathOperator{\cd}{cd}
\DeclareMathOperator{\C}{C}
\DeclareMathOperator{\depth}{Depth}
\DeclareMathOperator{\prd}{pd}
\DeclareMathOperator{\Isom}{Isom}
\title{Finite Index Rigidity of Relatively Hyperbolic Groups}
\author{Nir Lazarovich\thanks{Supported by the Israel Science Foundation (grant no. 1576/23)}, Gon Rahamim, and Alessandro Sisto}
\date{September 2025}
\newcommand{\Addresses}{{
  \bigskip
  \footnotesize

  \bigskip
    \noindent
  \textsc{Department of Mathematics, Technion -- Israel Institute of Technology, Haifa, Israel}\par\nopagebreak \noindent
    \textit{E-mail address:} \texttt{lazarovich@technion.ac.il}

   \bigskip
    \noindent
  \textsc{Department of Mathematics, Technion -- Israel Institute of Technology, Haifa, Israel}\par\nopagebreak \noindent
    \textit{E-mail address:} \texttt{gon.rahamim@campus.technion.ac.il}

    \bigskip
    \noindent
  \textsc{Department of Mathematics, Heriot-Watt University and Maxwell Institute for Mathematical Sciences, Edinburgh, UK}\par\nopagebreak \noindent
    \textit{E-mail address:} \texttt{a.sisto@hw.ac.uk}

}}
\begin{document}

\maketitle{}
\begin{abstract}
We prove that, given a torsion-free relatively hyperbolic group $G$ with non-relatively-hyperbolic peripherals, isomorphic finite index subgroups of $G$ have the same index. This applies for instance to fundamental groups of finite-volume negatively curved manifolds, to limit groups, and to free-by-cyclic groups. More generally, we show that if two finite index subgroups of a relatively hyperbolic group are isomorphic via a map that respects their peripheral structures, then their indices in the ambient group are equal. The proof relies on demonstrating that the number of simplices in a simplicial classifying space of a finite index subgroup in a relatively hyperbolic group grows linearly with its index. These results generalize earlier work of the first author in the context of hyperbolic groups.


\end{abstract}

\section{Introduction}\label{section:introduction}
A group $G$ is {\em finite index rigid} if every two isomorphic subgroups of finite index have the same index in $G$. For example, the group $\bbZ$ is \emph{not} finite index rigid, since it has two isomorphic subgroups $2\bbZ\cong3\bbZ$ with $[\bbZ:2\bbZ]\neq[\bbZ:3\bbZ]$. Currently known families of finite index rigid groups include finite groups, groups with non-zero Euler characteristic (such as non-abelian free groups and hyperbolic surface groups), groups with some non-zero $l^2$-Betti number, lattices in simple Lie groups \cite{mostow1973strong} (such as fundamental groups of finite-volume hyperbolic n-manifolds), groups with infinitely many ends \cite{sykiotis2018complexity}, and most recently, non-elementary hyperbolic groups \cite{lazarovich2023finite}.

Our main result is the following. We say that an infinite finitely generated group $G$ is \emph{NRH} if it is not hyperbolic relative to any collection of proper subgroups (for the definition of relatively hyperbolic groups, see \ref{Def: Relatively hyperbolic grups}).

\begin{theorem}
\label{thm:nrh}
  Let a group $G\not\simeq\bbZ$ be torsion-free, hyperbolic relative to type-F and NRH proper subgroups $\calP$. Then $G$ is finite index rigid.
\end{theorem}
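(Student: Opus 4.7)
The plan is to deduce Theorem~\ref{thm:nrh} from a stronger, peripheral-preserving rigidity statement: if $H_1,H_2\le G$ are finite-index subgroups and $\phi\colon H_1\to H_2$ is an isomorphism carrying the induced peripheral structure $\calP_{H_1}$ to $\calP_{H_2}$, then $[G:H_1]=[G:H_2]$. Working in the category of relatively hyperbolic group pairs is natural here because the type-F hypothesis on $\calP$ yields a finite simplicial model for the classifying space of the pair, obtained, e.g., by gluing finite $K(P,1)$'s to a simplicial classifying space of the hyperbolic ``core''.

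To prove the peripheral-preserving statement, I would introduce a numerical invariant $\sigma(G,\calP)$ counting the simplices outside the peripheral subcomplexes of a minimal simplicial classifying space of the pair $(G,\calP)$. The key lemma---implicit in the abstract---asserts that $\sigma(H,\calP_H)=[G:H]\cdot\sigma(G,\calP)$ for every finite-index $H\le G$, with $\sigma(G,\calP)>0$; the hypothesis $G\not\cong\bbZ$ is precisely what ensures that $\sigma$ genuinely scales linearly with the index (for $\bbZ$ itself, any cover of a circle is a circle). The upper bound is immediate, since a degree-$[G:H]$ cover of a minimal simplicial model for $(G,\calP)$ is a classifying space for $(H,\calP_H)$ with $[G:H]$ times as many essential simplices. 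The lower bound would be proved by a combinatorial/volumetric argument in the Groves--Manning cusped space, showing that simplices outside the horoballs must fill the hyperbolic core with linear density in the covering degree. Granted this key lemma, any isomorphism of pairs $(H_1,\calP_{H_1})\cong(H_2,\calP_{H_2})$ forces $\sigma(H_1,\calP_{H_1})=\sigma(H_2,\calP_{H_2})$, and hence $[G:H_1]=[G:H_2]$.

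The bridge from the peripheral-preserving theorem to Theorem~\ref{thm:nrh} is supplied by the NRH hypothesis together with the fact (due to Drutu--Osin and Drutu--Sapir) that in a group hyperbolic relative to NRH subgroups, the peripheral subgroups are intrinsically characterized---for example, as the maximal almost-malnormal NRH subgroups. For any finite-index $H\le G$, the induced peripheral structure $\calP_H$ therefore depends only on the isomorphism type of $H$, so any abstract isomorphism $H_1\cong H_2$ of finite-index subgroups of $G$ automatically sends $\calP_{H_1}$ to $\calP_{H_2}$, and the peripheral-preserving version then applies.

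The main obstacle is clearly the lower bound in the key lemma. Classical multiplicative invariants---Euler characteristic, Gromov simplicial volume, $L^2$-Betti numbers---are insufficient in this generality, since they vanish for many of the groups covered by the theorem (e.g., free or free-by-cyclic groups). So one needs a genuinely combinatorial argument tailored to relative hyperbolicity, quantifying how much the hyperbolic part of any simplicial model for $(H,\calP_H)$ can be compressed once the peripheral part is fixed. Executing this step is where the bulk of the technical work lies.
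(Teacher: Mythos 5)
Your overall architecture matches the paper's: reduce \Cref{thm:nrh} to a peripheral-preserving rigidity statement (this is the paper's \Cref{thm:fin-ind}), prove that statement via a simplex-counting invariant of relative classifying spaces that scales linearly with the index, and then use the NRH hypothesis to show that any abstract isomorphism of finite-index subgroups automatically respects the induced peripheral structures. However, two steps as you state them would not go through. First, your ``key lemma'' asserts exact multiplicativity, $\sigma(H,\calP_H)=[G:H]\cdot\sigma(G,\calP)$, for the minimal simplex count itself. Only the upper bound has a soft proof (pass to the cover); there is no transfer construction producing a small complex for $(G,\calP)$ from a minimal one for $(H,\calP_H)$, and the volumetric lower bound you envisage in the Groves--Manning cusped space can only give a linear lower bound $\alpha[G:H]\le \C(H,\calP_H)$ with $\alpha$ depending on $G$, not equality. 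This is exactly what the paper proves (\Cref{Theorem: Main} together with \eqref{easy inequality}), and the missing device that converts two-sided linear bounds into equality of indices is Reznikov's regularization $\underline{\C}(G,\calP)=\liminf_H \C(H,\calP_H)/[G:H]$, which \emph{is} multiplicative, positive and finite; without it (or the unproven exact multiplicativity) your final deduction ``$\sigma(H_1)=\sigma(H_2)\Rightarrow[G:H_1]=[G:H_2]$'' has no basis. Note also that the lower-bound argument requires $G$ one-ended (it uses separating tracks), so the multi-ended case must be split off separately, as the paper does by citing Sykiotis; and the lower bound itself---resolutions of the Mineyev/Groves--Manning bicombing into weighted patterns plus a cohomological uniform quasi-surjectivity statement---is the bulk of the work, which your proposal defers.

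Second, the bridge to \Cref{thm:nrh} is argued via an ``intrinsic characterization of peripherals as the maximal almost-malnormal NRH subgroups.'' No such characterization is available as stated (it is not known, nor needed, that every maximal almost-malnormal NRH subgroup is peripheral). The argument that actually closes, and the one the paper uses, is: each $P\in\calP_H$ is NRH (NRH is a quasi-isometry invariant, hence passes to finite-index subgroups) and undistorted (relative quasiconvexity of peripherals, Osin); an isomorphism $\phi:H\to H'$ preserves undistortedness since it is an isomorphism of the ambient groups; by Behrstock--Dru\c{t}u--Mosher, an undistorted NRH subgroup of a relatively hyperbolic group lies in a conjugate of a peripheral; and applying this to both $\phi$ and $\phi^{-1}$, the fact that distinct conjugates of peripherals cannot be properly nested (Osin) forces the containments to be equalities, yielding an isomorphism of pairs $(H,\calP_H)\simeq(H',\calP_{H'})$. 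So your bridge is the right idea, but it must be routed through undistortedness and the containment theorem rather than an unestablished maximality characterization.
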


There are many classes of groups known to be NRH, including for instance non-virtually-cyclic nilpotent groups \cite{dructu2005tree}, and using known results in this regard we obtain:

\begin{corollary}
\label{cor:list}
The following are finite-index rigid:
\begin{enumerate}
\item fundamental groups of complete finite-volume manifolds of pinched negative curvature,
\item non-abelian limit groups,
    \item torsion-free group hyperbolic relative to nilpotent subgroups,
    \item free-by-cyclic groups with respect to an exponentially growing automorphism.
\end{enumerate}
\end{corollary}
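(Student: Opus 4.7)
The plan is, in each of the four cases, to apply Theorem~\ref{thm:nrh} by exhibiting a relatively hyperbolic structure on $G$ whose peripherals are proper, torsion-free, type F, and NRH, and then checking $G\not\simeq\bbZ$. Whenever a candidate peripheral turns out to be virtually cyclic, I would absorb it into the hyperbolic part using the standard refinement of relatively hyperbolic structures due to Bowditch and Osin; this either removes it from the peripheral collection while preserving relative hyperbolicity, or, if every peripheral collapses, leaves $G$ itself hyperbolic, in which case finite index rigidity follows from the earlier paper \cite{lazarovich2023finite}.

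For case (1), by Farb and Bowditch, $G=\pi_1(M)$ is hyperbolic relative to its cusp subgroups; the Margulis lemma in pinched negative curvature forces these to be virtually nilpotent, hence (since $G$ is torsion-free) nilpotent and poly-$\bbZ$, and in particular of type F. By \cite{dructu2005tree}, non-virtually-cyclic nilpotent groups are NRH, while rank-one cusps are absorbed as above, and elementary area/volume considerations rule out $\pi_1(M)\simeq\bbZ$. For case (2), I would invoke Dahmani's theorem that a non-abelian limit group is hyperbolic relative to (representatives of) its maximal non-cyclic abelian subgroups; these are $\bbZ^k$ with $k\geq 2$, which are manifestly type F and, being abelian of rank at least two, NRH. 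For case (3), the hypotheses of Theorem~\ref{thm:nrh} are essentially part of the assumption, once one absorbs any virtually cyclic peripherals and excludes the trivial possibility $G=\bbZ$.

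Case (4) is the step I expect to require the most care. I would use the relatively hyperbolic structure on $G=F_n\rtimes_\phi\bbZ$ with $\phi$ exponentially growing provided by Gautero--Lustig (see also Dahmani--Li and Ghosh), whose peripherals are mapping tori of maximal $\phi$-invariant polynomially growing subgroups of $F_n$. These peripherals are finitely presented and torsion-free, and they are of type F because they admit finite aspherical two-dimensional classifying spaces (mapping tori of graph maps coming from relative train tracks). The main obstacle is to verify that they are NRH: one must rule out any nontrivial relatively hyperbolic structure on $F_k\rtimes_\psi\bbZ$ with $\psi$ polynomially growing. I would try to argue this from the Kolchin-type structure theorem for polynomially growing free group automorphisms, which produces abundantly many distorted $\bbZ^j$ subgroups; together with Bowditch's constraints on peripheral subgroups, these should force any nontrivial peripheral to swallow the whole group, contradicting properness. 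Finally, $G\not\simeq\bbZ$ since $\phi$ being exponentially growing requires $n\geq 2$, so Theorem~\ref{thm:nrh} applies.
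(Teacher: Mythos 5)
Your treatment of items (1)--(3) is correct and essentially the paper's argument, just organized differently: the paper reduces (1) and (2) to (3) (via Farb and Dahmani's combination theorem) and then, as you do, removes virtually cyclic peripherals using \cite[Corollary 1.14]{dructu2005tree} and invokes \cite[Corollary 6.14]{dructu2005tree} for NRH of non-virtually-cyclic nilpotent groups, with the degenerate hyperbolic case handled by \cite{lazarovich2023finite}. Your direct verifications for (1) and (2) use the same ingredients and are fine.

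The genuine gap is in case (4), and it is exactly the step you flag as delicate. You take the same relatively hyperbolic structure as the paper (peripherals are mapping tori of maximal polynomially growing $\phi$-invariant subgroups, as in Ghosh and Dahmani--Li), and type F of these peripherals is fine, but your verification that a polynomially growing free-by-cyclic group $F_k\rtimes_\psi\bbZ$ is NRH is only a sketch, and as stated it does not go through. Producing distorted $\bbZ^j$ subgroups via the Kolchin theorem does not by itself obstruct relative hyperbolicity: in any hypothetical peripheral structure such subgroups are merely forced to be parabolic (they may well be distorted inside an undistorted peripheral), so there is no immediate contradiction with ``Bowditch's constraints.'' To force a proper peripheral to ``swallow the whole group'' one needs a connectedness statement --- a network of such subgroups chained along infinite intersections covering the group, so that almost malnormality of peripherals collapses everything into one parabolic --- and establishing this for arbitrary polynomially growing $\psi$ is precisely the nontrivial content of the results the paper cites: these mapping tori are thick in the sense of \cite{behrstock2009thick} by \cite[Theorem 1.2]{hagen2019remark} (see also \cite{macura2002detour}), hence NRH by \cite[Corollary 7.9]{behrstock2009thick}. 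Either carry out such a thickness/chaining argument in detail or cite these results; as written, the claim that the Kolchin filtration ``should force'' the conclusion is an unproved assertion, not a proof.
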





A common strategy for proving that a group is finite index rigid is to associate to each of its subgroups some quantity which is proportional to its index in $G$. 

Let us demonstrate this strategy in the context of fundamental groups of finite-volume hyperbolic $n$-manifolds. Let $M$ be a complete hyperbolic $n$-manifold of finite volume. Then the fundamental group $\Gamma = \pi_1(M)$ can be seen as a lattice in the simple Lie group $G = \Isom(\bbH^n)$ of isometries of the $n$-dimensional hyperbolic space, and $M = \bbH^n / \Gamma$. 
Mostow's Rigidity Theorem \cite{mostow1968quasi,prasad1973strong} states that, for $n\geq 3$, isomorphic lattices in $G$ are conjugate in $G$. 
Therefore, if $\Gamma_1, \Gamma_2\leqslant\Gamma$ are isomorphic finite index subgroups, then in particular $\vol{\bbH^n/\Gamma_1}=\vol {\bbH^n/\Gamma_2}$. Observe  that the volume is multiplicative in the index, i.e. 
$$[\Gamma:\Gamma_1]\cdot\vol{\bbH^n/\Gamma}=\vol{\bbH^n/\Gamma_1}=\vol {\bbH^n/\Gamma_2}=[\Gamma:\Gamma_2]\cdot\vol{\bbH^n/\Gamma},$$
and so $[\Gamma:\Gamma_1] = [\Gamma:\Gamma_2]$.

This example is of particular interest to us since $\Gamma = \pi_1(M)$ is a torsion free group which is relatively hyperbolic to subgroups isomorphic to $\bbZ^{n-1}$. Note that \Cref{cor:list}-(1) is more general. 

Following the strategy demonstrated above, we introduce a complexity invariant, from which we will be able to manufacture a multiplicative invariant.

\begin{definition}\label{notation: complexity}
 Given a group pair ($G$, $\mathcal{P}$), a \emph{(simplicial) relative classifying space} is a pair $(\bar{X}, \bar{\mathcal{A}})$ consisting of a classifying space $\bar{X}$ for $G$ (i.e. $\pi_1(\bar{X}) = G$ and $\pi_n(\bar{X}) = 0$ for all $n \geq 2$) and a collection of disjoint subcomplexes $\bar{\mathcal{A}} = \{\bar{A}_1, \dots, \bar{A}_n\}$ such that each $\bar{A}_i$ is a classifying space for $P_i$, and the inclusion $\bar{A}_i \hookrightarrow \bar{X}$ induces the inclusion $P_i \hookrightarrow G$ in $\pi_1$.

 We define the \emph{complexity} $\C(G, \mathcal{P})$ to be the minimal number of cells in a simplicial relative classifying space $Y$ for $(G, \mathcal{P})$. We refer to the number of cells in $Y$ as the \emph{volume} of $Y$, denoted $\vol{Y}$.
\end{definition}

\begin{theorem}\label{Theorem: Main}
    Let $G$ be a torsion-free, one-ended hyperbolic group relative to type-F proper subgroups $\calP$. Then there exists $\alpha=\alpha(G)$ such that for all $H\leqslant G$ of finite index
    \begin{equation}\label{difficult inequality}
    \alpha[G:H]\leq \C(H,\calP_H)
    \end{equation}
    where $\calP_H$ are the induced peripheral on $H$ (see \Cref{Def:Induced peripheral structure on subgroups}).
\end{theorem}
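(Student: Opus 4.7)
The plan is to mimic the argument of \cite{lazarovich2023finite} from the hyperbolic setting, replacing the Rips complex of a hyperbolic group with the Groves--Manning cusped space $\calC(G,\calP)$, which is itself Gromov hyperbolic. I will introduce a homotopy-invariant ``relative volume'' $v(\cdot)$ on simplicial relative classifying spaces, show it is bounded above by the simplex count, multiplicative under finite-index covers, and positive on a canonical finite model. Together these three properties immediately give the theorem, since for any relative classifying space $Y$ for $(H,\calP_H)$ we will have $\vol(Y) \geq v(Y) = [G:H]\cdot v(Z) = \alpha\cdot [G:H]$.

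\textbf{Step 1 (canonical cusped model).} Because the peripherals are type F, I build a finite simplicial relative classifying space $(Z,\calA_Z)$ for $(G,\calP)$ by thickening $\calC(G,\calP)$ into a Rips-type simplicial complex, truncating the horoballs at a uniform depth (producing peripheral boundary subcomplexes), and capping them off with chosen finite classifying spaces for each $P\in\calP$. The cover $Z_H$ corresponding to $H$ is then a relative classifying space for $(H,\calP_H)$ with $\vol(Z_H)=[G:H]\cdot\vol(Z)$.

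\textbf{Step 2 (the homotopy invariant).} Let $d=\cd(G,\calP)$. For any simplicial relative classifying space $Y=(\bar Y,\bar\calA)$ for $(H,\calP_H)$, define $v(Y)$ to be the $\ell^1$-simplicial seminorm of the relative fundamental class in $H_d(\bar Y,\bigcup\bar A_i;\bbR)$. Then $v(Y)$ is at most the number of $d$-simplices of $\bar Y$, hence $v(Y)\leq\vol(Y)$. Homotopy invariance of the $\ell^1$-seminorm of the relative fundamental class gives $v(Y)=v(Z_H)$, and the standard finite-cover multiplicativity for seminorms of relative fundamental classes yields $v(Z_H)=[G:H]\cdot v(Z)$.

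\textbf{Step 3 (positivity) and main obstacle.} The crucial input is $v(Z)>0$; by $\ell^1$--$\ell^\infty$ duality this amounts to producing a relatively bounded top-degree cocycle on $(Z,\bigcup\calA_Z)$ pairing nontrivially with the relative fundamental class. I would obtain such a cocycle from Mineyev's flow construction of bounded primitives on Gromov hyperbolic spaces, applied to $\calC(G,\calP)$, arranged to vanish on horoballs so as to descend to the relative setting. This is the principal technical challenge: in the absolute hyperbolic case, Mineyev's theorem directly guarantees non-trivial top bounded cohomology; in the relative setting the peripherals themselves need not have any useful bounded cohomology, so one must exploit the fact that the hyperbolicity driving the argument belongs to $\calC(G,\calP)$, not to $G$ or to its peripherals. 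The type-F assumption enters precisely in allowing a controlled finite truncation of horoballs that does not destroy the cocycle construction, while the one-ended hypothesis will be needed to ensure the top-dimensional relative class is non-zero to begin with.
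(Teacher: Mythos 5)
Your proposal hinges on an object that the hypotheses of the theorem do not provide: a canonical, nonzero \emph{relative fundamental class} in $H_d(\bar Y,\bigcup \bar A_i;\bbR)$ for $d=\cd(G,\calP)$. The pair $(G,\calP)$ is only assumed torsion-free, one-ended, and of type F; it need not satisfy any Poincar\'e--Lefschetz duality. Cohomological dimension is detected with $\bbZ G$ coefficients (this is the only nonvanishing the paper ever uses, cf.\ \Cref{lem:coh.dim}), and the top-degree homology with trivial $\bbR$ coefficients may vanish outright or have rank greater than one with no preferred generator. Already in the absolute hyperbolic case (which the theorem is meant to generalize) there are one-ended torsion-free hyperbolic groups of cohomological dimension $2$ with $H_2(G;\bbQ)=0$, e.g.\ many one-relator groups whose relator is not in the commutator subgroup; for such groups your invariant $v$ is identically zero and the chain $\vol(Y)\ge v(Y)=[G:H]\,v(Z)=\alpha[G:H]$ gives nothing. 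Even when a class exists, its compatibility under transfer (needed for the multiplicativity $v(Z_H)=[G:H]\,v(Z)$) and, above all, the positivity $v(Z)>0$ in Step 3 are not technicalities to be supplied by Mineyev-type bounded primitives: positivity of a relative simplicial-volume-type invariant is known only under duality-type hypotheses (manifolds, PD pairs), and is false or vacuous in the generality considered here. So the principal obstacle you flag is genuinely fatal to this route, and there is a second, unflagged gap (existence/canonicity of the class) one step earlier.

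The paper avoids fundamental classes entirely. It takes a complex $K$ realizing $\C(H,\calP_H)$, uses the Groves--Manning globally stable bicombing on the cusped space to resolve it into a $G$-invariant weighted singular pattern on the $2$-skeleton, and proves two bounds: an upper bound $\bfw(\bar\calF_{\le R})\le \boldsymbol\delta_R\cdot\vol{\bar K}$ via bounded defect and a minimal-displacement choice of the equivariant vertex map, and a lower bound $\bfw(\bar\calF_{\le R})\ge \tfrac{1}{\lambda\alpha}[G:H]$ coming from a \emph{uniform quasi-surjectivity} statement: any $H$-equivariant map $\cyl{K}\to\cyl{X}$ extending to the Bowditch boundary hits an $R_0(X)$-neighborhood of every point of $X$, which is proved using $H^k(G,\calP;\bbZ G)\neq 0$ together with the Manning--Wang isomorphism with the \v{C}ech cohomology of the Bowditch boundary and compactly supported cohomology of the cusped space. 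That cohomological input is much weaker than what your approach needs (no trivial-coefficient top class, no norm, no duality), which is exactly why it covers the stated level of generality.
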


A well-known result by Rips states that a torsion-free hyperbolic group admits a finite classifying space \cite{gromov1987hyperbolic}. 
Dahmani \cite{dahmani2003classifying} extended this to the relative hyperbolic case: if a torsion-free group $G$ is hyperbolic relative to some finite collection of type-$F$ subgroups, then it admits a finite relative classifying space. 
We therefore conclude that 
$\C(G,\calP)<\infty.$

Let $G$ be a torsion-free hyperbolic group relative to subgroups $\calP$ of type-F. Let $Z$ be a relative classifying space for $(G,\calP)$ such that $\C(G,\calP) = \vol{Z}$, then the cover that corresponds to $H\leqslant G$ is a relative classifying space for $(H,\calP_H)$.
We then get the following inequality $$\C(H,\calP_H)\le \C(G,\calP)[G:H].$$

We have thus managed to find some $\beta=\beta(G)$ such that for all $H\leqslant G$ of finite index 
    \begin{equation}\label{easy inequality}
        \C(H,\calP_H)\le\beta[G:H] 
    \end{equation}

\begin{theorem}\label{thm:fin-ind}
    Let a group $G\not\simeq \bbZ$ be torsion-free, hyperbolic relative to type-F proper subgroups $\calP$.
    Then, if $H,H'$ are finite index subgroups of $G$ such that $(H,\calP_H)\simeq (H',\calP_{H'})$ then $[G:H]=[G:H']$.
\end{theorem}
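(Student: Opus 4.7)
The plan is to combine Theorem~\ref{Theorem: Main} and~(\ref{easy inequality}) with an iteration argument exploiting the isomorphism of pairs. First, I would reduce to the case $G$ one-ended: if $G$ has more than one end relative to~$\calP$, then $G$ splits over a parabolic subgroup (or the trivial group if $G$ has infinitely many absolute ends), and I would invoke Sykiotis's finite-index rigidity for groups with infinitely many ends~\cite{sykiotis2018complexity} or proceed by induction along a JSJ-style decomposition to its one-ended vertex groups.

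Assuming $G$ is one-ended, Theorem~\ref{Theorem: Main} and~(\ref{easy inequality}), together with the fact that $\C(-,-)$ depends only on the isomorphism class of the pair, give
\[\alpha[G:H] \le \C(H,\calP_H) = \C(H',\calP_{H'}) \le \beta[G:H'],\]
and the symmetric inequality. Hence the ratio $[G:H]/[G:H']$ is confined to $[\alpha/\beta,\beta/\alpha]$.

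The key step is to upgrade bounded ratio to equality by iteration in the containment case. Suppose $H \le H'$ and set $s := [H':H]$. Since $H \le \phi(H) = H'$, restricting $\phi^{-1}$ to $H$ yields an injection $H \hookrightarrow H$ with image $\phi^{-1}(H)$ of index $s$ in $H$; iterating gives subgroups $\phi^{-n}(H) \le H$, each isomorphic to $H$ as a pair via $\phi^{n}$, with $[G:\phi^{-n}(H)] = s^n[G:H]$. Applying the complexity bounds to the isomorphic pair $(H,\phi^{-n}(H))$ forces $\alpha s^n[G:H] \le \beta[G:H]$ for every $n$, whence $s = 1$, i.e., $H = H'$.

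For general $H, H'$ without a containment relation, I would reduce to the containment case by strong induction on $[G:H]+[G:H']$: the restriction of $\phi$ gives an isomorphism of pairs between $K := H\cap H'$ and $K' := \phi(H\cap H')$, both finite-index subgroups of $H'$, with $[H':K]+[H':K'] \le [G:H]+[G:H']$ and strict inequality whenever $\langle H,H'\rangle$ is a proper subgroup of $G$. Since $H'$ itself satisfies the hypotheses of the theorem, induction applies and completes this generic subcase. The principal obstacle is the extreme case $\langle H,H'\rangle = G$ (equivalently $[G:H\cap H']=[G:H][G:H']$), which will require a separate argument---perhaps an extension-of-isomorphism trick or a direct analysis using the peripheral structure to produce a containment pair of the desired ratio.
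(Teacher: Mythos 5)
Your proposal is incomplete at exactly the point you flag yourself, and that point is not a corner case but the heart of the matter. The bounded-ratio estimate $\alpha[G:H]\le \C(H,\calP_H)=\C(H',\calP_{H'})\le\beta[G:H']$ and the iteration argument in the nested case $H\le H'$ are fine (modulo routine checks that the induced peripheral structure is transitive, so that $\phi^{-n}$ really is an isomorphism of pairs). But your reduction of the general case by induction on $[H':K]+[H':K']$ with $K=H\cap H'$, $K'=\phi(H\cap H')$ fails to decrease the measure precisely when $HH'=G$, i.e.\ $[G:H\cap H']=[G:H][G:H']$, and iterating the same construction inside $H'$ reproduces the same sum, so the induction stalls rather than terminates. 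This unhandled case contains, for instance, every pair of subgroups of coprime indices --- which is exactly the configuration a proof of finite index rigidity must rule out (compare $2\bbZ\cong 3\bbZ$ with $2\bbZ+3\bbZ=\bbZ$): the co-Hopf-style iteration you use only sees nested pairs, and there is no obvious ``extension-of-isomorphism trick'' that reduces the transverse case to it. So as written the argument does not prove the theorem.

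The paper avoids the case analysis entirely with a Reznikov-style multiplicative invariant \cite{reznikov1995volumes}: set $\underline{\C}(G,\calP):=\liminf_{[G:H]<\infty}\C(H,\calP_H)/[G:H]$. This quantity is an isomorphism invariant of pairs, satisfies $\underline{\C}(H,\calP_H)=[G:H]\cdot\underline{\C}(G,\calP)$, and is pinched in $[\alpha,\beta]$ by \Cref{Theorem: Main} and \eqref{easy inequality}; since it is positive and finite one may divide, and $[G:H]=[G:H']$ follows for \emph{all} isomorphic pairs of finite-index subgroups at once. That is the missing idea: upgrade the complexity, which is only multiplicative up to the constants $\alpha,\beta$, to an exactly multiplicative invariant before comparing indices. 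A secondary remark: your reduction to the one-ended case conflates ends relative to $\calP$ with absolute ends and invokes a JSJ-type induction that is not needed; the paper's reduction is simply that a torsion-free multi-ended group not isomorphic to $\bbZ$ has infinitely many ends, so the result of \cite{sykiotis2018complexity} applies directly, and otherwise $G$ is one-ended as required by \Cref{Theorem: Main}.
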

Here by an isomorphism $(H,\calP)\simeq (H',\calP')$ we mean, roughly speaking, that the group isomorphism $H\cong H'$ preserves the peripheral structure, see \Cref{defn:pair}.

\Cref{thm:fin-ind} is implied by \Cref{Theorem: Main} and Inequality \eqref{easy inequality}, using an argument that we sketch here and spell out in \S\ref{Section: proof of main}.
Following Reznikov \cite{reznikov1995volumes}, one can define a new group pair invariant by considering the limit inferior over the terms $\frac{\C(H,\calP_H)}{[G:H]}$, where $H\subseteq G$ ranges over all finite index subgroups.
This yields a multiplicative isomorphism invariant of group pairs, which is positive (by \Cref{Theorem: Main} and \Cref{easy inequality}).
The strategy demonstrated above then applies to show that, for isomorphic group pairs, the corresponding isomorphic subgroups have the same index.  

Finally, we argue that for groups as in \Cref{thm:nrh}, isomorphisms between finite-index subgroups necessarily preserve the peripheral subgroups.
Then, applying \Cref{thm:fin-ind}, we conclude that such groups are finite index rigid.

Most of the remainder of this paper is dedicated to the proof of \Cref{Theorem: Main}. 
We begin with a compact simplicial complex $K$ that realizes the complexity $\C(H, \calP_H)$, along with the Cayley graph $X$ of $G$.
By attaching peripheral "cusps" to $X$, one can constructs a space $\cusped{X}$ that is $\delta$-hyperbolic.
Groves and Manning \cite{groves2008dehn} constructed such a graph that also admits Mineyev’s bicombing (which we recall in \S\ref{sec:pre}).

In \S\ref{Section: Resolution}, following Lazarovich \cite{lazarovich2023finite}, we use the bicombing to define a singular weighted pattern $\bar \calF$ on the $2$-skeleton of $\bar K$.
In \S\ref{Section: Bounding the total weight}, we prove that, when we reduce to a subpattern depending on some $R_0\in \bbR$, the total weight of the pattern $\bar\calF_{\leq R_0}$ is bounded above by the complexity of $K$.

In \S\ref{Subsec: cylindrical cusped space} we discuss relative group cohomology and cylindrical cusped spaces, mostly following \cite{manning2020cohomology}. In \S\ref{Section: Quasi surjectivity}, we use cohomological methods of \cite{manning2020cohomology}
to show that a continuous $H$-equivariant map $\Phi: \cusped{K}\to \cusped{X}$
is quasi-surjective, with a constant $R$ depending only on $G$. 
In \S\ref{Section : construction of q.s map}, we use the contractibility properties of Rips complexes to prove that one can construct such a map with the additional property that $X$ can be found in a neighborhood of the image of $\Phi$, even when $\Phi$ is restricted to the $1$-skeleton of $K$. 
Finally, in \S\ref{Section: proof of main} we combine all of these results to prove that the total weight of the pattern $\bar \calF_{\le R}$ is bounded below by the volume of $X/H$, and by above by the complexity of K.
In other words, we show that there exists $\alpha$ and $\beta$ now depending only on $G$, such that \begin{equation}\label{equ: bounded weight}
    \frac {1}{\alpha}\cdot \vol{X/H
 }\leq \text{the total weight of } \bar \calF_{\leq R} \leq \beta \cdot \C(H,P_H).
\end{equation} 
\Cref{Theorem: Main} then follows, and we finish by showing how \Cref{thm:fin-ind} ---and subsequently \Cref{thm:nrh}--- are deduced.

\section{Preliminaries}\label{sec:pre}
\subsection{Basic definitions and notations}
\begin{definition}
    Let $X$ be a geodesic metric space. 
    We use $\geod{x,y}$ to denote some geodesic with endpoints $x,y$. 
    We use $[x,y]$ to denote all points on all such geodesics.
    
    Let $\Delta$ be a geodesic triangle in $X$ with sides $\geod{x,y},\geod{y,z}, \geod{z,x}$ . We say that $\Delta$ is  $\delta$-thin if for every point $m\in \geod{x,y}$ there is a point on $\geod{y,z}\cup\geod{z,x}$ at distance at most $\delta$ of $m$ (and similarly for points on other sides of $\Delta$). 
    
    We say that $X$ is $\delta$-hyperbolic  if  there exists some $\delta>0$ such that every geodesic triangle in $X$ is $\delta$-thin \cite{gromov1987hyperbolic}.
\end{definition}

\subsection{Relative hyperbolic groups}
\begin{definition}
\label{defn:pair}
    A \emph{group pair} $(G,\calP)$ is a group $G$ and a collection $\calP = \{P_1,\dots,P_n\}$ of subgroups $P_i\le G$. 
    
    Two group pairs $(G_1,\calP_1),(G_2,\calP_2)$ are isomorphic if there exists an isomorphism $\phi:G_1\to G_2$ and a bijection $\pi:\calP_1 \to\calP_2$ such that for all $P\in \calP_1$, the image $\phi(P)$ is conjugate to $\pi(P)\in \calP_2$.
\end{definition}

Intuitively, a group pair $(G,\calP)$ is relatively hyperbolic if its Cayley graph is hyperbolic outside the left cosets of $P_1,\dots,P_n$. 
These groups where defined by  Bowditch \cite{bowditch2012relatively} and by Farb \cite{farb1998relatively} based on ideas of Gromov \cite{gromov1987hyperbolic}.
We will use an equivalent definition given by Groves and Manning  \cite{groves2008dehn} which we recall in \Cref{Def: Relatively hyperbolic grups}.

\begin{definition}\label{Def: combinatorial horoball}
    Let $\Gamma$ be a graph.
    The \emph{combinatorial horoball based at $\Gamma$} denoted $\calH:=\calH(\Gamma)$, is a graph with vertex set  $\calH^{(0)}:=\Gamma^{(0)}\times\bbN_0$ and the following edges
    \begin{enumerate}
        \item A \emph{vertical} edge between $(v,m)$ and $(v,m+1)$ for all vertices $v$ of $\Gamma$.
        \item A \emph{horizontal} edge between $(v,m)$ and $(w,m)$ whenever $m$ is an integer and the vertices $v,w$ of $\Gamma$ satisfy $d_\Gamma(v,w)\leq 2^m$.  
    \end{enumerate}
\end{definition}

\begin{definition}
    We define the depth of a vertex $v=(\bar v,m)\in\calH$ to be $\depth(v)=m$.
\end{definition}

\begin{definition}
    Let $G$ be a group, $\calP = \{P_1,\dots,P_n\}$.
    Let $X$ be a connected graph, and $\calA = \{A_1,\dots,A_n\}$ be a collection of connected subgraphs. We write $(G,\calP) \actson (X,\calA)$ if $G$ acts on $X$ and for each $1\le i \le n$, $P_i$ preserves $A_i$. 
    The action is \emph{geometric} if both $G\actson X$ and $P_i\actson A_i$ are geometric actions.
\end{definition}

Note that when $(G,\calP)$ is a finitely generated pair, one can choose a generating set $S$ for $G$ such that $S\cap P_i$ generates $P_i$.
By taking $X$ to be the Cayley graph of $(G,S)$ and $A_i$ to be the Cayley graph of $(P_i,S\cap P_i)$ (as a subgraph of $X$), we get that the action $(G,\calP) \actson (X,\calA)$ is geometric.
In this case, the space $\comb{X,\calA}$ defined next is the \emph{combinatorial cusped graph} defined in \cite{groves2008dehn}. 

\begin{definition}\label{comb{X}}
    When $(G,\calP)$ acts geometrically on $(X,\calA)$, define $$\comb{X,\calA} = X \cup \bigcup_{1\le i\le n,\; g\in G/P_i}\calH(gA_i)$$
    gluing $gA_i\subset X$ with $gA_i\times \{0\}\subset\calH(gA_i)$. 
\end{definition}

\begin{definition}\label{Def: Relatively hyperbolic grups}
    Let $(G,\calP)$ be a finitely generated pair.
    We say that $(G,\calP)$ is relatively hyperbolic if $(G,\calP)$ acts geometrically on some graph pair $(X,\calA)$ such that $\comb{X,\calA}$ is $\delta$-hyperbolic for some $\delta>0$. 
\end{definition}

It is standard to show that being relatively hyperbolic is independent of the graph pair $(X,\calA)$, and so it coincides with the definition of Groves and Manning  \cite[Definition 3.12]{groves2008dehn}.

\medskip

\textbf{Throughout the text we fix the group pair $(G,\calP)$ and a graph pair $(X,\calA)$ on which $(G,\calP)$ acts geometrically.}
We will often write $\comb{X}$ to refer to $\comb{X,\calA}$.
\medskip

Given a relatively hyperbolic group $(G,\calP)$, any finite index subgroup $H\leqslant G$ is relatively hyperbolic with respect to the induced collection
\begin{equation*}\label{Def:Induced peripheral structure on subgroups}
\calP_H=\{H\cap aPa^{-1}|P\in\calP, a\in B_{HP} \}
\end{equation*}
where $B_{HP}$ is a set of representatives of double cosets $\{HgP, g\in G\}$.
\begin{definition}
Let $(G,\calP)$ be a relatively hyperbolic pair. The Gromov boundary of $\comb{X,\calA}$ is denoted $\partial(G,\calP)$, and called the \emph{Bowditch boundary}.
\end{definition}
Note that the induced structure on finite index subgroups yields the homeomorphism $$\partial(H,\calP_H)\cong \partial(G,\calP).$$

\subsection{The Mineyev bicombing}\label{Pre: Mineyev's bicombing}
\begin{notation}
    For a simplicial complex $X$, we denote by $C_n(X)$ its rational $n$-chains. For $a=\sum_\sigma a_\sigma\sigma\in C_n(X)$ we denote $\norm{a}_1=\sum_\sigma\abs{a_\sigma}$ and $\norm{a}_{\infty}=\max_\sigma|a_\sigma|$. We assume that $a_\sigma \ge 0$.

    For a subset $A\subseteq X$, we denote by $\calN_\delta(A)$ the set of edges of $X$ which are at Hausdorff distance at most $\delta$ from some element in $A$. 

    We denote by $X^n$ the collection of $n$-simplices of $X$, and by $X^{(n)}$ its $n$-skeleton.
\end{notation}
\begin{definition}
\label{Def: Globally stable bicombing GSB}
Let $X$ be a simplicial graph, $G\actson X$ a simplicial action. A map $q:X^0\times X^0\to C_1(X)$ is called a \emph{globally-stable bicombing} on $X$ if it satisfies the following properties: 
\begin{enumerate}[label=(B\arabic*), ref=(B\arabic*)]
    \item $q$ is a \emph{$\bbQ$-bicombing}, i.e. for all $x,y\in X^0$, $\partial q(x,y)=y-x$ and $q(x,x)=0.$
    \item\label{Def: quasi-geodesic} $q$ is \emph{quasigeodesic}, i.e. for all $x,y\in X^0$, and geodesic $\geod{x,y}$, there exists $\delta=\delta(X)$ such that $\supp q(x,y)\subseteq\calN_{\delta}(\geod{x,y})$ and $\norm{q(x,y)}_1\leq\delta \cdot d(x,y)$.
    \item $q$ is \emph{$G$-equivariant}, i.e. for all $x,y\in X^0$, $g.q(x,y)=q(g.x,g.y)$.
    \item $q$ is \emph{antisymmetric}, i.e. for all $x,y\in X^0$, $q(x,y)=-q(y,x)$.
    \item\label{Def: bounded defect}  $q$ has \emph{bounded defect}, i.e. for every $x,y,z\in X^0$, there exists $\delta=\delta(X)$ such that  $\norm{q(x,y)+q(y,z)+q(z,x)}_1\leq\delta$.
\end{enumerate}
\end{definition}


Globally-stable bicombing exist for hyperbolic groups (Mineyev {\cite[Theorem 10]{mineyev2001straightening}}) and relatively-hyperbolic groups (Groves-Manning{\cite[Part 1, section 6]{groves2008dehn}}).

\begin{theorem}[Mineyev, Groves-Manning]\label{rem: bicomb}
     Let $(G,\calP)$ be a relatively hyperbolic pair, and let $\comb{X}$  be a combinatorial cusped graph for $(G,\calP)$. Then there exists a globally-stable bicombing on $\comb{X}$.
\end{theorem}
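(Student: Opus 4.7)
The statement is a result of Mineyev \cite{mineyev2001straightening} for hyperbolic groups and its extension by Groves--Manning \cite{groves2008dehn} to the cusped setting, so the plan is to recall how these two constructions combine rather than to produce something new.

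First I would note that the space $\comb{X}$ is by hypothesis (relative hyperbolicity, in the sense of Definition \ref{Def: Relatively hyperbolic grups}) a $\delta$-hyperbolic graph carrying a simplicial $G$-action which is cocompact away from the horoballs, while the stabilizers of the horoball basepoints are the peripheral subgroups acting cocompactly on each $\calH(gA_i)$. The plan then is to construct $q$ by pasting together three pieces: (i)~Mineyev's bicombing on the "thick" part of $\comb{X}$, obtained from hyperbolicity, (ii)~an explicit combinatorial bicombing inside each horoball using its vertical/horizontal structure, and (iii)~a consistency rule at the interface so that the resulting map is equivariant, antisymmetric, quasigeodesic and has bounded defect.

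For (i), following Mineyev, one associates to each pair $(x,y)\in\comb{X}^0\times\comb{X}^0$ a "pencil" — an equivariant family of probability measures on (approximately) geodesic paths from $x$ to $y$ — by averaging a given path over progressively larger shadows at its endpoints. The exponential contraction of shadows in a $\delta$-hyperbolic space guarantees that these averages converge and are nearly invariant under small perturbation of the endpoints; this is what gives the globally stable refinement of his earlier homological bicombing. Setting $q(x,y)$ to be the signed $1$-chain obtained by integrating edges against this pencil (and antisymmetrizing) yields at once properties (B1), (B3), (B4) and the quasigeodesic property (B2) on the thick part. Property (B5), bounded defect, follows because for a triangle $xyz$ the three pencils fellow-travel along any $\delta$-thin triangle, so the signed sum has $\ell^1$-norm controlled by $\delta$ alone.

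For (ii), inside a horoball $\calH(gA_i)$, one defines $q(x,y)$ by the canonical "up-over-down" path: ascend vertically from $x$ to depth $\approx \log_2 d(\bar x,\bar y)$, cross horizontally (at most boundedly many edges, by the doubling condition in Definition \ref{Def: combinatorial horoball}), and descend vertically to $y$. This is manifestly antisymmetric, $P_i$-equivariant, and quasigeodesic, and any triangle inside a horoball has bounded defect because its three up-over-down paths share common vertical segments up to bounded length.

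For (iii), one extends both pieces by using the cusped projection: any $x\in \calH(gA_i)$ projects to $\bar x\in gA_i\subset X$ at depth $0$, and one decomposes $q(x,y)=q(x,\bar x)+q(\bar x,\bar y)+q(\bar y,y)$ up to a bounded error. Antisymmetrizing and invoking the $G$-equivariance of the choices of horoball bicombings (which requires only making $P_i$-equivariant choices for one representative of each $G$-orbit of horoball) gives (B3) and (B4) globally. The main obstacle I would expect is verifying bounded defect (B5) for triangles with vertices lying in different horoballs or mixing horoball and thick parts: there, the entering/leaving points of each side at each horoball must be close enough that the horoball contributions cancel up to a uniform constant. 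This is precisely where one uses that geodesics between distant horoballs in a relatively hyperbolic space penetrate each horoball only a bounded amount and enter/exit through a uniformly bounded region — the "bounded coset penetration" property — so the horoball pieces of $q(x,y)+q(y,z)+q(z,x)$ differ only by a uniformly bounded chain, which added to the Mineyev defect on the thick part gives the required global bound.
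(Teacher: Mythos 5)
The paper does not actually prove this statement: it is quoted with a citation, the hyperbolic case to Mineyev \cite{mineyev2001straightening} and the relatively hyperbolic case to Groves--Manning \cite[Part 1, \S 6]{groves2008dehn}, so the question is whether your sketch would reconstruct their result, and it would not as written. First, step (i) applies Mineyev's theorem to the ``thick part'' of $\comb{X}$, but that part is quasi-isometric to the Cayley graph of $G$, which is in general not hyperbolic (it contains undistorted peripheral subgroups, e.g.\ $\bbZ^{n-1}$); the only hyperbolic object available is the whole cusped space, and Mineyev's construction does not apply to it off the shelf either, since the $G$-action on $\comb{X}$ is not cocompact and the graph is not uniformly locally finite (the valence of vertices at depth $m$ grows with $m$). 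This is exactly why Groves--Manning had to build a bicombing on the cusped space by hand (preferred paths fed into a genuinely homological, fractional averaging) rather than quote Mineyev and patch horoballs on afterwards.

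Second, and decisively, the bounded-defect claim in step (ii) is false for a combing by single up-over-down paths. Take one horoball and $x=(\bar x,0)$, $y=(\bar y,0)$, $z=(\bar z,0)$ with $d_\Gamma(\bar y,\bar z)=1$ and $d_\Gamma(\bar x,\bar y)=N$ large. The paths $x\to y$ and $z\to x$ both climb to depth about $\log_2 N$, but their vertical portions lie over $\bar y$ and over $\bar z$ respectively: these rays $1$-fellow-travel yet use disjoint edge sets, so nothing cancels in the chain sum and $\norm{q(x,y)+q(y,z)+q(z,x)}_1\approx 2\log_2 N$, which is unbounded. Note that \ref{Def: bounded defect} together with \ref{Def: quasi-geodesic} forces $\norm{q(x,y)-q(x,y')}_1$ to be uniformly bounded whenever $d(y,y')=1$; integer chains carried by one canonical path per pair cannot have this $\ell^1$-stability in a horoball, and supplying it is precisely the content of the Mineyev-type averaging --- it is needed inside the horoballs just as much as in the thick part, not only ``at the interface''. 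The gluing step (iii) has a related problem: routing $q(x,y)$ through the depth-zero projection $\bar x$ can violate the support condition of \ref{Def: quasi-geodesic}, since a geodesic from a deep point $x=(\bar x,k)$ may exit the horoball at a vertex at $\Gamma$-distance $\sim 2^k$ from $\bar x$, leaving the vertical segment over $\bar x$ at distance $\sim k$ from any geodesic $[x,y]$. So the pasting strategy as described cannot yield (B2) and (B5); a correct proof should follow the actual Groves--Manning construction on the cusped space.
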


Throughout the text, we fix a globally stable bicombing $q$ on the fixed locally finite hyperbolic graph $\comb{X}$, corresponding to the pair $(G,\calP)$. Any such bicombing has the following properties.

\begin{lemma}[{\cite[Lemma 3.5]{lazarovich2023finite}}] \label{lem : q-bound} 
    There exists $\delta=\delta(X)$ such that for all $x,y\in X^0$,  $\norm{q(x,y)}_\infty\leq \delta$.
\end{lemma}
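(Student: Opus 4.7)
The plan is to bound $|q(x,y)_e|$ uniformly in the triple $(x,y,e)$ by combining the $G$-equivariance of $q$ with the quasi-geodesicity \ref{Def: quasi-geodesic} and bounded-defect \ref{Def: bounded defect} properties, together with $\delta$-hyperbolicity of $\comb{X}$.

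Equivariance (B3) gives $|q(gx,gy)_{ge}| = |q(x,y)_e|$, so $\sup_{x,y}|q(x,y)_e|$ depends only on the $G$-orbit of $e$. Although $\comb{X}$ has infinitely many $G$-orbits of edges (due to the horoballs), each peripheral stabilizer $gP_ig^{-1}$ acts cocompactly on each fixed depth slice of $\calH(gA_i)$, which lets us treat edges on the same footing regardless of their depth. Now fix such an $e$ and a pair $(x,y)$ with $e\in\supp q(x,y)$: by quasi-geodesicity there is a vertex $z\in\geod{x,y}$ with $d(z,e)\le\delta$, and we choose $z$ as a near-midpoint of $\geod{x,y}$. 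Bounded defect applied to $x,z,y$ yields
\[
q(x,y) = q(x,z) + q(z,y) + r_{x,z,y}, \qquad \norm{r_{x,z,y}}_1 \le \delta,
\]
so $|q(x,y)_e| \le |q(x,z)_e| + |q(z,y)_e| + \delta$. Iterating this halving reduces to the base case $d(x',y') = O(\delta)$, where the $L^1$-bound of (B2) gives $|q(x',y')_e|\le\norm{q(x',y')}_1 \le \delta\cdot d(x',y') = O(\delta^2)$ directly.

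The main obstacle is controlling the error accumulation across the halvings: a naive depth-$k$ recursion would produce $2^k$ subchains with $O(2^k)$ error terms of size $\delta$ each, which blows up exponentially. The resolution uses $\delta$-hyperbolicity of $\comb{X}$: the geodesic $\geod{x,y}$ intersects the $\delta$-ball around $e$ in a single segment of uniformly bounded length, so at each halving step only one of the two sub-geodesics $\geod{x,z}$, $\geod{z,y}$ passes within $\delta$ of $e$. The recursion therefore collapses to a single branch of depth $O(\log d(x,y))$, producing at most $O(\log d(x,y))$ error terms of size $\delta$. A final refinement—using fellow-traveling of hyperbolic geodesics to observe that only the last $O(1)$ halvings can actually place $e$ within $\delta$ of the geodesic of the relevant subchain—absorbs the logarithmic factor and yields the uniform constant $\delta=\delta(X)$. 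The finishing touch for deep horoball edges is to combine this with the peripheral-stabilizer cocompactness above to ensure the constants are truly orbit-independent.
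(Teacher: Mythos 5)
There is a genuine gap in the error control of your halving recursion. At each halving you apply bounded defect \ref{Def: bounded defect}, which changes the coefficient of $e$ by up to $\delta$; this $+\delta$ is incurred at \emph{every} level at which the active subchain's geodesic passes near $e$. But by construction the active subsegment at every level contains (a point uniformly close to) the projection $p$ of $e$ onto $\geod{x,y}$, so $e$ stays within roughly $\delta$ of the active geodesic all the way down the recursion, not just in the last $O(1)$ steps. Your proposed refinement therefore does not hold: nothing kills the per-level defect contribution, and the argument as written only yields $|q(x,y)_e|\le \delta\cdot O(\log d(x,y))+O(\delta^2)$, which is not the uniform bound claimed. (The peripheral-cocompactness discussion is also a non sequitur: cocompactness on each fixed-depth slice cannot give uniformity over the infinitely many depths; fortunately it is not needed, since the constants in \ref{Def: quasi-geodesic} and \ref{Def: bounded defect} already depend only on $X$.)

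The fix is to avoid recursion entirely and cut only twice, which is essentially the argument of \cite[Lemma 3.5]{lazarovich2023finite}. Given $e\in\supp q(x,y)$, property \ref{Def: quasi-geodesic} gives a point $p\in[x,y]$ with $d(e,p)\le\delta$. Choose $u,v\in[x,y]$ on either side of $p$ with $d(u,p),d(v,p)$ equal to a fixed constant $C=C(\delta)$ large enough (taking $u=x$ or $v=y$ if $p$ is that close to an endpoint) so that $e\notin\calN_\delta([x,u])$ and $e\notin\calN_\delta([v,y])$; then by \ref{Def: quasi-geodesic} the coefficient of $e$ in $q(x,u)$ and in $q(v,y)$ is exactly $0$. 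Two applications of bounded defect give
\[
|q(x,y)_e|\;\le\;|q(x,u)_e|+|q(u,v)_e|+|q(v,y)_e|+2\delta\;=\;|q(u,v)_e|+2\delta\;\le\;\norm{q(u,v)}_1+2\delta\;\le\;\delta\cdot d(u,v)+2\delta,
\]
and $d(u,v)\le 2C$ is bounded in terms of $\delta(X)$ alone, giving the uniform bound with no logarithmic loss. Your base case already contains the last inequality; the missing idea is that the two long pieces should be chosen so that $e$ lies \emph{outside} their supports, so that only a bounded number of defect errors ever appear.
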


\begin{lemma}[{\cite[Lemma 3.6]{lazarovich2023finite}}]\label{lem : q-unbound} There exists $\rho$ such that for all $x\neq y\in X^0$ and for all $z\in[x,y]$, the restriction $a_z=q(x,y)|_{\calN_\rho(z)}$ of $q(x,y)$ to the edges in the ball of radius $\rho$ around $z$ satisfies $\norm{a_z}_1\geq1$. 
\end{lemma}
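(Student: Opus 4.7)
The plan is to use the duality $\langle f,\partial a\rangle=\langle \delta f,a\rangle$ between $0$-cochains and $1$-chains, choosing a $0$-cochain $f$ that pairs to $\pm 1$ against $\partial q(x,y)=y-x$ while having coboundary supported near $z$. Geometrically, $f$ will be the indicator of a ``half-space'' whose boundary passes through $z$; since $q(x,y)$ must cross this boundary with net signed flux $1$, its total $\ell^1$-mass on nearby edges of $\supp q(x,y)$ must be at least $1$.

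After reducing to the case where $z$ is a vertex of $\comb{X}$ lying strictly between $x$ and $y$ on $[x,y]$ (the boundary cases are handled by replacing $z$ with a nearby interior vertex on $[x,y]$, at the cost of a uniform additive constant in $\rho$), set $r=d(z,y)$ and
\[
A := \{\,v\in\comb{X}^{(0)} : d(v,y)\leq r\,\}, \qquad f := \chi_A \in C^0(\comb{X};\bbQ).
\]
Then $f(y)=1$ and $f(x)=0$, so
\[
1 = f(y)-f(x) = \langle f,\partial q(x,y)\rangle = \langle \delta f,\, q(x,y)\rangle.
\]
Since $\delta f$ is $\{-1,0,1\}$-valued and supported exactly on edges straddling $\partial A$, the triangle inequality yields $1\leq\sum_{e\text{ crosses }\partial A}|q(x,y)_e|$.

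The heart of the argument is the claim that every edge $e\in\supp q(x,y)$ crossing $\partial A$ lies in $\calN_{2\delta+1}(z)$, where $\delta$ is the quasi-geodesic constant from \ref{Def: quasi-geodesic}; one then takes $\rho:=2\delta+1$ (up to the constant from the reduction). For such an edge with endpoints $u\in A$, $v\notin A$, integrality of distances combined with $d(u,y)\leq r$, $d(v,y)\geq r+1$, and $|d(u,y)-d(v,y)|\leq 1$ forces $d(u,y)=r$. By $\supp q(x,y)\subseteq\calN_\delta([x,y])$, pick $w\in[x,y]$ with $d(u,w)\leq\delta$; then $|d(w,y)-r|\leq\delta$. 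As $w$ and $z$ both lie on the geodesic $[x,y]$, the restriction of $d(\cdot,y)$ to $[x,y]$ is an isometry, giving $d(w,z)=|d(w,y)-d(z,y)|\leq\delta$. The triangle inequality then yields $d(e,z)\leq 1+\delta+\delta=2\delta+1$, as claimed.

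The step I expect to require the most care is the boundary-case reduction: when $z$ is near an endpoint or in the interior of an edge, one must verify that a nearby interior vertex on $[x,y]$ can always be chosen with $\rho$ inflated only by a uniform additive constant. The core estimate above is fairly clean — essentially integration by parts meeting the support condition of the bicombing — and notably does not invoke $\delta$-hyperbolicity of $\comb{X}$ beyond what is already packaged into the quasi-geodesic axiom of the bicombing.
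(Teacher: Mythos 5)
Your proof is correct, and it is essentially the same cut-and-flux argument by which this lemma is established in the cited source \cite{lazarovich2023finite} (the present paper does not reprove it): pairing $q(x,y)$ against the coboundary of the indicator of $\{d(\cdot,y)\le d(z,y)\}$ forces unit $\ell^1$-mass on the edges crossing the cut, and the support axiom places those edges near $z$. One small repair: since $[x,y]$ denotes the union of \emph{all} geodesics from $x$ to $y$, the restriction of $d(\cdot,y)$ to $[x,y]$ is not an isometry, so you should pick $w$ on the particular geodesic containing $z$ --- which is legitimate because \ref{Def: quasi-geodesic} gives $\supp q(x,y)\subseteq\calN_\delta(\geod{x,y})$ for every choice of geodesic $\geod{x,y}$ (alternatively, $\delta$-thinness lets you pass between two geodesics with the same endpoints at the cost of an extra $\delta$ in $\rho$, so your claim that hyperbolicity is never used is only true under the first reading). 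With that adjustment $d(w,z)=|d(w,y)-d(z,y)|\le\delta$ and the rest goes through, including the degenerate case $z=x$; note that when $d(x,y)=1$ there is no vertex strictly between $x$ and $y$, but taking $z'=y$ works just as well, so the uniform additive constant you allow for suffices.
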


In \Cref{Section: Resolution,Section: Bounding the total weight},
we will restrict our claims to a cusped space with vertices at depth at most $R$, denoted $\comb{X}_{\le R}$. Note that $G$ acts cocompactly on $\comb{X}_{\leq R}$. In this case, we have the following result

\begin{corollary}[{\cite[Corollary 3.7]{lazarovich2023finite}}]\label{cor : <1/lambda} There exists $\rho=\rho(X)$ such that for all $R\ge 0$ there exists $\lambda=\lambda(X,R)$,  such that for all $x\neq y\in X^0$ and for all $z\in[x,y]\cap \comb{X}_{\le R}$, there exists an edge in $B_\rho(z)$ whose coefficient in $q(x,y)$ is at least $1/\lambda$.
\end{corollary}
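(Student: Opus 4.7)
The plan is to combine Lemma 3.6 directly with a cocompactness argument. The whole point of cutting off at depth $R$ is that on $\comb{X}$ itself the valences blow up as one descends into the horoballs (the valence at depth $m$ is roughly $2^{m+1}+2$), whereas on $\comb{X}_{\le R}$ the $G$-action is cocompact, so the local combinatorics are uniformly bounded. Lemma 3.6 gives the lower bound on the total $\ell^1$-mass of $q(x,y)$ near $z$, and all that remains is an elementary pigeonhole on a uniformly bounded number of edges.

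First, I would take $\rho = \rho(X)$ to be the constant produced by Lemma 3.6. Fix $x\ne y\in X^0$ and $z\in[x,y]\cap \comb{X}_{\le R}$, and set $a_z := q(x,y)|_{\calN_\rho(z)}$; Lemma 3.6 gives $\norm{a_z}_1 \ge 1$, and by the standing convention all coefficients of $q(x,y)$ are nonnegative.

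Next I would bound $\#\calN_\rho(z)$. Since any edge in $\calN_\rho(z)$ is within graph-distance $\rho$ of $z$, and since travelling one edge in $\comb{X}$ changes depth by at most $1$, every such edge lies in $\comb{X}_{\le R+\rho}$. The subgraph $\comb{X}_{\le R+\rho}$ is $G$-invariant and carries a cocompact $G$-action: $G$ acts cocompactly on $X = \comb{X}_{\le 0}$, and at each depth $0<m\le R+\rho$ the horoball layer contributes exactly one further $G$-orbit of vertices per $G$-orbit in $X^0$, so in total finitely many orbits. Because $\comb{X}$ is locally finite, the maximal valence $V$ on $\comb{X}_{\le R+\rho}$ is finite and depends only on $X$ and $R$. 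Hence $\#\calN_\rho(z) \le N(X,R)$ for some uniform constant $N(X,R)$.

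Finally I would apply pigeonhole: $\norm{a_z}_1$ is a sum of at most $N(X,R)$ nonnegative numbers which is $\ge 1$, so at least one of them is $\ge 1/N(X,R)$; the corresponding edge lies in $B_\rho(z)$ and has coefficient at least $1/\lambda$ in $q(x,y)$ with $\lambda(X,R) := N(X,R)$. There is no real obstacle here beyond the bookkeeping; the point of the corollary is simply that the unbounded-valence obstruction vanishes once one restricts the position of $z$ to depth $\le R$, and Lemma 3.6 was already set up to supply the $\ell^1$-lower bound needed to feed the pigeonhole.
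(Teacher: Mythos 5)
Your proof is correct and is essentially the paper's own argument: invoke \Cref{lem : q-unbound} for the $\ell^1$ lower bound on $q(x,y)$ near $z$, use cocompactness of the $G$-action on the truncated cusped space plus local finiteness to uniformly bound the number of edges in a $\rho$-ball, and conclude by pigeonhole. If anything you are slightly more careful than the paper, which cites cocompactness on $\comb{X}_{\le R}$ while the relevant edges can reach depth $R+\rho$; your passage to $\comb{X}_{\le R+\rho}$ handles this cleanly (the aside that each horoball layer adds exactly one orbit per $G$-orbit of $X^0$ is not literally accurate, but finitely many orbits per layer is all that is needed).
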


\begin{proof}
By \Cref{lem : q-unbound}, there exists some $\rho=\rho(X)$ such that  $\norm{q(x,y)|_{\calN_\rho(z)}}_1\geq 1$. Since $\comb{X}$ is locally finite, balls have finitely many edges. Now, $G$ acts cocompactly on $\comb{X}_{\leq R}$, so there is a maximal number of edges in a ball of radius $\rho$.
\end{proof}

\subsection{Weighted patterns}
Lazarovich \cite{lazarovich2023finite} introduced the notion of a \emph{weighted singular pattern}. Starting from an action of a group $G$ on a hyperbolic space supporting a globally stable bicombing, he demonstrated how one can define a weighted pattern on a $2$-dimensional simplicial complex $K$, with $\pi_1(\bar{K})=G$.

\begin{definition}[{\cite[Definition 4.1]{lazarovich2023finite}}]
    Let $K$ be a $2$-dimensional simplicial complex. A \emph{singular pattern} $\calF$ on $K$ is an immersed graph $\calF\immers K$ such that:
    \begin{enumerate}
        \item The vertices of $\calF$ are called \emph{connectors} and they are sent injectively to the interiors of edges and $2$-simplices of $K$. They are called \emph{regular} or \emph{singular} accordingly. See \Cref{fig: singular pattern}.
        \item The edges of $\calF$ are called \emph{segments}, and their interiors are sent to straight line segments in the interior of $2$-simplices of $K$. Segments that connect regular connectors are called \emph{regular}, while other segments are called \emph{singular}. Every singular segment has a regular connector as one of its endpoints, and it does not meet any other segment in the interior of the $2$-simplex in which it is contained.
        \item There are finitely many connectors and segments in each simplex.
        \item Each connector $c$ is the endpoint of exactly one segment in each $2$-simplex containing $c$ in its closure.
    \end{enumerate}
    Connected components of $\calF$ are called \emph{tracks}. A \emph{regular pattern} is a pattern without singular connectors. 
\end{definition}

\begin{figure}
    \centering
    \includegraphics{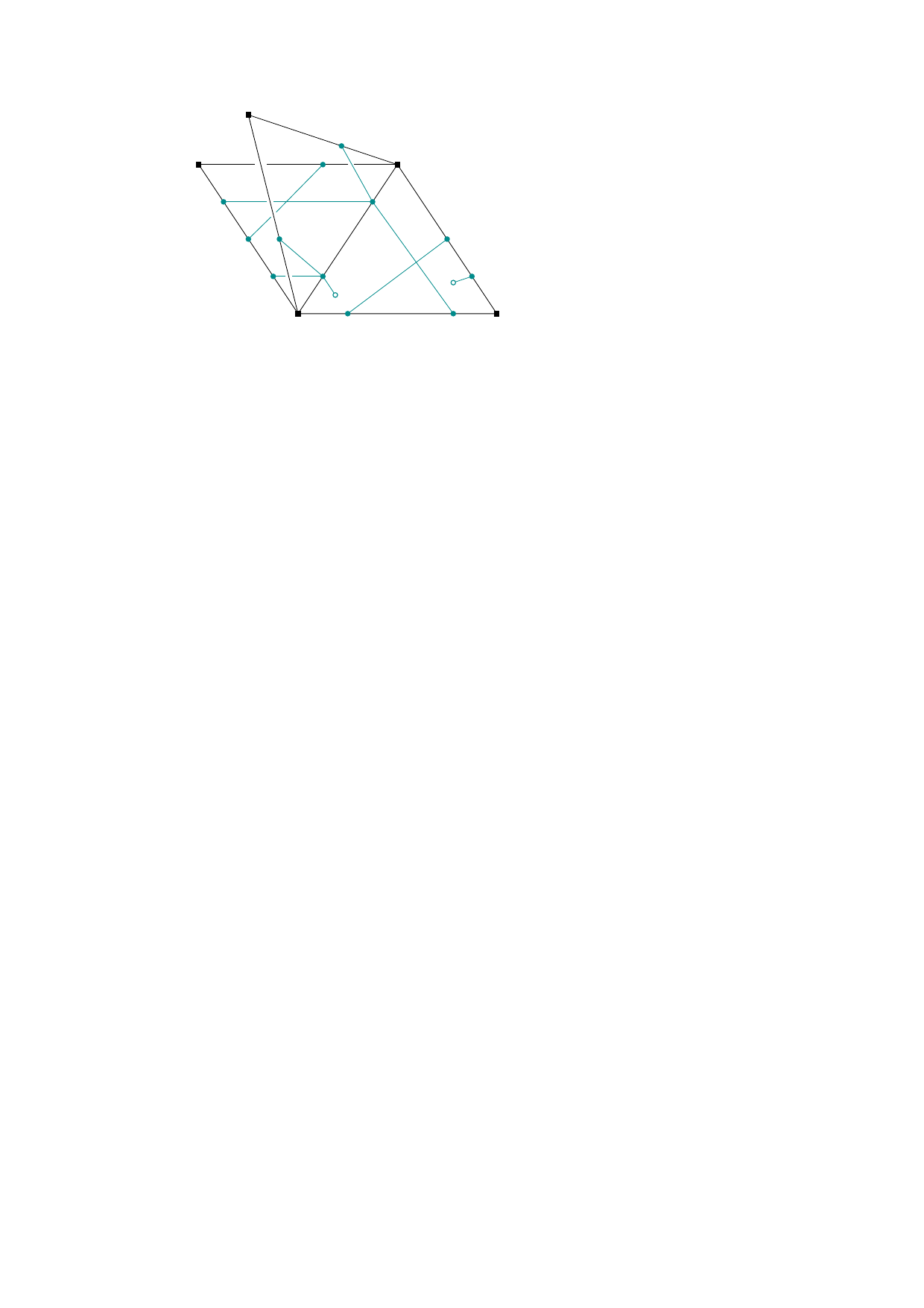}
    \caption{A singular pattern (in cyan) on a 2-complex (in black) consisting of three 2-simplices with a common edge. The regular connectors are shown by full circles, and singular connectors by empty circles.}
    \label{fig: singular pattern}
\end{figure}

\begin{definition}[{\cite[Definition 4.2]{lazarovich2023finite}}] A \emph{weighted singular pattern} on a finite simplicial complex $K$ is a pair $(\calF,\bfw)$ where $\calF$ is a singular pattern, and $\bfw$ is a function from the set of connectors to $\bbR_{\ge0}$ such that the weight of a singular connector $\bfc$ is $\bfw(\bfc)=0$.
Let us also define:
\begin{enumerate}
    \item The weight of a track $\bft$ is the maximal weight of its connectors $\bfw(\bft):=\max_{\bfc\in \bft} \bfw(\bfc)$.
    \item The total weight of $\calF$ is the sum of weights of its tracks $\bfw(\calF)=\sum_{t\subseteq\calF}\bfw(\bft)$.
    \item The \emph{defect} of a segment in a weighted singular pattern is $\df_\bfw(\bfs)=\abs{\bfw(\bfc_1)-\bfw(\bfc_2)}$ where $\bfc_1$, $\bfc_2$ are the endpoints of $\bfs$.
    \item The defect of a weighted pattern $(\bfw,\calF)$ is the sum of defects of its segments, $\df_\bfw(\calF)=\sum_{\bfs\subseteq\calF}\df_\bfw(\bfs)$. 
    \item A weighted singular pattern is \emph{perfect} if it is regular and $\df(\calF)=0$. 
\end{enumerate}
\end{definition}

\begin{lemma}[{\cite[Lemma 4.3]{lazarovich2023finite}}] \label{Lem: perfect resolution}
Let $(\calF,\bfw)$ be a weighted pattern on a compact simplicial complex $K$. Then there is a perfect weighted pattern $(\calF',\bfw')$ such that $\calF'\subseteq\calF$, $\bfw'\leq\bfw$ and $\bfw(\calF)\leq\bfw'(\calF')+\df_\bfw(\calF)$. 
\end{lemma}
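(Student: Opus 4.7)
The plan is to first strip $\calF$ of all its singular data to get a regular subpattern, and then flatten the weight on each resulting connected component so that the pattern becomes perfect. Concretely, I would let $\calF'\subseteq\calF$ be obtained by deleting every singular connector and every singular segment of $\calF$. Since a singular segment is, by definition, one with a singular endpoint, what remains has only regular connectors and regular segments, so $\calF'$ is regular. On each track (connected component) $t'$ of $\calF'$, define
\[
\bfw'(\bfc):=m(t'):=\min_{\bfc'\in t'}\bfw(\bfc')\qquad\text{for all }\bfc\in t'.
\]
This is constant on each track of $\calF'$, so every segment of $\calF'$ has zero defect with respect to $\bfw'$, and $(\calF',\bfw')$ is therefore perfect. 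Also $\bfw'\le\bfw$ holds by construction.

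The heart of the argument is to compare $\bfw(\calF)$ to $\bfw'(\calF')$ track by track in the original pattern. Fix a track $t$ of $\calF$, let $M=\bfw(t)$, and pick a connector $\bfc^*\in t$ realising this maximum. If $M=0$ there is nothing to show for this track, so assume $M>0$; then $\bfc^*$ must be regular, hence lies in some track $t'_{j^*}$ of $\calF'$. Let $\bfc^{**}\in t'_{j^*}$ realise $m(t'_{j^*})$, and choose a path in $t'_{j^*}$ from $\bfc^*$ to $\bfc^{**}$. Telescoping the weights along this path and using the triangle inequality gives
\[
M-m(t'_{j^*})=\bfw(\bfc^*)-\bfw(\bfc^{**})\le\sum_{\bfs\text{ on the path}}\df_\bfw(\bfs)\le\sum_{\bfs\subseteq t}\df_\bfw(\bfs)=\df_\bfw(t).
\]
Since the remaining tracks $t'_j\subseteq t$ of $\calF'$ contribute nonnegative terms $\bfw'(t'_j)=m(t'_j)\ge 0$, this yields
\[
\bfw(t)=M\le\sum_{t'_j\subseteq t}\bfw'(t'_j)+\df_\bfw(t).
\]

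Summing this inequality over all tracks $t$ of $\calF$, and noting that $\{t'_j:t'_j\subseteq t,\ t\subseteq\calF\}$ is exactly the set of tracks of $\calF'$ while every segment of $\calF$ lies in exactly one track of $\calF$, gives
\[
\bfw(\calF)=\sum_t\bfw(t)\le\sum_{t'}\bfw'(t')+\sum_t\df_\bfw(t)=\bfw'(\calF')+\df_\bfw(\calF),
\]
as required. The only subtlety I expect is bookkeeping: I need to be careful that $\calF'$, with some regular connectors potentially having lost the unique segment going into a neighbouring $2$-simplex (because that segment was singular), still qualifies as a subpattern in the sense required by the definition. If the definition is interpreted strictly, this is resolved by also deleting any regular connector that has no surviving segment into some adjacent $2$-simplex — an operation which only further removes (possibly empty) tracks and so does not affect the inequality, since $\bfw'\ge 0$ and any removed track contributes $0$ to the right-hand side while being absorbed into $\df_\bfw(\calF)$ on the left-hand side by the same telescoping argument applied to the path it lies on within its original track.
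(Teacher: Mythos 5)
Your proposal is essentially correct, and it is worth noting that the paper itself gives no proof of this lemma: it is imported verbatim from Lemma 4.3 of \cite{lazarovich2023finite}. Your core mechanism --- pass to a regular subpattern of $\calF$, make the new weight constant on each surviving track equal to the minimum of $\bfw$ there (so the defect vanishes and $\bfw'\le\bfw$), and telescope $\bfw$ along a path inside a track to bound the lost weight by the defect --- is sound and in the same spirit as the cited argument.

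The one place to tighten is your closing remark about condition (4) in the definition of a pattern. Deleting a regular connector that has lost its segment in some adjacent $2$-simplex does not stop there: its remaining segments are deleted with it, so its neighbours now violate condition (4) in turn, and the deletion propagates through the entire connected component of the regular part. The clean formulation is therefore to take $\calF'$ to be the union of those tracks of $\calF$ containing no singular connector; such a track consists only of regular connectors and segments and, being a full connected component of $\calF$, inherits condition (4), while every component of the regular part meeting a (formerly) singular segment is discarded wholesale. Your absorption claim then needs one explicit observation: a discarded component lies in a track $t$ of $\calF$ that contains a singular connector, whose weight is $0$ by definition, so telescoping from a maximal-weight connector of $t$ down to that singular connector gives $\bfw(t)\le\df_\bfw(t)$; hence the entire weight of such a track --- not merely the removed piece --- is absorbed by the defect, and nothing from it is needed on the right-hand side. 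With that wording the bookkeeping closes and the inequality $\bfw(\calF)\le\bfw'(\calF')+\df_\bfw(\calF)$ follows exactly as you sum it over tracks.
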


\section{Resolution of globally stable bicombings} \label{Section: Resolution}

Let $(G,\calP)$ be a relatively hyperbolic group pair and let $K$ be a simply-connected 2-dimensional simplicial complex with a free and cocompact simplicial $G$-action.

Let $X$ be a graph with $\cusped{X}=\comb{X}$ a $\delta$-hyperbolic graph quasi isometric to the combinatorial cusped space (on which $G$ acts isometrically and properly).
Following \Cref{rem: bicomb}, we fix some globally stable bicombing $q$ on $\comb{X}$. 
Let $\Phi:K^0\to X^0$ be a $G$-equivariant map.
We define a map $K^1\to C_1(\cusped{X})$ via the bicombing $q$, i.e for $e\in K^1$, $q(e)=q(\Phi(e_-),\Phi(e_+))$.

 \begin{definition}[{\cite[Definition 5.1]{lazarovich2023finite}}]\label{Def: Resolution}
     A \emph{resolution} of $q$ to $K$ via $\Phi$ is a $G$-invariant weighted singular pattern $(\calF,\bfw)$ on K together with a bijection for every edge $e\in K^1$ $$\bfc(e,\cdot):\supp q(e) \to \set{\text{regular connectors of } \calF \text{ on } e}$$
     such that the following holds for every edge $e\in K^1$:
     \begin{enumerate}[label=(R\arabic*), ref=(R\arabic*)]
    \item\label{R-symmetry}[symmetry] $\bfc(-e,-f)=\bfc(e,f)$ for all $f\in \supp q(e)$.
    \item\label{R-G-equivaraint}[$G$-equivariance] $g\cdot \bfc(e,f)=\bfc(g.e,g.f)$ for all $g\in G$ and $f\in\supp q(e)$. 
    \item\label{R-quasi-ordered}[quasi-ordered] for all $f,f'\in\supp q(e)$, if there exists $\delta=\delta(X)$ s.t $$d(\Phi(e_-),f)<d(\Phi(e_-),f')-\delta$$ then when considering $e$ as an interval ordered from $e_-$ to $e_+$ we have $\bfc(e,f)<\bfc(e,f')$ on $e$.  
    \item\label{R-weight} For all $f\in\supp q(e)$ the weight $\bfw(c(e,f))$ is the coefficient of $f$ in $q(e)$.
    \item\label{R-2-simplex} For every $2$-simplex $\Delta\in K^2$, two regular connectors $\bfc$, $\bfc'$ on the boundary of $\Delta$ are connected by a regular segment if and only if we can write $\bfc=\bfc(e,f),\, \bfc'=\bfc(e',-f)$ such that $e,\, e',\, e''$ are the edges of $\partial\Delta$, oriented such that $\partial\Delta=e+e'+e''$, the edge $f$  belongs to $\supp q(e)$, $-f$ belongs to $\supp q(e')$, and neither of $\pm f$ belongs to $\supp q(e'')$. 
 \end{enumerate}    
 \end{definition}

\begin{lemma}
     For every $G, \, K, \,\Phi,\, X,\, q$ as above, there exists a resolution of $K$ via $\Phi$.
\end{lemma}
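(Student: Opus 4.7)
The plan is to construct the pair $(\calF,\bfw)$ by a direct combinatorial construction in two stages — first placing regular connectors on edges, then drawing segments inside each $2$-simplex — and then to verify (R1)--(R5) and the singular-pattern axioms one by one. In Stage~1, by $G$-equivariance and the symmetry (R1), it will suffice to define $\bfc(e,\cdot)$ on a set of representatives of the action of $G$ on oriented edges of $K$ modulo orientation reversal. For each such representative $e$, the set $\supp q(e)$ is finite, so I would place one regular connector at a distinct interior point of $e$ for each $f\in\supp q(e)$, choosing their linear order along $e$ to refine the strict partial order $f\prec f'\iff d(\Phi(e_-),f)<d(\Phi(e_-),f')-\delta$ (any linear extension of a strict partial order exists). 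I would set $\bfw(\bfc(e,f))$ equal to the coefficient of $f$ in $q(e)$, and then extend to all oriented edges by $\bfc(g\cdot e, g\cdot f)=g\cdot\bfc(e,f)$ and to reversed orientations by $\bfc(-e,-f)=\bfc(e,f)$ (the same geometric point), using antisymmetry of $q$ to identify $\supp q(-e)=-\supp q(e)$; the two extensions are compatible because the $G$-action commutes with orientation reversal.

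In Stage~2, I would pick a $G$-orbit representative of each $2$-simplex $\Delta\in K^2$ with cyclically oriented boundary $\partial\Delta=e+e'+e''$, and for every pair $(\tilde e, f)$ with $\tilde e\in\{e,e',e''\}$ and $f\in\supp q(\tilde e)$ run the test built into (R5): if there is a unique other boundary edge $e^*$ with $-f\in\supp q(e^*)$ and $\pm f\notin\supp q(e^{**})$ for the remaining edge $e^{**}$, then I draw a regular straight-line segment from $\bfc(\tilde e,f)$ to $\bfc(e^*,-f)$. For every regular connector on $\partial\Delta$ that remains unpaired after this sweep, I would place a singular connector of weight zero in the interior of $\Delta$ near it and join them by a short singular straight-line segment, chosen so as not to meet any other segment; finally, I propagate all of this data throughout $K$ by $G$-equivariance.

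Verification of (R1)--(R4) will be immediate from Stage~1, and (R5) from Stage~2. The axioms of a singular pattern also follow directly: finiteness holds because $\supp q(e)$ is finite; every regular connector on $\partial\Delta$ is the endpoint of exactly one segment in $\Delta$ (regular if paired, singular otherwise); singular connectors lie in the interior of $2$-simplices; and singular segments avoid other segments by the short-length choice. The one point that needs care — and the only place the construction could in principle fail — is checking the reciprocity of the Stage~2 pairing: if $(\tilde e, f)$ triggers a regular segment to $(e^*, -f)$, then $(e^*, -f)$ must trigger the same segment back. The hard part, such as it is, amounts to observing that the (R5) hypothesis on the triple of boundary edges is invariant under swapping the two paired edges (and correspondingly flipping $f\leftrightarrow -f$), so the construction is well-defined and no deeper obstacle arises.
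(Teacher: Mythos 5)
Your construction is correct and is essentially the paper's argument: the paper simply chooses $G$-orbit representatives of vertices, edges and $2$-simplices (using cocompactness of $G\actson K$), defines the connectors, weights and segments exactly as in \cite[Lemma 5.3]{lazarovich2023finite} from the properties of $q$, and extends $G$-equivariantly, which is what you spell out explicitly. The only point worth noting is that in verifying \ref{R-quasi-ordered} for the reversed orientation $-e$ you should observe that, by quasigeodesicity of $q$ and hyperbolicity, comparability from $e_+$ with a sufficiently large threshold forces comparability from $e_-$, so your linear extension still satisfies the condition with a constant depending only on $X$ --- a one-line check, not a gap.
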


\begin{proof}
We are in a situation similar to \cite[Lemma 5.3]{lazarovich2023finite}, with the exception that we do not necessarily have a cocompact action $G\actson \cusped{X}$.
Yet, it is sufficient for this construction that $G\actson K$ is cocompact.
Thus, we can start by choosing orbit representatives of vertices, edges and 2-simplices in $K$, defining the map $\bfc(e,\cdot)$ and the connectors and segments just as in \cite[Lemma 5.3]{lazarovich2023finite}, relying only on the properties of the bicombing $q$.
Extending $\bfc(e,\cdot)$ G-equivariantly to all $\sigma\in K^2$ we obtain a resolution of $q$ to $K$ via $\Phi$. 
\end{proof}
\begin{remark}
    Note that in our case, given $e\in K^1$, we may have connectors $\bfc(e,f)$ on $e$ for which $f$ is in $\cusped{X}\setminus X$.
\end{remark}

\begin{lemma}[{following \cite[lemma 5.5]{lazarovich2023finite}}]\label{lem: T1-T4}
Let $(\calF,\bfw)$ be a resolution of $q$ to $K$ via $\Phi$, then the following holds
\begin{enumerate}[label=(T\arabic*), ref=(T\arabic*)]
\item\label{T1} Each track of $\calF$ is embedded and meets each edge of $K$ at most once
\item\label{T2} Each track corresponds to a unique unoriented edge $f$ in $\cusped{X}$.
\item\label{T3} Each track is compact and its stabilizer is finite.
\item\label{T4} If two tracks $\bft$, $\bft'$ in $\calF$ intersect then there exists some $\delta=\delta(X)$ such that the corresponding edges $f,\, f'$ in $\cusped{X}$ satisfy $$d(f,f')\leq \delta.$$
\end{enumerate}
\end{lemma}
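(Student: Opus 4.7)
The plan is to follow closely the proof of \cite[Lemma 5.5]{lazarovich2023finite}, adapting it to our relatively hyperbolic setting where $G$ acts freely and cocompactly on $K$ but only properly (not cocompactly) on $\cusped X$. The key inputs will be the bijection $\bfc(e,\cdot)\colon \supp q(e)\to\{\text{regular connectors on }e\}$ in \Cref{Def: Resolution}, the quasi-geodesicity \ref{Def: quasi-geodesic} and bounded defect \ref{Def: bounded defect} of $q$, hyperbolicity of $\cusped X$, and the observation that $G$ acts freely on vertices and unoriented edges of $\cusped X$ (using torsion-freeness of $G$ and the fact that $X$ is a Cayley graph, properties that are preserved by the horoball construction).

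For \ref{T2} I would assign to each regular segment the canonical unoriented label $f\in\cusped X^1$ prescribed by \ref{R-2-simplex}: the unique unoriented edge such that $\pm f$ appear in the supports of two boundary chains of the $2$-simplex and are absent from the third. Concatenation along regular connectors extends the label to the regular part of each track. Singular segments terminate at degree-$1$ singular connectors in $2$-simplex interiors --- this follows from condition (4) of the pattern definition together with the no-meeting clause for singular segments --- and each such pendant inherits the label of the unique regular connector at its other end. For \ref{T1}, the defining bijection shows that at most one regular connector on any edge $e\in K^1$ can carry the label $f$, giving embeddedness along the $1$-skeleton; embeddedness inside a $2$-simplex follows from the same uniqueness for boundary connectors combined with the no-meeting clause.

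For \ref{T3} I would first bound the number of regular connectors in a given track. Fixing an unoriented $f\in\cusped X^1$, by \ref{Def: quasi-geodesic} an edge $e\in K^1$ carrying a regular connector labeled $f$ satisfies $d(\Phi(e_-),f)\le\delta + L$, where $L=\max_{e\in K^1}d(\Phi(e_-),\Phi(e_+))$ is finite by cocompactness of $G\actson K$. The ball $B(f,\delta+L)$ in $\cusped X$ is finite by local finiteness; and for each vertex $x$ in it, $|\Phi^{-1}(x)|$ is bounded by the number of $G$-orbits in $K^0$, using freeness of $G\actson \cusped X^0$ together with $G$-equivariance of $\Phi$. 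Local finiteness of $K$ bounds the incident edges, so the track has finitely many regular connectors; each adjacent $2$-simplex contributes at most one singular connector, so the track is compact. Finally, any element stabilizing the track preserves its unoriented label $\{f,-f\}$, and freeness of $G\actson \cusped X^0$ together with torsion-freeness of $G$ forces this stabilizer to be trivial.

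The main obstacle I expect is \ref{T4}. Two distinct tracks cannot share a regular connector (by the bijection $\bfc(e,\cdot)$) or a singular connector (which has degree $1$), so any intersection is a genuine crossing of two regular segments inside a single $2$-simplex $\Delta$ with boundary $e_1+e_2+e_3$. The bicombing chains $q(e_i)$ trace a quasi-geodesic triangle in the $\delta$-hyperbolic space $\cusped X$, with defect controlled by \ref{Def: bounded defect} and with vertices $\Phi(v_1),\Phi(v_2),\Phi(v_3)$ of pairwise distance at most $L$. Both labels $f,f'$ of the two tracks lie $\delta$-close to sides of this triangle, and the combinatorial crossing, read through the quasi-ordering \ref{R-quasi-ordered} of connectors on each boundary edge, forces $f$ and $f'$ to be linked along at least two of the sides. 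A slim-triangles argument in $\cusped X$ then places both $f$ and $f'$ within a uniform neighborhood of the $\delta$-center of the triangle, yielding $d(f,f')\le\delta'$. The delicate point is to extract the linking from \ref{R-quasi-ordered} robustly despite $q$ being only a quasi-geodesic chain, so that the ordering of connectors on an edge reflects distances only up to the additive constant $\delta$; I would handle this by a short case analysis on which pair of boundary edges each crossing segment joins, using the $\ell^1$-bound $\norm{q(e)}_1\le\delta\cdot d(\Phi(e_-),\Phi(e_+))$ to control the positions of $f$ and $f'$ in each case.
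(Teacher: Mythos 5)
Your proposal is correct and, in substance, follows the same route as the paper: the paper's own proof observes that \ref{T1}--\ref{T2} come directly from the resolution and bicombing axioms, defers \ref{T4} to \cite[Lemma 5.5]{lazarovich2023finite} on the grounds that it only uses those axioms (which is exactly the argument you reconstruct), and only supplies a new argument for \ref{T3}, the one place where the loss of cocompactness of $G \actson \cusped{X}$ matters. The genuine divergence is your compactness argument in \ref{T3}: the paper argues purely combinatorially --- there are finitely many $G$-orbits of connectors (cocompactness of $G\actson K$), the connector-to-edge map is $G$-equivariant, and $G$ acts freely on $\cusped{X}$ (properness plus torsion-freeness), so a track contains at most one connector from each orbit --- whereas you count metrically, via quasigeodesicity \ref{Def: quasi-geodesic}, the finite displacement bound $L$, and local finiteness of $\cusped{X}$ and of $K$. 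Both arguments are valid; the paper's is shorter and needs no local finiteness or metric input, while yours makes the dependence on the displacement explicit. One small correction to your \ref{T4} sketch: when the two crossing segments join the same pair of sides of the $2$-simplex, $f$ and $f'$ need not lie near the center of the quasigeodesic triangle; in that configuration the quasi-ordering \ref{R-quasi-ordered}, applied on both shared sides, together with the fact that both edges lie $\delta$-close to the common side, only forces $f$ and $f'$ to sit at comparable distances along that side --- which still gives $d(f,f')\le \delta(X)$, so the case analysis you propose does go through, but the ``$\delta$-center'' claim should be dropped in that case (it is only the mixed case, where the segments join different pairs of sides, that pushes both edges near the internal points of the triangle). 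The passing suggestion to use the $\ell^1$-bound $\norm{q(e)}_1\le\delta\cdot d(\Phi(e_-),\Phi(e_+))$ is not really needed here; the quasi-order, quasigeodesicity, and thinness suffice.
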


We follow the outline of the proof provided in \cite{lazarovich2023finite}, briefly explaining why it readily carries over to our slightly different setting.

\begin{proof}
Properties \ref{T1}-\ref{T2} holds just by \Cref{Def: Globally stable bicombing GSB}.
Property \ref{T3} holds: as the map that sends a track to its corresponding unoriented edge in $\cusped{X}$ is $G$-equivariant, and the action $G\actson \cusped{X}$ is free.
Thus, the stabilizer of each track is trivial.
To show that each track is compact, notice first that since the action $G\actson K$ is cocompact, the action of $G$ on the set of connectors in $\calF$ has finitely many orbits.
Furthermore, the map that assigns to each connector in $\calF $ a corresponding unoriented edge in $\cusped{X}$ is $G$-equivariant.
It follows that each track has finitely many connectors, and so is compact. 

The proof of property \ref{T4} is done as in \cite{lazarovich2023finite}, and holds just the same as it requires only the properties of the bicombing and the definition of the resolution.
\end{proof}

From here on, we will work with resolutions for which the regular and singular connectors originate from edges in $\supp q(e)\subseteq \cusped{X}$ of depth at most $R$, where $R$ is some arbitrary constant which will be chosen later to depend only on $X$. 

\begin{notation}
    Let $R\in\bbR$. We denote by $\calF_{\le R}$ the restriction of $\calF$ to the pattern that is obtained by including only connectors induced by edges in $\cusped{X}_{\le R}$.
    As $\calF$ is $G$-invariant, it gives rise to a resolution  $\bar\calF$ on $\bar K=K/G$.
\end{notation}

\section{Bounding the total weight}\label{Section: Bounding the total weight}
As explained in the introduction, the proof of \Cref{Theorem: Main} goes through proving the inequality \eqref{equ: bounded weight}.
In this section, we focus on the right-hand inequality. 

We will adopt the notation $\boldsymbol{\delta}$ introduced in \cite{lazarovich2023finite}: 
\begin{notation}
   We denote by $\boldsymbol{\delta}$ some positive constant that depends only on $X$.
   Given $R\ge 0$, we denote by $\boldsymbol\delta_R$ some positive constant that depends only on $X$ and $R$.
\end{notation}

\begin{proposition}\label{prop;upper bound}
    Let $G$ be a one-ended group, hyperbolic relative to $\calP$, and fix some globally stable bicombing on $\comb{X}$.
    For every simply connected $2$-dimensional simplicial complex $K$ with a free cocompact simplicial action $G\actson K$, there exists a resolution $\bar\calF$ of $q$ to $\bar{K}:=K/G$ satisfying for every $R\in\bbR$ 
    \begin{equation} \label{equ : w<v}
        \bfw (\bar\calF_{\le R})\leq \boldsymbol\delta_R\cdot \vol{\bar{K}}.
    \end{equation}
\end{proposition}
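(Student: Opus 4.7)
The plan is to construct a resolution in the only reasonable way and then bound its weight by a straightforward counting argument, with the key estimate coming from uniform local finiteness of the bounded-depth part of $\comb{X}$. First, I would choose any $G$-equivariant map $\Phi : K^0 \to X^0$, which exists because the $G$-action on $K$ is free and cocompact (pick images of finitely many orbit representatives and extend equivariantly). The preceding lemma then produces a resolution $(\calF,\bfw)$ of $q$ to $K$ via $\Phi$. Since $G$ preserves the depth function on $\comb{X}$, the subpattern $\calF_{\le R}$ is $G$-invariant and descends to the desired $\bar\calF_{\le R}$ on $\bar K = K/G$.

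The total weight would then be estimated via
\[
\bfw(\bar\calF_{\le R}) \;\le\; \bigl(\text{number of tracks of } \bar\calF_{\le R}\bigr) \cdot \max_{\bft} \bfw(\bft).
\]
By \ref{R-weight} the weight of each regular connector is a coefficient of $q$, so Lemma \ref{lem : q-bound} gives $\max_\bft \bfw(\bft) \le \boldsymbol\delta$; singular connectors have weight $0$ by definition. To count tracks, I would combine \ref{T2} (each track corresponds to a unique unoriented edge of $\cusped{X}_{\le R}$) with the fact that every singular segment has a regular endpoint, so that every track contains at least one regular connector. Hence
\[
\#\bigl\{\text{tracks of } \bar\calF_{\le R}\bigr\} \;\le\; \sum_{\bar e \in \bar K^1} \bigl|\supp q(e) \cap \cusped{X}_{\le R}^1\bigr|,
\]
where $e$ is any lift of $\bar e$ (well-defined by $G$-equivariance of $\Phi$ and $q$).

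The main step is then the uniform bound $|\supp q(e) \cap \cusped{X}_{\le R}^1| \le \boldsymbol\delta_R$ for all $e \in K^1$. Cocompactness of $G\actson K$ together with $G$-equivariance of $\Phi$ makes the displacement $d_{\comb{X}}(\Phi(e_-),\Phi(e_+))$ uniformly bounded by some constant $C$ (depending only on the finite set of orbit representatives of edges of $K$). By \ref{Def: quasi-geodesic}, $\supp q(e)$ lies in the $\delta$-neighborhood of a geodesic of length $\le C$. Since the depth function is $1$-Lipschitz, any edge of $\supp q(e)$ lying in $\cusped{X}_{\le R}^1$ is within $\delta$ of a point of this geodesic of depth $\le R+\delta$. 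The subgraph $\cusped{X}_{\le R+\delta}$ is $G$-invariant and $G$-cocompact, because $G$ acts cocompactly on $X$ and each peripheral $P_i$ acts cocompactly on $A_i$ (and hence on each finite vertical slab of $\calH(A_i)$); therefore it is uniformly locally finite, and the $\delta$-neighborhood of a curve of length $\le C$ in it contains only boundedly many edges.

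Putting the pieces together yields $\bfw(\bar\calF_{\le R}) \le \boldsymbol\delta\cdot\boldsymbol\delta_R\cdot|\bar K^1| \le \boldsymbol\delta_R \cdot \vol{\bar K}$ after absorbing $\boldsymbol\delta$ into $\boldsymbol\delta_R$. The conceptual obstacle is that $\comb{X}$ itself is \emph{not} uniformly locally finite — vertices deep in a horoball have unbounded degree — so the restriction to bounded depth is essential; this is precisely the reason why the multiplicative constant is forced to depend on $R$.
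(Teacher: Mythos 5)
There is a genuine gap, and it sits exactly at the step you flag as ``the main step.'' The constant $\boldsymbol\delta_R$ in \Cref{prop;upper bound} is, by the paper's notational convention, allowed to depend only on $X$ and $R$ --- not on $K$ and not on the chosen map $\Phi$. Your bound on $\bigl|\supp q(e)\cap \cusped{X}_{\le R}^1\bigr|$ goes through the displacement $d\bigl(\Phi(e_-),\Phi(e_+)\bigr)$: by \ref{Def: quasi-geodesic} the support lies in a $\delta$-neighborhood of a geodesic of that length, so local finiteness of the bounded-depth part only gives a count proportional to the displacement. You then say the displacement is bounded by a constant ``depending only on the finite set of orbit representatives of edges of $K$,'' but that is a dependence on $K$ (and on $\Phi$), which is precisely what the proposition forbids. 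In the intended application $K$ ranges over classifying spaces of finite-index subgroups $H\leqslant G$: edges of $K$ correspond to generators of $H$, which are long words in $G$, so the displacement (and hence your per-edge support count) grows without bound as the index grows. Your argument therefore only yields $\bfw(\bar\calF_{\le R})\le C(K,\Phi)\cdot\boldsymbol\delta_R\cdot\vol{\bar K}$, which is too weak to give the linear-in-index lower bound needed for \Cref{Theorem: Main}. A telltale sign is that you never use the one-endedness hypothesis, which the statement includes for a reason.

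The paper's proof is structured precisely to remove this dependence on the displacement. One chooses $\Phi$ of \emph{minimal} total displacement among all $G$-equivariant maps $K^0\to X^0$, passes via \Cref{Lem: perfect resolution} to a perfect subpattern (the defect term is controlled per $2$-simplex by the bounded-defect property \ref{Def: bounded defect} plus local finiteness at depth $\le R$, giving $\boldsymbol\delta_R\cdot n$), and then bounds the weight of the perfect pattern crossing each edge by $\boldsymbol\delta_R$ \emph{independently of the displacement}: since $G$ is one-ended, each track separates $K$ into a bounded and an unbounded piece, and pushing vertices onto outermost tracks produces a new equivariant map $\Psi$ whose displacement drops by roughly $\bfw'({\calF'_i}_{\le R})/\boldsymbol\delta_R$ per edge; minimality of $\Phi$ then forces $\sum_i\bfw'({\calF'_i}_{\le R})\le\boldsymbol\delta_R\cdot m$. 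Your counting of tracks via \ref{T2} and the per-track weight bound from \ref{lem : q-bound} are fine as far as they go, but without the minimality/one-endedness mechanism (or a substitute for it) the uniform constant cannot be obtained, so the proposal does not prove the statement.
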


\begin{proof}
    Following \cite[Proposition 6.1]{lazarovich2023finite}, we let $v_1,\dots,v_l,\,e_1,\dots e_m$ and $\Delta_1,\dots , \Delta_n$ be representatives for vertices, edges and 2-simplices respectively, for the action $G\actson K$. Assume that a $G$-equivariant map $\Phi:K^0 \to X^0$ is with a minimal displacement among all such maps.
    That is: 
    \begin{equation}
        \sum_{i=1}^m d(\Phi((e_i)_-),\Phi((e_i)_+))
    \end{equation}
    is minimal.

Denote by $(\calF,\bfw )$ and $(\bar{\calF},\bfw)$
the resolutions of $q$ to $K$
and to $\bar{K}$ via $\Phi$.
By \Cref{Lem: perfect resolution}, there exists a perfect weighted resolution $(\bar{\calF}',\bfw')$ such that $\bar{\calF}'\subseteq\bar{\calF}$, $\bfw'\leq\bfw$ and $$\bfw(\bar\calF_{\le R})\leq\bfw'(\bar\calF_{\le R}')+\df_\bfw(\bar\calF_{\le R})$$

We show that $\bar{\calF}$ satisfies \Cref{equ : w<v} by proving the following claims.
 \begin{claim}
     $\df(\bar\calF_{\le R})\leq\boldsymbol\delta_R\cdot n$.
 \end{claim}
     
     \begin{proof}
         Let $\Delta\in K^2$ with vertices $v,v',v''$ and edges $e,e',e''$ (and $\partial\Delta=e+e'+e''$). Considering the collection of edges in $\cusped{X}_{\leq R}\cap\supp q(e)\cap\supp q(e')\cap\supp q(e'')$, we partition them into two classes.  set
    \begin{equation*} 
         \begin{aligned}
         \calA_\Delta ={} & \{f\in \cusped{X}_{\leq R}^1 : (f\in \supp q(e) \lor -f\in \supp q(e))\\
         & \land(f\in \supp q(e') \lor -f\in \supp q(e')) \\
         & \land(f\in \supp q(e'') \lor -f\in \supp q(e''))\} 
        \end{aligned}
    \end{equation*}
         
    Since $q$ is quasigeodesic \ref{Def: quasi-geodesic}, each $f\in \calA_\Delta$ is at distance $\boldsymbol{\delta}$ from the sides of a geodesic triangle.
    Therefore, the elements of $\calA_\Delta$ are contained in a $\boldsymbol{\delta}$-ball around the center of a geodesic triangle.
    By  the local compactness of $\cusped{x}$, and since we are considering only edges of depth at most $R$, it follows that $\abs{\calA_\Delta}<\boldsymbol\delta_R$.
    By construction, each $f\in \calA_\Delta$ is  associated with three singular segments  in $\Delta$, each of which has a universally bounded defect (\Cref{lem : q-bound}).
    We conclude that the sum of the defects of all segments corresponding to edges in $\calA_\Delta$ is at most $\boldsymbol\delta_R$.
  
    Now considering edges  $f \notin \calA_\Delta$, we have three cases to consider: either $f$ is in exactly one of $\supp q(e)$, $\supp q(e')$ and $\supp q(e'')$, or $f$ is in one of these supports, and $-f$ in another, or $f$ is in exactly two of these supports. 
         In any case, the defect of the segments corresponding to $f$ is equal to the absolute value of the coefficient of $f$ in the sum $q(e)+q(e')+q(e'')$. Hence the sum of defects of all edges in $\Delta$ is bounded by $\norm{q(e)+q(e')+q(e'')}_1$. It remains to recall that $q$ has bounded defect \ref{Def: bounded defect}, i.e. $\norm{q(e)+q(e')+q(e'')}_1\leq\boldsymbol{\delta}$ .
         
         It follows that the sum of the defects of the segments of $\bar\calF_{\leq R}$ in $\bar{\Delta}$ is bounded by $\boldsymbol\delta_R$, and hence $\df(\bar\calF_{\leq R})\leq n\cdot\boldsymbol\delta_R$ 
      \end{proof}

Denote by $\calF'$ the pattern on $K$ obtained by considering all lifts of the pattern $\bar\calF'$.
Every track $\bft$ of $\calF'$ is compact, embedded and locally two-sided. Since $K$ is simply connected, a track $\bft$ separates $K$ into two components.
      
     \begin{claim}
         $\bfw '(\bar\calF'_{\le R})\leq\boldsymbol\delta_R\cdot m$.
     \end{claim}
     
     \begin{proof}
          Since $G$ is one-ended (and $G\actson K$ freely cocompactly), a track $\bft$ separates $K$ to a bounded component $\bft_+$ and an unbounded component $\bft_-$. 
          If a vertex $v_i$ is contained in $\bft_+$ for some track $\bft$ of $\calF’_{\leq R}$, let $\bft(v_i)$ be an outermost track containing $v_i$, otherwise set $\bft(v_i)=v_i$.
          Extend to $\bft(\cdot):K^0\to K^0\cup\{\bft\subset \calF’_{\leq R}\}$ $G$-equivariantly. 
          Set $y_i=\Phi(v_i)$ if $\bft(v_i)=v_i$, set $y_i$ to be an endpoint $(f_i)_+\in\cusped{X}$ of an edge corresponding to the track $\bft$, and $\tild y_i$ the vertical projection of $y_i=(\tild y_i,n)$ onto $X$. 
          Let $\Psi: K^0\to\cusped{X}^0$ and $\tild\Psi:K^0\to X^0$ be the $G$-equivariant maps defined by $\Psi(g.v_i)=g.y_i$, $\tild{\Psi}(g.v_i)=g.\tild y_i$.

          Let $\calF_i'$ be the collection of tracks meeting the edge $e_i$ in $\calF’$, and similarly,  ${\calF’_i}_{\leq R}$ the collection of tracks meeting the edge $e_i$ in $\calF’_{\leq R}$.
          We wish to show that by replacing $\Phi$ with $\tild{\Psi}$ we are able to bound the weight of ${\calF’_i}_{\leq R}$.
          More precisely we will prove that for every $1\leq i \leq m$  
          \begin{equation}\label{equ:weigh bound}
               d(\Psi((e_i)_-),\Psi((e_i)_+))\leq d(\Phi((e_i)_-),\Phi((e_i)_+))-\bfw'({\calF’_i}_{\leq R})/\boldsymbol\delta_R+\boldsymbol{\delta}.
          \end{equation}
          This will follow from the proof of \cite[Claim 6.3]{lazarovich2023finite} (a proof is provided below).
          
         Next, notice that $$d(\tild{\Psi}((e_i)_-),\tild{\Psi}((e_i)_+))-2R\leq d(\Psi((e_i)_-),\Psi((e_i)_+)).$$
          and since we chose $\Phi$ such that is has minimal displacement among all such maps, we get the following inequality 
         \begin{equation*}
         \begin{aligned}    
         \sum_{i=1}^m d(\Phi((e_i)_-,\Phi((e_i)_+))\leq {} & \sum_{i=1}^m d(\tild{\Psi}((e_i)_-),\tild{\Phi}((e_i)_+))\\
         & \leq\sum_{i=1}^m d(\Psi(e_i)_-),\Psi((e_i)_+))+2R \\
         & \leq\sum_{i=1}^m (d(\Phi((e_i)_-),\Phi((e_i)_+))-\bfw'({\calF’_i}_{\leq R})/\boldsymbol\delta_R+\boldsymbol{\delta})+2R)\\
         & \leq\sum_{i=1}^m d(\Phi(e_i)_-),\Phi((e_i)_+))-\sum_{i=1}^m \bfw'({\calF’_i}_{\leq R})/\boldsymbol\delta_R+(\boldsymbol{\delta} + 2R)m  
         \end{aligned}
          \end{equation*}
subtracting $\sum_{i=1}^m d(\Phi(e_i)_-),\Phi((e_i)_+))-\sum_{i=1}^m \bfw'({\calF’_i}_{\leq R})/\boldsymbol\delta_R$ from both sides yields, $$\sum_{i=1}^m \bfw'({\calF’_i}_{\leq R})/\boldsymbol\delta_R\leq (\boldsymbol{\delta} + 2R)m=\boldsymbol\delta_R\cdot m.$$

It remains to prove the that inequality \ref{equ:weigh bound} holds: Let $1\le i\le m$, and denote $e=e_i$, its endpoints $v=e_-$ and $v'=e_+$, and $\Phi(v)=x,\, \Phi(v')=x',\,\Psi(v)=y,\, \Psi(v')=y'$ . 
We wish to prove that $$d(y,y')\leq d(x,x')-\bfw'({\calF’_i}_{\leq R})/\boldsymbol\delta_R+\boldsymbol\delta.$$
Consider $\bft=\bft(v)$ and $\bft'=\bft(v')$. We prove by dividing to cases:

          \begin{enumerate}[label=Case \arabic*]
              \item $\bft=v$, $\bft'=v'$

              By the definition of $\Psi$, both $v$ and $v'$ are not contained in any component $\bfs_+$ for a track $\bfs$ of $\calF_{\leq R}'$.
              In particular, no track $\bfs$ of $\calF'_{\leq R}$ meets $e$, as otherwise either $v\in\bfs_+$ or $v'\in\bfs_+$.
              Thus, $\bfw({\calF'_i}_{\leq R})=0$, and the inequality follows. 
              
              \item $\bft\neq v$, $\bft'\neq v'$.
              \begin{enumerate}[label=Case 2.\arabic*]
              \item $\bft\cap\bft'\neq\emptyset$.
                let $f,f' \in\cusped{X}_{\leq R}$ be the edges corresponding to $\bft$, $\bft'$ respectively, then By property \ref{T4}, $d(y,y')\leq\boldsymbol{\delta}$. 
                Since $(\calF',\bfw')$ is perfect weighted pattern, $\bfw'(t)=\bfw'(c)$ for any connector $\bfc$ on the track $\bft$ in $\calF'$, and by definition of $(\bfw',\calF')$ we also know that for every connector $\bfc$ of $\calF'$ ,$\bfw'(c)\leq\bfw(c)$.
                Summing over all connectors $\bfc$ on the edge $e_i$ we get $$\bfw'({\calF_i'}_{\leq R})=\sum_{\bfc\in e_i}\bfw'(\bfc)\leq\sum_{\bfc\in e_i}\bfw(\bfc)=\norm{q(e)}_1\leq \boldsymbol{\delta}\cdot d(x,x').$$
                Hence, 
                $$0\leq 
                d(x,x')-\bfw'({\calF_i'}_{\leq R})/\boldsymbol{\delta}.$$
                Combining the two inequalities we get 
                $$d(y,y')\leq \boldsymbol{\delta}\leq d(x,x')-\bfw'(\calF_i')/\boldsymbol{\delta}+\boldsymbol{\delta}$$
                
              \item $\bft\cap\bft'=\emptyset$.
              In this case both $\bft$ and $\bft'$ intersect $e$.
              As otherwise, both $\bft_+$ and $\bft'_+$ contain a similar vertex, either $v$ or $v'$, which implies that one of $\bft_+$, $\bft'_+$ contains the other.
              That's in contradiction to the assumption that the map $\bft(\cdot)$ assigns an  outermost track. 
              Let $\bfc$, $\bfc'$ be the connectors in which $\bft$, $\bft'$ meet $e_i$ respectively.
              Then we must have $\bfc < \bfc'$ on $e$. 
              Let $I_1=[v,\bfc]$, $I_2=(\bfc,\bfc']$, $I_3=(\bfc',v']$ be the partition of $e$ into subintervals.
              Let $\calF'_{i,j}$, $1\leq j\leq 3$ be the corresponding partitions of ${\calF_i'}_{\leq R}$ of tracks which meet $I_j$.
              The proof proceeds by bounding the weight of each
              partition $\calF'_{i,j}$.

              If a track $\bft''$ has a connector on $I_2$, then $\bft''_+$ contains one of the endpoints of $e_i$, But as $\bft$ and $\bft'$ are outermost, it follows that each such track $\bft''$ intersects either $\bft$ or $\bft'$. 
              By property \ref{T4}, the edges $f''$ corresponding to the track $\bft''$ is at distance at most $\boldsymbol{\delta}$ from either $f$ or $f'$ (the edges corresponding to the tracks $\bft$, $\bft'$ respectively).
              Since $\cusped{X}$ is locally finite, and since the resolution is composed only from connectors corresponding to edges in $\cusped{X}_{\leq R}$, the number of such edges is at most $\boldsymbol\delta_R$.
              By \Cref{lem : q-bound}, the weigh of $\bft''$ is at most $\boldsymbol{\delta}$, hence we conclude that 
               \begin{equation}
               \bfw'(\calF'_{i,2})\leq\boldsymbol\delta_R.
              \end{equation}
              
              To bound the weight of the remaining two partitions, let $\gamma=\geod{x,x'}$ be the geodesic between $x,x'$ in $\cusped{X}$.
              Since $q$ is quasigeodesic \ref{Def: quasi-geodesic}, there are points $w,w'$ on $\gamma$ at distance at most $\boldsymbol{\delta}$ from $f$, $f'$ respectively. By the definition of a resolution, the edge $f''$ corresponding to a track $\bft''$ in $\calF'_{i,1}$ satisfies $d(x,f'')\leq d(x,f)+\boldsymbol{{\delta}}$. By the triangle inequality, $f''$ is contained in the $\boldsymbol\delta$ neighborhood of the subsegment $\geod{x,w}\subseteq\gamma$.
              As explained in the previous case, we can conclude that there are at most $\boldsymbol\delta_R\cdot d(x,w)$ such edges. And so, by \Cref{lem : q-bound} the weight of $\bft''$ is at most $\boldsymbol\delta_R$.
              And similarly for $\calF_{i,3}$, thus 
              \begin{equation}
              \bfw'(\calF'_{i,1})\leq\boldsymbol\delta_R\cdot d(x,w),\, 
              \, \bfw'(\calF'_{i,3})\leq\boldsymbol\delta_R\cdot d(x',w').
              \end{equation}

              Since $$\bfw'({\calF'_i}_{\le R})=\bfw'(\calF'_{i,1})+\bfw'(\calF'_{i,2})+\bfw'(\calF'_{i,3})$$
              we get 
             \begin{equation}
                 \begin{aligned}
                 d(y,y'){}& \leq\boldsymbol{\delta}+d(w,w')\\
                 &=\boldsymbol\delta+d(x,x')-d(x,w)-d(x',w')\\
                 &\leq \boldsymbol{\delta}+d(x,x')-(\bfw'(\calF'_{i,1})+\bfw'(\calF'_{i,3}))/\boldsymbol\delta_R\\
                 &\leq\boldsymbol{\delta}+d(x,x')-(\bfw'(\calF'_{i,1})+\bfw'(\calF'_{i,2})+\bfw'(\calF'_{i,3}))/\boldsymbol\delta_R\\
                 &\leq \boldsymbol{\delta}+d(x,x')-(\bfw'(\calF'_i))/\boldsymbol\delta_R
                 \end{aligned}
             \end{equation}
              \end{enumerate}
              \item $\bft\neq v, \bft'=v'$.
              This case is done similarly to Case 2.2. The track $\bft$ meets the edge $e$, as otherwise $v'\in\bft_+$.
              Let $\bfc$ be the connector of $\bft$ on $e$.
              We partition $e$ into $[v,\bfc]$ and $(\bfc,v']$ and repeat the argument in Case 2.2. 
          \end{enumerate}

       Thus, we got the desired result, i.e. that for all $1\leq i\leq m$, and $e_i\in K$ an edge representative \begin{equation}
         d(\Psi((e_i)_-),\Psi((e_i)_+))\leq d(\Phi((e_i)_-),\Phi((e_i)_+))-\bfw'(\calF_i')/\boldsymbol\delta_R+\boldsymbol{\delta}.
     \end{equation}

 \end{proof}
Combining the above results we get $$\bfw(\bar\calF_{\leq R})\leq\boldsymbol\delta_R\cdot n+\boldsymbol\delta_R\cdot m+ \leq \boldsymbol\delta_R\cdot \vol{\bar{K}}.$$
This completes the proof of the proposition.
 \end{proof}

\section{Relative group cohomology and the cylindrical cusped space}
\subsection{Cylindrical cusped space}\label{Subsec: cylindrical cusped space}

Bestvina and Mess \cite{bestvina1991boundary} showed how the cohomology of hyperbolic groups relates to the the cohomology of their Gromov boundary.
In \cite{manning2020cohomology}, Manning and Wang extend this result to the relative hyperbolic setting.
Doing so, they define an $N-connected$ cusped space for a type $F_\infty$ pair $(G,\calP)$. As we focus on type $F$ groups, their construction will yield for us a contractible space.
In this section we will define this space, which we call the cylindrical cusped space, and establish its relation to its Rips complex and to the combinatorial cusped space. 

\begin{definition}
    A group is of \emph{type} $F$ if it admits a finite Eilenberg-MacLane classifying space.
    We say that a pair $(G,\calP)$ is of type-F if $G$ and all $P\in \calP$ are of type $F$.
\end{definition}  
Dahmani \cite{dahmani2003classifying} showed that if a group $G$ is torsion-free and hyperbolic relative to a finite collection of subgroups $\calP$, each of type-F, then $G$ is also of type $F$. 

\begin{definition}[{\cite[Definition 3.1]{manning2020cohomology}}]
\label{Def: Cylindrical cusped space}
    Let $(G,\calP)$ be a relative-hyperbolic pair of type $F$, and let $(\bar X,\bar\calA)$ be (finite) simplicial relative classifying space (cf. \Cref{notation: complexity}).
    We denote by $X$ the universal cover of $\bar X$ and by $A_i$ the lifts of $\bar \calA_i$ such that $(G,\calP) \actson (X,\{A_i\})$.
    The \emph{cylindrical cusped space} $\cyl{X,\calA}$ (or $\cyl{X}$ if $\calA$ is understood from the context) is the universal cover of the open mapping cylinder defined as: $$ \bar{X}\sqcup([0,\infty)\times\sqcup_{i=1}^n\bar A_i)\big/ {x\sim(0,x),\; \forall x\in \bar{A_i}}.$$ 
\end{definition}
 Following \cite{manning2020cohomology}, we equip $\cyl{X,\calA}$ with a $G$-equivariant simplicial structure whose vertex set is the vertices in $X$ together with all points in the lift of a vertex $(v,n)$ where $n\in\bbN_0$, for all vertices $v$ in $\bar A_i$ for all $1\le i\le n$.
    
The space $\cyl{X,\calA}$ used in \cite{manning2020cohomology} is manufactured specifically for the purpose of allowing arguments as in \Cref{prop: Cohomology  boundary isomorphism } below. We would like to utilize those results, together with the results on $\comb{X,\calA}$ described in \Cref{Pre: Mineyev's bicombing}.
Manning and Wang describe a metric on $\cyl{X,\calA}$ that gives rise to a quasi-isometry $\cyl{X,\calA}\to \comb{X,\calA}$.
This metric is the quotient metric defined as the Euclidean metric on simplices of $X$, and on each horoball $C = [0,\infty) \times A_i$ it is the \emph{warped product metric} $$C=[0,\infty)\times_{2^{-t}}A_i.$$
Here, the warped product is induced from the following length: 
Let $X$ and $Y$ be two length spaces and let $\varphi:[0,1]\to X\times Y$ be a path where $\varphi(t)=(\alpha(t),\beta(t))$.
The length of $\varphi$ is defined as the supremum over all partitions $\tau=\{0=t_0<\dots<t_n=1\}$ of $[0,1]$ of $$\calL_\tau(\varphi)=\sum_{i=1}^n \sqrt{d_X(\alpha(t_i),\alpha(t_{i-1}))^2+2^{-2\alpha(t_i)}d_Y(\beta(t_i),\beta(t_{i-1}))^2}.$$

 Notice that this metric on $\cyl{X,\calA}$ is not the simplicial shortest path metric, since elements in $t \times A_i$ are getting closer to each other as $t$ increases, but the simplicial shortest path between them never changes.   

\begin{proposition}[{\cite[Proposition 3.10]{manning2020cohomology}}]\label{Prop: Cusped spaces are quasi-isometric q.i} Given the above setting, $\cyl{X,\calA}$ is quasi-isometric to $\comb{X,\calA}$.
\end{proposition}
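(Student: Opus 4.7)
The plan is to construct an explicit $G$-equivariant comparison map $\phi\colon\cyl{X,\calA}\to\comb{X,\calA}$ which is the identity on the common subspace $X$ (at the level of $1$-skeleta, up to bounded distortion coming from the Euclidean metric on higher-dimensional simplices of $X$), and which on each horoball cylinder $[0,\infty)\times_{2^{-t}} A_i$ sends $(t,a)$ to the combinatorial horoball vertex $(a,\lfloor t\rfloor)\in\calH(A_i)$. Such a $\phi$ is manifestly coarsely surjective, so to conclude it is a quasi-isometry it remains to establish a two-sided distance bound. Since both spaces are built by gluing horoballs to $X$ along the same subcomplexes $A_i$, and every geodesic decomposes into alternating pieces contained in $X$ and pieces contained in a single horoball, the problem reduces to a horoball-by-horoball comparison.

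The heart of the argument is then to check that the two horoball distance formulas agree up to a universal additive constant. On the combinatorial side, an easy direct computation (essentially the one in Groves--Manning) gives
\begin{equation*}
d_{\calH}\bigl((v,m),(w,n)\bigr) \asymp |m-n| + 2\max\bigl\{0,\lceil\log_2 d_\Gamma(v,w)\rceil - \max(m,n)\bigr\}
\end{equation*}
by analysing the optimal height at which to make a single horizontal jump. On the warped-product side, I would argue that any rectifiable path from $(t_1,a_1)$ to $(t_2,a_2)$ may be replaced, at bounded additive cost, by a three-stage ``up--across--down'' path which climbs to some height $T\geq\max(t_1,t_2)$, moves horizontally at that height where the metric is rescaled by $2^{-T}$, and descends. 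Minimising
\[
(T-t_1)+(T-t_2)+2^{-T}\,d_{A_i}(a_1,a_2)
\]
over $T\geq\max(t_1,t_2)$ by elementary calculus yields a distance equal to $|t_1-t_2|+2\max\{0,\log_2 d_{A_i}(a_1,a_2)-\max(t_1,t_2)\}$ up to $O(1)$. Matching this with the previous display, after identifying depths with floors, gives the required bi-Lipschitz-plus-additive comparison between $d_{\cyl}$ and $d_{\comb}\circ\phi$ on each horoball.

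The step I expect to be the main obstacle is making the warped-product calculation rigorous: the warped product is only a length space defined via a supremum over partitions, not a Riemannian manifold, so one must argue carefully that geodesics really do have the ``up--across--down'' shape and that competing paths cost strictly more. Here I would lean on the negatively curved nature of $[0,\infty)\times_{2^{-t}} A_i$, which behaves like a $\textrm{CAT}(-1)$ horoball: vertical fibres are geodesic, horizontal motion is convexly penalised by the factor $2^{-t}$, and strict convexity of the length functional in $T$ forces a unique optimal height. Once the two horoball distance formulas are established, they glue along $X$ (where $\phi$ is essentially an isometry) into a global quasi-isometry estimate, with quasi-isometry constants depending only on the chosen relative classifying space $(\bar X,\bar\calA)$.
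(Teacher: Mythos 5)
First, a point of reference: the paper does not prove this proposition at all --- it is imported verbatim from Manning--Wang (their Proposition 3.10), so there is no in-paper argument to compare yours against; I can only judge your sketch on its own terms. Your overall route (explicit comparison map, identity on $X$, depth-rounding on horoballs, horoball-by-horoball distance comparison) is the natural one, and both distance estimates you write down are correct up to additive constants. One remark in your favour: the step you single out as the main obstacle is actually the easy one. You do not need any $\mathrm{CAT}(-1)$ behaviour, convexity, or uniqueness of optimal heights: for an arbitrary rectifiable path in $[0,\infty)\times_{2^{-t}}A_i$ from $(t_1,a_1)$ to $(t_2,a_2)$, let $T$ be the maximal depth it attains; its vertical variation is at least $(T-t_1)+(T-t_2)$ and its horizontal contribution is at least $2^{-T}d_{A_i}(a_1,a_2)$, so its length dominates the same quantity $\min_T\bigl[(T-t_1)+(T-t_2)+2^{-T}d_{A_i}(a_1,a_2)\bigr]$ that the up--across--down path realizes, directly from the definition of the warped length as a supremum over partitions.

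The genuine gap is in the step you pass over quickly, namely gluing the horoball estimates along $X$. ``Every geodesic decomposes into pieces in $X$ and pieces in a single horoball'' is true, but applying a per-piece quasi-isometry estimate costs an additive constant for each piece, and the number of pieces is not controlled by the total length: in the warped-product space the horizontal metric at depth $t$ is scaled by $2^{-t}<1$, so a geodesic between nearby points of $A_i$ dips into the horoball, and a path can in principle make many excursions of arbitrarily small cylindrical length whose images in $\comb{X,\calA}$ each have length at least $1$. So the reduction to a horoball-by-horoball comparison, as stated, does not yet give either inequality. The standard repair avoids the distance formulas altogether: show that your map $\phi$ and the obvious coarse inverse $\psi$ (identity on vertices of $X$, and $(v,n)\mapsto(v,n)$ on horoball vertices) are both coarsely Lipschitz via the chaining criterion --- points at distance at most $1$ have images at uniformly bounded distance. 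For $\psi$ this is immediate: a vertical edge maps to a path of length $1$, and a horizontal edge at depth $n$ joins vertices with $d_{A_i}\le 2^n$, hence of warped distance at most $2^{-n}\cdot 2^n=1$; for $\phi$ one only needs that projecting to $X$ from bounded depth distorts distances by a bounded factor, plus the existence of horizontal edges at depth roughly $\log_2 d_{A_i}$. Since $\phi\circ\psi$ and $\psi\circ\phi$ are at bounded distance from the identities, the quasi-isometry follows with constants depending only on $(\bar X,\bar\calA)$, and your horoball distance formulas become a corollary rather than the engine of the proof.
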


\begin{remark} \label{rem:q.i}
    \Cref{Prop: Cusped spaces are quasi-isometric q.i} gives an identification of $\partial(G,\calP) = \partial \comb{X,\calA}$ with the boundary of $\partial \cyl{X,\calA}$. 
\end{remark}

We define the depth of a vertex $(\bar v, t)\in\cyl{X}$ to be $\depth(v)=t$.
For a simplex $\sigma\in\cyl{X}$ we define $\depth(\sigma)$ to be the smallest interval containing $\depth(\sigma^0)$.
For $R\in\bbR$, we define $\cyl{X}_{\leq R}$ to be the complete subcomplex of $\cyl{X}$ spanned by vertices of depth at most $R$.     

In Section \ref{Section : construction of q.s map}, we would like to use the contractibility properties of a Rips-complex (\cite[III.$\Gamma$ Proposition 3.23]{bridson2013metric}).
We denote the Rips complex of parameter $D$ for a metric space $Z$ by $R_D(Z)$; this is the simplicial complex whose vertices are points of $Z$ and simplices are sets of diameter at most $D$.
we write $R_D(\cyl{X})$ to refer to the Rips complex on the vertices of $\cyl{X}$ with respect to the warped metric.
As vertices of $R=R_D(\cyl{X})$ can be identified with vertices of $\cyl{X}$, the depth of vertices is defined similarly.
We define $R_{\leq M}$ to be the full subcomplex on vertices at depth at most $M$.  

\begin{lemma}\label{lem: Homotopy equivalence}
    For all $D\geq 0$ there exists a proper continuous map $h:R_D(\cyl{X})\to \cyl{X}$, such that the restriction of $h$ to the $1$-skeleton $h_{|R_D(\cyl{X})^{(1)}}$ is a quasi-isometry.
\end{lemma}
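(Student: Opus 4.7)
The crux is that $\cyl{X}$ is contractible: the open mapping cylinder $Y$ of \Cref{Def: Cylindrical cusped space} deformation retracts onto $\bar X$, which is a finite $K(G,1)$, so $Y$ is itself a $K(G,1)$ and its universal cover $\cyl{X}$ is contractible. The plan is to build $h$ by $G$-equivariant induction on the skeleta of $R:=R_D(\cyl{X})$.

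Declare $h$ to be the identity on $R^{(0)}=\cyl{X}^{(0)}$. For each $G$-orbit of edges $\{u,v\}$ of $R$, pick a representative; since $d_{\cyl{X}}(u,v)\le D$, send this edge linearly to a warped geodesic in $\cyl{X}$ from $u$ to $v$ and then extend $G$-equivariantly. For $n\ge 2$, assume $h$ is defined on $R^{(n-1)}$; the map $h|_{\partial\sigma}\colon\partial\sigma\to\cyl{X}$ can be filled in for every $n$-simplex $\sigma$ by contractibility of $\cyl{X}$, and one does so on a $G$-orbit representative before extending equivariantly. The care is to arrange the extensions so that for some $C=C(D)$, the image $h(\sigma)$ of every simplex $\sigma$ lies in the warped $C$-neighborhood of its vertex set. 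This is done slab-by-slab in depth: the $G$-action on each depth slab $\{n\le\depth\le n+1\}$ has only finitely many orbits of simplices, and the warping factor $2^{-\depth}$ shrinks rather than enlarges fillings deeper in a horoball, so bounds chosen on orbit representatives near depth zero propagate uniformly to all depths.

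Continuity is built into the construction. Properness follows because $\cyl{X}^{(0)}$ is locally finite in the warped metric (vertical rays have infinite warped length, so depth is bounded on bounded sets, and at bounded depth a warped ball contains only finitely many simplicial vertices coming from the underlying $A_i$). Hence for compact $K\subset\cyl{X}$, the set $h^{-1}(K)$ sits in the $C$-warped-neighborhood of the finite set $K\cap\cyl{X}^{(0)}$ and spans a finite subcomplex of $R$. For the quasi-isometry on $R^{(1)}$ with its graph metric: $h$ is the identity on vertices and sends each edge to a warped path of length $\le D$, so $d_{\cyl{X}}(u,v)\le D\cdot d_{R^{(1)}}(u,v)$; conversely, since $\cyl{X}^{(0)}$ is coarsely dense in $\cyl{X}$, for $D$ sufficiently large one can sample a warped geodesic from $u$ to $v$ and snap to nearby vertices to obtain a chain in $R^{(1)}$ of length at most $d_{\cyl{X}}(u,v)/D+O(1)$. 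Coarse density of $\cyl{X}^{(0)}$ in $\cyl{X}$ then yields the coarse surjectivity needed to conclude that $h|_{R^{(1)}}$ is a quasi-isometry.

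The main obstacle is the $G$-equivariant inductive extension with uniformly bounded displacement, precisely because $G\actson\cyl{X}$ is not globally cocompact in the depth direction. The slab-by-slab scheme, combined with the contracting effect of the warping in deep cusps, is what makes the bounded-displacement control go through and in turn underwrites both properness and the quasi-isometry estimate.
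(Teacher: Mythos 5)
Your overall skeleton (identity on vertices, geodesic edges, $G$-equivariant inductive filling of higher simplices, QI on the $1$-skeleton via bounded edge-images plus coarse density of the vertex set) matches the paper's, but the key uniform control you invoke has a genuine gap. You want every simplex $\sigma$ to satisfy $h(\sigma)\subseteq N_C(\sigma^{(0)})$ with $C=C(D)$, argued ``slab-by-slab'' on the grounds that each depth slab has finitely many $G$-orbits of simplices and that bounds ``propagate uniformly to all depths.'' But the $G$-action on a horoball $[0,\infty)\times A_i$ preserves the depth coordinate, so no group element relates different slabs: the finitely many orbits in the slab at depth $n$ are controlled by the $P_i$-action on $A_i$, and their number is \emph{not} uniformly bounded in $n$, because a single Rips edge at depth $n$ may join vertices at $A_i$-distance roughly $D\cdot 2^{n}$. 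Equivariance therefore gives you nothing across depths, and the geometry genuinely worsens: filling a Rips simplex deep in a horoball by pushing it further up the cusp and capping it with a disk in $A_i$ produces a filling whose warped diameter is governed by the filling-diameter function of $P_i$ applied to loops of $A_i$-length $\sim 2^{n}$, which need not be bounded independently of $n$ for a general type-$F$ peripheral. So the uniform displacement bound $C(D)$ is unjustified (and dubious in this generality), and since your properness argument rests entirely on it, properness is not established.

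The fix is to ask for much less, which is what the paper does: control only the \emph{depth} of the image, not its warped diameter. For a simplex with $\depth(\sigma)=[a,b]$ and $b>D$, its vertices lie in a single horoball and the boundary $h(\partial\sigma)$ lies in the contractible product $[a,b]\times A_i$, so one can extend $h$ over $\sigma$ keeping $\depth(h(\sigma))\subseteq[a,b]$; for $b\le D$ one fills inside the contractible region of depth $\le D$. This makes $h$ depth-preserving and $G$-equivariant, and properness then follows from cocompactness of $G\actson\cyl{X}_{\le M}$ (and of the corresponding bounded-depth part of $R_D(\cyl{X})$) together with properness of the $G$-action, exactly as in the paper's properness claim for $\Phi$ --- no metric bound on the fillings is needed. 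Your treatment of the quasi-isometry on the $1$-skeleton (edges to warped geodesics of length $\le D$ one way, snapping a geodesic to the coarsely dense vertex set the other way) is fine and agrees with the paper, modulo the harmless caveat that $D$ must be large enough for the Rips graph to be connected.
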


\begin{proof}
    We repeat the proof of \cite[Lemma 3.13]{manning2020cohomology}, defining $h$ to map simplices in a depth preserving way. We then explain how we can amend $h$ to be a quasi-isometry on the edge set.

    Denote the Rips complex by $R:=R_D(\cyl{X})$.
    We say that a continuous map $h:R\to \cyl{X}$ is depth preserving if for each simplex $\sigma$ of $R$ and $I\subseteq [0,\infty)$ the smallest interval containing $\depth(\sigma^0)$, $$\depth(h(\sigma))\subseteq
\begin{cases}
[0,\sup I] & \inf I\le D \\
I &  \inf I>D
\end{cases}$$

     The vertices of $R$ are identified with the vertices of $\cyl{X}$. We proceed to define $h$ by induction on the dimension of simplices: 
     Let $\sigma\subset R$ be a representative for the $G$-orbits of $k$-simplices.
     If $\sigma$ is already in $\cyl{X}$, we define $h(\sigma)$ to be the identity.
     Otherwise, let $\depth(\sigma)=[a,b]$.
     Suppose first that $b>D$, then the vertices of $\sigma$ are contained in a certain horoball, and by the induction hypotheses $\depth(h(\partial\sigma))\subseteq [a,b]$. 
     Thus, $h(\partial\sigma)$ is contained in a certain horoball.
     Moreover, it is within a product of the form $[a,b]\times A_i$ for some $i$.
     As $A_i$ is contractible, so is the product, and we can continuously extend $h$ to $\sigma$ such that $\depth(h(\sigma))\subset [a,b]$. 
     Suppose now that $b\le D$, then $\depth(h(\partial\sigma))\leq D$, and so $h(\sigma)$ is contained in the subspace consisting of points at depth $\leq D$, which is again contractible. 
    
    In the last case, where $b\le D$, we can choose $h$ such that it maps edges in $R$ to paths of length at most $2^{2D}$.
    Otherwise, since horoballs are uniformly quasi-convex, we can choose $h$ such that it maps edges to paths in of length $D$, up to some uniform constants.
    That implies that $h$ is quasi-isometry on the $1$-skeleton. 
    
    Notice that $h$ is a $G$-equivariant, depth-preserving map.
    We can then conclude that it is proper (see \Cref{Phi is proper} for details).
\end{proof}

\subsection{Relative group cohomology}
Let $(G,\calP)$ be a group pair.
Bieri \cite{bieri1978relative} defines the cohomology group a pair in the following way: denote by $\Delta_{G/\calP}$ the kernel of the augmentation map $\epsilon: \oplus_i\bbZ G/P_i \to \bbZ $, defined by $\epsilon(xP_i)=1$.
Given a projective resolution of $\Delta_{G/\calP}$ as a $G$-module, namely an exact sequence of projective modules $$\dots \to F_n\to \dots \to F_1\to\
F_0\to \Delta_{G/\calP}$$
one can apply the covariant functor $Hom(-,M)$, then define:
\begin{definition}\label{def:rel_cohm}
    The \emph{(relative) cohomology of the pair} $(G,\calP)$ with coefficients in M is defined by $$H^k(G,\calP;M):=H^{k-1}(Hom(F,M))=Ext_{\bbZ G}^{k-1}(\Delta_{G/\calP}; M)$$
\end{definition}

\begin{definition}
    The \emph{cohomological dimension} of $(G,\calP)$ is $$\cd(G,\calP)=\max\{n\in\bbN | \exists \ G\text{-module }M \text{ with } H^n(G,\calP;M)\neq0\}.$$ 
\end{definition}

We will consider relatively hyperbolic torsion-free pairs $(G,\calP)$.
We further assume that $\calP$ is a finite collection of type $F$ subgroups, which implies that $(G,\calP)$ is of type $F$ (\cite{dahmani2003classifying}). 

\begin{proposition} [{\cite[Lemma 2.9]{kapovich2009homological}}]
    If $(G,\calP)$ is of type $F$, then  $$\cd(G,\calP)=max\{n\in\bbN \;|\; H^n(G,\calP;\bbZ G)\neq0\}$$
\end{proposition}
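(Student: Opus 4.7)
One direction is immediate from the definition: since $\bbZ G$ is a particular coefficient module, any $n$ with $H^n(G,\calP;\bbZ G)\neq 0$ satisfies $n\le \cd(G,\calP)$.

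For the reverse direction I plan to set $n=\cd(G,\calP)$, pick some $\bbZ G$-module $M$ realizing $H^n(G,\calP;M)\neq 0$, and bootstrap to $\bbZ G$-coefficients via a free presentation. Write $M$ as a quotient $F\twoheadrightarrow M$ of a free module $F=\bigoplus_{i\in I}\bbZ G$, and let $K$ be the kernel. The long exact sequence in relative cohomology attached to $0\to K\to F\to M\to 0$ contains
\[
H^n(G,\calP;F)\to H^n(G,\calP;M)\to H^{n+1}(G,\calP;K),
\]
and since $n=\cd(G,\calP)$ the right-hand term vanishes. Hence the first map is surjective and $H^n(G,\calP;F)\neq 0$.

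The remaining task is to show that $H^n(G,\calP;-)$ commutes with direct sums; this would force $H^n(G,\calP;\bbZ G)\neq 0$ since $F$ is a direct sum of copies of $\bbZ G$. This is where the type $F$ hypothesis enters in an essential way. The plan is to extract from a finite simplicial relative classifying space $(\bar X,\bar \calA)$ (cf.\ \Cref{notation: complexity}) a finite resolution of $\Delta_{G/\calP}$ by finitely generated free $\bbZ G$-modules. Given such a resolution $F_\bullet$, each functor $\mathrm{Hom}_{\bbZ G}(F_k,-)$ preserves arbitrary direct sums because $F_k$ is finitely generated, and cohomology of a direct sum of cochain complexes splits as a direct sum of cohomologies; combining the two gives the desired commutation of $\mathrm{Ext}^{n-1}_{\bbZ G}(\Delta_{G/\calP};-)\cong H^n(G,\calP;-)$ with direct sums.

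The main obstacle is the construction of this finite free resolution of the augmentation kernel $\Delta_{G/\calP}$, rather than merely of $\bbZ$. The plan is to take the cellular chain complex of the universal cover of $(\bar X,\bar\calA)$ and splice it, via the natural surjections $\bigoplus_i \bbZ[G/P_i]\twoheadrightarrow \bbZ$, to exhibit $\Delta_{G/\calP}$ as the kernel at the bottom; the finiteness and freeness of the resulting modules follow from the finite CW-structure together with the freeness of the $G$-action on cells. Once this identification is in place, the truncation argument above closes the proof.
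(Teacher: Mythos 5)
The paper does not prove this proposition at all---it is quoted directly from Kapovich's Lemma 2.9---so there is no internal proof to compare against; what you have written is the standard argument behind that lemma (the relative analogue of the classical fact that for groups of type $FP$, $\cd$ is detected by group-ring coefficients), and it is essentially correct. Two small points deserve care. First, you set $n=\cd(G,\calP)$ and choose $M$ with $H^n(G,\calP;M)\neq 0$ before you have shown that $\cd(G,\calP)<\infty$; the finiteness is exactly what your finite resolution of $\Delta_{G/\calP}$ provides, so that construction should come first in the write-up (otherwise the choice of $n$ is not justified). Second, in the splicing step: the relative chain complex $C_*(X,\widetilde{\calA})$ of the universal cover of a finite relative classifying space is a finite complex of finitely generated free $\bbZ G$-modules which is acyclic except in degree $1$, where the long exact sequence of the pair identifies $H_1$ with $\ker\bigl(\bigoplus_i\bbZ[G/P_i]\to\bbZ\bigr)=\Delta_{G/\calP}$; to turn this into a resolution of $\Delta_{G/\calP}$ you must truncate, and since $H_0(X,\widetilde{\calA})=0$ the surjection $C_1\to C_0$ splits, so the bottom term $Z_1=\ker(C_1\to C_0)$ is only finitely generated \emph{projective} (stably free), not free on the nose. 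That is harmless---finitely generated projectives suffice both for computing $\mathrm{Ext}^{*}_{\bbZ G}(\Delta_{G/\calP};-)$ and for the commutation of $\mathrm{Hom}_{\bbZ G}(P,-)$ with arbitrary direct sums---but the claim "finite free resolution" should be weakened or justified accordingly. With these adjustments your long-exact-sequence and direct-sum argument closes the proof as intended.
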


\begin{lemma}\label{lem:coh.dim}
Let $(G,\calP)$ be a type-F group pair with $\calP\neq\{G\}$ and $G$ non-trivial.
Then there exists $k\ge1$ for which the relative cohomology group $H^k(G,\calP;\bbZ G)$ is non-trivial.  
\end{lemma}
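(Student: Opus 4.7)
The plan is to proceed by contradiction, using the preceding proposition to convert the hypothesis into universal vanishing of relative cohomology, and then applying the identity-morphism trick to $\Delta_{G/\calP}$. Specifically, I assume $H^k(G, \calP; \bbZ G) = 0$ for all $k \geq 1$ and aim to derive $\Delta_{G/\calP} = 0$, contradicting $\calP \neq \{G\}$.

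First I invoke the preceding proposition---valid because $(G, \calP)$ is of type F---to conclude $\cd(G, \calP) = 0$. Unpacking the definition of cohomological dimension, this yields
\begin{equation*}
H^k(G, \calP; M) = \operatorname{Ext}^{k-1}_{\bbZ G}(\Delta_{G/\calP}, M) = 0
\end{equation*}
for every $k \geq 1$ and every $\bbZ G$-module $M$. Specializing to $k = 1$ and $M = \Delta_{G/\calP}$ gives $\operatorname{Hom}_{\bbZ G}(\Delta_{G/\calP}, \Delta_{G/\calP}) = 0$; as this group contains the identity morphism, we must have $\Delta_{G/\calP} = 0$.

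To close the argument, I verify directly that $\Delta_{G/\calP} \neq 0$ under the hypotheses. The type F condition forces $G$ to be torsion-free, and since $G$ is also nontrivial it is infinite. Assuming $\calP$ is nonempty, $\calP \neq \{G\}$ implies either $|\calP| \geq 2$ or that the unique peripheral subgroup is a proper subgroup of $G$; in either case the source $\bigoplus_i \bbZ[G/P_i]$ of the augmentation $\epsilon$ has $\bbZ$-rank at least $2$, so $\Delta_{G/\calP} = \ker(\epsilon)$ is nonzero. The edge case $\calP = \emptyset$ is handled separately: there relative cohomology reduces to ordinary cohomology $H^k(G; \bbZ G)$, and $\cd(G) \geq 1$ for a nontrivial torsion-free type F group supplies the required nonvanishing directly. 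The one nontrivial input is the preceding proposition, so the main point to be careful about is the passage from vanishing with $\bbZ G$-coefficients to universal vanishing, which is precisely what that proposition secures in the type F setting.
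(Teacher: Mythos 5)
Your proof is correct and follows essentially the same route as the paper's: both use the preceding proposition (valid since $(G,\calP)$ is of type F) to reduce to $\bbZ G$-coefficients, and then show that vanishing in all positive degrees would force $\Delta_{G/\calP}=0$, which is impossible when $\calP\neq\{G\}$. The only differences are cosmetic: the paper deduces $\Delta_{G/\calP}=0$ via $\cd(G,\calP)-1=\prd(\Delta_{G/\calP})=-1$ while you use the identity element of $\operatorname{Hom}_{\bbZ G}(\Delta_{G/\calP},\Delta_{G/\calP})$, and you additionally treat the edge case $\calP=\emptyset$, which the paper's proof leaves implicit.
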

\begin{proof}
Denote by $\prd(N)$ the projective dimension of a $G$-module $N$.
By \cite[VIII,2.1]{brown2012cohomology}, $\cd(G, \calP)-1=\prd(\Delta_{G/\calP})$. 

Notice that $\prd(N)=-1$ if and only if there exists a resolution $0\to N \to 0$, which implies $N=0$. In particular, for $N=\Delta_{G/\calP}$, this implies that $\epsilon$ is injective.

It remains to note that if $\oplus_i\bbZ G/P_i \to Z$ is injective, $\calP$ cannot contain more than one element $P$, thus the quotient $G/P$ must be trivial, i.e. $G$ must be equal to $P$.
\end{proof}

Finally, the Bowditch boundary of a relatively hyperbolic group entirely encodes the relative group cohomology in the following sense:

\begin{theorem}[{\cite[Theorem 1.1]{manning2020cohomology}}]
\label{prop: Cohomology  boundary isomorphism }
If $(G,\calP)$ is relatively hyperbolic pair of type $F_\infty$, then for every $k$ there is an isomorphism of $AG$-modules $$H^k(G,\calP;AG)\to\checkH{k-1}{\partial(G,\calP);A}$$
\end{theorem}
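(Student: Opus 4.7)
The plan is to generalize the Bestvina--Mess cohomology--boundary duality from hyperbolic groups to the relatively hyperbolic setting, using $\cyl{X}$ as a contractible classifying model for $(G,\calP)$ and a suitable compactification by the Bowditch boundary.

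First, I would compactify $\cyl{X}$ by the Bowditch boundary. Via the quasi-isometry $\cyl{X,\calA}\to\comb{X,\calA}$ from \Cref{Prop: Cusped spaces are quasi-isometric q.i} together with the identification in \Cref{rem:q.i}, set $\overline{\cyl{X}}:=\cyl{X}\cup\partial(G,\calP)$ and equip it with the topology in which points of $\partial(G,\calP)$ have neighborhood bases coming from horoball tails and visual ``shadows'' at infinity. The type-$F_\infty$ hypothesis on each $P_i$ lets one build nested sequences of highly connected neighborhoods inside each horoball, so that $\overline{\cyl{X}}$ is metrizable and $\partial(G,\calP)$ sits as a $\calZ$-set, meaning that there exists a homotopy $\overline{\cyl{X}}\times[0,1]\to\overline{\cyl{X}}$ starting at the identity and instantly pushing $\partial(G,\calP)$ into $\cyl{X}$. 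Inside each warped horoball $[0,\infty)\times_{2^{-t}}\bar{A}_i$, which shrinks metrically to a single cusp point at infinity, the homotopy is obtained by flowing inward along the $t$-coordinate.

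Second, with the $\calZ$-set property in hand, the long exact cohomology sequence of the pair $(\overline{\cyl{X}},\partial(G,\calP))$, combined with the contractibility of $\overline{\cyl{X}}$ and the identification of \v{C}ech cohomology of $\partial(G,\calP)$ with the direct limit of ordinary cohomology of its open neighborhoods, yields an Alexander-type duality isomorphism
\begin{equation*}
    \check{H}^{k-1}(\partial(G,\calP);A)\;\cong\;H^k_c(\cyl{X};A).
\end{equation*}
Since the $G$-action on $\cyl{X}$ extends continuously to the compactification and preserves the $\calZ$-set homotopy up to an equivariant average, this isomorphism can be promoted to one of $AG$-modules.

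Third, I would identify $H^k_c(\cyl{X};A)$ with the relative group cohomology $H^k(G,\calP;AG)$ of \Cref{def:rel_cohm}. The cellular cochain complex with compact supports on $\cyl{X}$ fits in a short exact sequence whose middle term is the mapping cone of the restriction $C^*_c(X;A)\to\bigoplus_{i,\,gP_i\in G/P_i}C^*_c([0,\infty)\times\bar{A}_i;A)$. Because each ray $[0,\infty)$ has compactly supported cohomology concentrated in degree one, this cone shifts degrees by one and, once assembled $G$-equivariantly, yields exactly the complex computing $\mathrm{Ext}^{k-1}_{\bbZ G}(\Delta_{G/\calP};AG)$. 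Composing with the duality from the second step produces the desired map $H^k(G,\calP;AG)\to\check{H}^{k-1}(\partial(G,\calP);A)$.

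The main obstacle is the first step: verifying the $\calZ$-set property for $\partial(G,\calP)\subset\overline{\cyl{X}}$. In the purely hyperbolic case this is the classical Bestvina--Mess argument, but in the relative setting one must push points to cusp points through the warped horoballs while keeping continuity across the interface between the Euclidean region of $X$ and the cusp regions. This is precisely where both the $F_\infty$ assumption on the peripherals, needed to find arbitrarily small contractible neighborhoods of each parabolic point, and the warping factor $2^{-t}$, which collapses each horoball to its cusp, enter in an essential way.
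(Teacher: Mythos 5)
First, note that the paper does not prove this statement at all: it is imported verbatim from Manning--Wang \cite[Theorem 1.1]{manning2020cohomology}, so the only meaningful comparison is with their argument. Your outline follows the same Bestvina--Mess-style strategy that they use (compactify the cusped space by the Bowditch boundary, deduce $\checkH{k-1}{\partial(G,\calP);A}\cong H^k_c(\cyl{X};A)$ from a long exact sequence, then identify $H^k_c(\cyl{X})$ with relative group cohomology), but two of your steps have genuine gaps. The main one is the $\calZ$-set claim. The homotopy you describe only treats parabolic points by flowing down the $t$-coordinate of a horoball; it says nothing about conical limit points, nor about continuity across the interface between the horoballs and the thick part, and since $G$ does not act cocompactly on $\cyl{X}$ the usual cocompactness/uniform-contractibility ingredients of the Bestvina--Mess construction \cite{bestvina1991boundary} do not transfer verbatim. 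Manning and Wang deliberately avoid building a $\calZ$-structure: they verify weaker, purely local (co)homological connectivity conditions at boundary points, which are all that the long exact sequence argument needs. Moreover, the theorem is stated for type $F_\infty$ pairs, where the cylindrical cusped space can only be taken $N$-connected for each $N$ rather than contractible, so the argument must be run in a range of degrees using finite skeleta; your proposal implicitly assumes type $F$ (which is all this paper needs, but not what the cited statement asserts).

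The second gap is in your Step 3. The justification "each ray $[0,\infty)$ has compactly supported cohomology concentrated in degree one" is false: $[0,\infty)$ is a half-open interval, so $H^*_c([0,\infty))$ vanishes identically (you are thinking of $\bbR$). Consequently there is no degree shift produced by the rays; instead, each horoball $[0,\infty)\times A_i$ has vanishing compactly supported cohomology (K\"unneth with compact supports), and this is what yields $H^k_c(\cyl{X})\cong H^k_c\bigl(\cyl{X},\textstyle\bigsqcup g\,\calH_i\bigr)\cong H^k_c\bigl(X,\bigsqcup gA_i\bigr)\cong H^k(G,\calP;AG)$ with no shift at all. The shift by one relative to $\mathrm{Ext}^{k-1}_{\bbZ G}(\Delta_{G/\calP};AG)$ is purely the algebraic normalization built into \Cref{def:rel_cohm}, and the geometric shift in the statement of the theorem comes from the long exact sequence of the pair $(\overline{\cyl{X}},\partial(G,\calP))$ in your Step 2, not from the horoball cone. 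As written, the mechanism you give for Step 3 would instead leave spurious peripheral terms $H^{*-1}_c(A_i)$ in the computation, so this step needs to be redone along the lines above.
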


\section{Uniform quasi-surjectivity}\label{Section: Quasi surjectivity}

The goal of this section is to prove the following theorem on uniform coarse surjectivity for cyclindrical cusped spaces of finite-index subgroups.

\begin{theorem}\label{Theorem: Quasi-suerjectivity} 
Let $(G,\calP)$ be a torsion-free relatively hyperbolic group, and let $(\bar X,\bar \calA)$ be a relative classifying space for $(G,\calP)$.
There exists $R_0 = R_0(X)$ such that for every subgroup $H\leqslant G$ of finite index, every relative classifying space $(\bar K,\bar \calB)$ for $(H,\calP_H)$, and every continuous $H$-equivariant map $\Phi: \cyl{K,\calB} \to \cyl{X,\calA}$ that extends continuously to a homeomorphism $\partial\Phi : \partial\cyl{K,\calB} \to\partial \cyl{X,\calA}$, we have $ X\subseteq N_{R_0}(\Phi(\cyl{K,\calB}))$.
\end{theorem}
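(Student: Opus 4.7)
The plan is to argue by contradiction using Čech cohomology of the Bowditch boundary, following the Bestvina--Mess philosophy as adapted to the relative setting by Manning--Wang (\Cref{prop: Cohomology  boundary isomorphism }). A hypothetical large ``hole'' $B(x,R_0)\subseteq\cyl{X,\calA}$ disjoint from $\Phi(\cyl{K,\calB})$ will be detected by a non-vanishing relative cohomology class of $(G,\calP)$ that the boundary homeomorphism $\partial\Phi$ is forced to preserve.

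By \Cref{lem:coh.dim}, fix $k\ge 1$ with $H^k(G,\calP;\bbZ G)\ne 0$. Since $[G:H]<\infty$ and $H$ is torsion-free of type F, the pair $(H,\calP_H)$ is type F with $H^k(H,\calP_H;\bbZ H)\ne 0$; by \Cref{rem:q.i} the Bowditch boundaries $\partial(G,\calP)$ and $\partial(H,\calP_H)$ are canonically homeomorphic. Applying \Cref{prop: Cohomology  boundary isomorphism } to both pairs yields non-zero classes $\alpha,\beta$ in $\cechH^{k-1}$ of the common boundary, and since $\partial\Phi$ is a homeomorphism, the pullback $(\partial\Phi)^{*}$ identifies them (up to the natural coefficient change between $\bbZ G$ and $\bbZ H$).

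Suppose for contradiction that the conclusion fails: there exist $H$, $(\bar K,\bar\calB)$, $\Phi$ as in the statement and $x\in X$ with $d(x,\Phi(\cyl{K,\calB}))>R_0$ for $R_0$ arbitrarily large. Because $G\actson X$ cocompactly and $X$ is locally finite, the local structure of $\cyl{X,\calA}$ near any $x\in X$ is uniformly bounded; thus, for $R_0$ exceeding a constant depending only on $(\bar X,\bar\calA)$, the complement $Y:=\cyl{X,\calA}\setminus B(x,R_0/2)$ is connected, the inclusion $\iota:Y\hookrightarrow \cyl{X,\calA}$ is proper, and it extends to the identity on $\partial(G,\calP)$ at infinity. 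As $\Phi(\cyl{K,\calB})\subseteq Y$, the map $\Phi$ factors as $\cyl{K,\calB}\xrightarrow{\Phi'}Y\xrightarrow{\iota}\cyl{X,\calA}$, so $\Phi^{*}$ on compactly-supported cohomology factors through $\iota^{*}$. A Mayer--Vietoris decomposition $\cyl{X,\calA}=B(x,R_0/2+1)\cup Y$ (using the $Z$-compactification framework of Manning--Wang to relate compactly-supported cohomology of $\cyl{X,\calA}$ to $\cechH^{k-1}(\partial(G,\calP))$) exhibits a class in $H^k_c(\cyl{X,\calA})$ locally supported near $x$ that vanishes in $H^k_c(Y)$. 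Choosing a representative of $\alpha$ with this local behaviour forces $\Phi^{*}(\alpha)=0$, contradicting the preservation of $\alpha$ under $(\partial\Phi)^{*}$.

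The main obstacle is the ``local detection'' step: one must show that the non-zero Čech class produced by \Cref{prop: Cohomology  boundary isomorphism } admits a cocycle representative whose compactly-supported dual non-trivially intersects any prescribed compact region of $\cyl{X,\calA}$. This is a local-to-global principle for $Z$-compactifications of classifying spaces of relatively hyperbolic groups, pushed through excision in the style of Bestvina--Mess but in the relative framework of Manning--Wang. Uniformity of $R_0$ in $(\bar X,\bar\calA)$ alone (independent of $H$, $K$ and $\Phi$) then follows from cocompactness of $G\actson X$, which makes the geometry near any $x\in X$ uniform in $x$.
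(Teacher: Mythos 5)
Your outline coincides with the paper's: invoke \Cref{lem:coh.dim} and \Cref{prop: Cohomology  boundary isomorphism }, pass to compactly supported cohomology of the cusped space via the Manning--Wang comparison map, and derive a contradiction from a ball missed by $\Phi(\cyl{K,\calB})$ using naturality and the fact that $\partial\Phi$ is a homeomorphism. But the step you yourself flag as the ``main obstacle'' --- producing a class localized near the prescribed point $x$ --- is left open, and the way you formulate it (a representative ``whose compactly-supported dual non-trivially intersects \emph{any} prescribed compact region'') is both stronger than what is needed and not what the argument actually uses; no Mayer--Vietoris decomposition or local-to-global principle for $Z$-compactifications is required. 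The paper closes the step as follows: fix $0\neq a\in\checkH{k-1}{\partial(G,\calP)}$; by \cite[Proposition 3.26]{manning2020cohomology} its image $i(a)\in H^k_c(\cyl{X,\calA})$ is nonzero and, simply by definition of compactly supported cohomology, is represented by a class in $H^k\bigl(\cyl{X,\calA},\,\cyl{X,\calA}\setminus B_{R_0}(x_0)\bigr)$ for \emph{some} ball $B_{R_0}(x_0)$. If $\Phi(\cyl{K,\calB})$ misses that ball, then $\Phi^*i(a)=0$, while the naturality square together with injectivity of $i$ for $(H,\calP_H)$ and the isomorphism $(\partial\Phi)^*$ gives $\Phi^*i(a)=i\bigl((\partial\Phi)^*a\bigr)\neq 0$, a contradiction. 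The arbitrary point $x$ is then handled not by ``uniform local geometry'' but by translating the class: for $g\in G$ with $gx_0$ uniformly close to $x$ (cocompactness of $G\actson X$), the class $g\cdot a$ is again nonzero and $i(g\cdot a)$ is supported in $B_{R_0}(gx_0)$, so the same contradiction applies with the same constant $R_0=R_0(X)$. This translation argument is precisely where the uniformity of $R_0$ comes from, and it is missing from your proposal rather than being a routine technicality.

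Two smaller points. First, your detour through $H^k(H,\calP_H;\bbZ H)\neq 0$ and the coefficient change between $\bbZ G$ and $\bbZ H$ is unnecessary: one only needs that the Manning--Wang map $\checkH{k-1}{\partial(H,\calP_H)}\to H^k_c(\cyl{K,\calB})$ is injective and that $(\partial\Phi)^*$ does not kill $a$, which a homeomorphism guarantees. Second, for $\Phi^*$ on compactly supported cohomology to be defined you should note that $\Phi$ is proper; this follows directly from the hypothesis that $\Phi$ extends continuously to a boundary homeomorphism, since the preimage of a compact set is a closed subset of the compactification missing the boundary.
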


\begin{proof}
We denote  $\cusped{Z}=\cyl{Z,\calD}$, whenever it is clear from context which is the relevant space. By \Cref{rem:q.i}, the Bowditch boundary $\partial(G,\calP)$ is homeomorphic to $\partial\cusped{X}$, thus, by the assumption on $\Phi$, it extends to a boundary map $\partial(H,\calP_{H}) \to \partial(G,\calP)$.

 By \Cref{lem:coh.dim} and \Cref{prop: Cohomology  boundary isomorphism } there is some $k\in\mathbb{N}$ for which 
 \begin{equation}\label{equ: dim_non_zero}\checkH{k-1}{\partial(G,\calP)}\cong H^k(G,\calP;\bbZ G)\neq 0
 \end{equation}

Moreover, by \cite[Proposition 3.26]{manning2020cohomology}, there is an isomorphism 
$$\checkH{k-1}{\partial(G,\calP)}\xrightarrow{i} H^k_c(\cusped{X})$$
achieved through a long exact sequence in cohomology.

Similarly for $K$, there is an isomorphism:
$$\checkH{k-1}{\partial(H,\calP_H)}\to H^k_c(\cusped{K})$$

Since $G$ acts on $X$ cocompactly, it is enough to show that given an arbitrary $x\in X$, there exists some $R$ such that $\Phi(\cusped{K})\cap B_{R}(x)$ is non-empty, where $B_{R}(x)=B_{R}$ is an open ball based at $x$ of radius $R$.
By the naturality of the map $i$ in the long exact sequence we have the following commutative diagram:
\begin{center}
    
\begin{tikzcd}
  \checkH{k-1}{\partial(G,\calP)} \arrow[r,hook, "i"] \arrow[d, "(\partial(\Phi))^*"]
    & H^k_c(\cusped{X}) \arrow[d, "\Phi^*"] \\
 \checkH{k-1}{\partial(H,\calP_H)} \arrow[r, hook,"i"]
& H^k_c(\cusped{K})\end{tikzcd}\end{center}
 By assumption (\Cref{equ: dim_non_zero}), there exist some 
 $0\neq a\in \checkH{k-1}{\partial\cusped{X}}$. Since $i$ is injective, the image $i(a)$ is compactly supported in $\cusped{X}$, i.e. there exists some $R_0>0$ such that $0\ne i(a)\in H^k(\cusped{X},\cusped{X}-B_{R_0})$.
 
It remains to notice that $\Phi(\cusped{K})\cap B_{R_0}\neq \emptyset$. Otherwise, we would get that $\Phi^*(i(a))=0$, which leads to a contradiction: since $a\neq 0$, and $(\partial\Phi)^*$ is an isomorphism, $a':=(\partial\Phi)^*(a)$ is also nontrivial, and so $i(a')\neq 0$, in contradiction with the commutativity of the diagram. 
\end{proof}

\begin{definition}[depth D-non-decreasing map]
Let $\Phi$ be a map $\Phi:\cyl{Y}\to\cyl{Z}$ for some $Y$, $Z$.
We say that $\Phi$ is \emph{depth $D$-non-decreasing} if there exist some constant $D$ such that for every simplex $\sigma$ in $\cyl{Y}$, $\depth(\Phi(\sigma))\ge \depth(\sigma)-D$.
\end{definition}

For the remainder of this section we fix  $(K,\calB)$ and $(X,\calA$) as in \Cref{Theorem: Quasi-suerjectivity}. 
\begin{lemma}\label{phi extends to boundary}
    If $\Phi: \cyl{K,\calB} \to \cyl{X,\calA}$ is $H$-equivariant, depth $D$-non-decreasing, and $\Phi$ restricts to a quasi-isometry on vertices, then $\Phi$ extends continuously to a map $\partial \Phi$.
\end{lemma}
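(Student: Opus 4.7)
The plan is to reduce to standard hyperbolic geometry: by \Cref{Prop: Cusped spaces are quasi-isometric q.i} the warped cylindrical cusped spaces $\cyl{K,\calB}$ and $\cyl{X,\calA}$ are quasi-isometric to hyperbolic combinatorial cusped spaces, hence are themselves $\delta'$-hyperbolic, with Gromov boundaries canonically identified with the Bowditch boundaries. Simplices have warped diameter at most $2$ (a horoball simplex between levels $n$ and $n+1$ has vertical span $1$ and $A$-direction span at most $2^{-n}\le 1$), so the vertex set is cobounded. Applying the standard extension of quasi-isometric embeddings between hyperbolic spaces to the vertex restriction of $\Phi$, we obtain a canonical continuous map $\partial\Phi\colon\partial\cyl{K,\calB}\to\partial\cyl{X,\calA}$, defined by $\partial\Phi(\xi)=\lim\Phi(v_n)$ for any sequence of vertices $v_n\to\xi$. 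The task reduces to showing that this is also the boundary extension of $\Phi$ itself.

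Concretely, I verify that $\Phi(x_n)\to\partial\Phi(\xi)$ whenever $x_n\in\cyl{K,\calB}$ converges to $\xi\in\partial\cyl{K,\calB}$. Choose a vertex $v_n$ of a simplex $\sigma_n$ containing $x_n$; then $d(x_n,v_n)\le 2$ in the warped metric, so $v_n\to\xi$ and hence $\Phi(v_n)\to\partial\Phi(\xi)$ by the vertex-level extension. It suffices to compare $\Phi(x_n)$ with $\Phi(v_n)$, which I do by case analysis according to the type of $\xi$.

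In the conical case (equivalently, $v_n$ and hence $\sigma_n$ stay at bounded depth $\le R$), the subspace $\cyl{K,\calB}_{\le R}$ consists of the core $K$ together with truncated horoballs $[0,R]\times A$, and the $H$-action on it is cocompact; so the $\sigma_n$ fall into one of finitely many $H$-orbits, and $H$-equivariance of $\Phi$ forces $\diam(\Phi(\sigma_n))$ to be uniformly bounded. Consequently $\Phi(x_n)$ and $\Phi(v_n)$ stay at bounded warped distance and have the same boundary limit. In the parabolic case, $\depth(\sigma_n)\to\infty$, so the depth-$D$-non-decreasing hypothesis gives $\depth(\Phi(\sigma_n))\to\infty$; the sufficiently deep part of $\cyl{X,\calA}$ is a disjoint union of horoball tops, so the connected set $\Phi(\sigma_n)$ eventually lies in a single horoball $C$. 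Since $\Phi(v_n)\in\Phi(\sigma_n)\cap C$ and $\Phi(v_n)\to\partial\Phi(\xi)$, the horoball $C$ must be the one whose parabolic point is $\partial\Phi(\xi)$; thus $\Phi(x_n)\in C$ has depth tending to infinity and converges to $\partial\Phi(\xi)$.

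The main obstacle is the parabolic case: a general continuous map need not preserve the small warped diameter of deep horoball simplices, so the direct distance estimate used in the conical case fails. The saving observation is that for boundary convergence only the horoball containing $\Phi(x_n)$ and its depth matter, not its precise warped distance to $\Phi(v_n)$, and these are controlled exactly by the depth-$D$-non-decreasing hypothesis together with the connectedness of $\Phi(\sigma_n)$.
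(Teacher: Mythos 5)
Your proposal is correct and follows essentially the same route as the paper's proof: compare $\Phi(x_n)$ with the image of a vertex $v_n$ of the simplex $\sigma_n$ containing $x_n$, use the vertex-level quasi-isometry to get the boundary map, handle the bounded-depth case via cocompactness of $H\actson\cyl{K}_{\le R}$ together with $H$-equivariance, and handle the deep case via the depth-$D$-non-decreasing hypothesis plus the fact that two points lying in a common horoball with depths tending to infinity have the same boundary limit (the paper cites \cite[Lemma 3.14]{manning2020cohomology} for exactly this). The only small caveat is that the horoball containing $\Phi(\sigma_n)$ may vary with $n$, so you should invoke that fact per index $n$ (as in the cited lemma) rather than for a single fixed horoball $C$; this is a phrasing issue, not a gap in the argument.
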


\begin{proof} We will need the following claims:
    \begin{claim}\label{Phi is proper}
    $\Phi$ is proper. 
    \end{claim}
    \begin{proof}
        Let $\set{k_i}$ be a sequence in $\cyl{K}$ escaping every compact subset. Assume by contradiction that $\{\Phi(k_i)\}$ does not escape every compact subset. In particular, one can find some compact set in $\cyl{X}$ containing a subsequence $(\Phi(k_{i_j})).$ 
    
        Thus, there is some $M\in \bbR$ such that  $\depth(k_{i_j})\le M$ for all $i$. 
        As $\Phi$ is depth $D$-non-decreasing, it must be that also $\depth k_{i_j}$ is bounded. Since $H$ acts cocompactly on $\cyl{K}_{\le M}$ and $\{k_{i_j}\}$ escapes every compact set, we can find sequences $\set{g_i}\subseteq H$ and $\set{k'_{i_j}}\subseteq K$ with $g_{i_j}$ escaping every compact subset of $H$, and $\{k_{i_j}'\}$ in some compact subset, such that $k_{i_j}=g_{i_j}.k_{i_j}'$. As $\Phi$ is $H$-equivariant, $\Phi(k_{i_j})=\Phi(g_{i_j}.k_{i_j}')=g_{i_j}.\Phi(k_{i_j}')$, which also escapes every compact subset. This is a contradiction.
    \end{proof}
    
    \begin{claim}[{\cite[Lemma 3.14]{manning2020cohomology}}] \label{limits behaves nicely in cyl{X}}
    Let $\{x_n\}, \{y_n\}$ be sequences in $\cyl{X,\calA}$ tending to infinity. Assume the following:
    \begin{enumerate}
        \item $x_n \to \xi , y_n \to \xi'$ for $\xi,\xi' \in \partial\cyl{X,\calA}$.
        \item For each $n$, $x_n$ and $y_n$ are in the same horoball and $$\min\{\depth(y_n)\, \depth(x_n)\}\to \infty.$$
    \end{enumerate}
    Then $\xi = \xi'$.
    \end{claim}

Let $\{k_i\}, \{k_i'\} \subseteq \cusped{K}$ be two sequences converging to the same boundary point $l \in \partial\cusped{K}$.
Passing to subsequences, we may assume that $\Phi(k_i) \to \xi$ and $\Phi(k_i') \to \xi'$ for some $\xi, \xi' \in \partial\cusped{X}$. 

Note that since $\cusped{K}$ and $\cusped{X}$ are respectively dense in $\cusped{K}\cup\partial\cusped{K}$ and $\cusped{X}\cup\partial\cusped{X}$, to show that $\Phi$ extends continuously to a map $\partial\Phi$, it suffices to show that $\Phi(k_i)$ and $\Phi(k_i')$ converge to the same point in $\cusped{X}$.
This would imply that defining $\partial\Phi(l) := \lim \Phi(k_i)$ yields a well-defined map, and that $\Phi$ extends continuously to the boundary.

Suppose $\Phi(k_i)\to\xi$. 
For each j, let $v_j$ be a vertex in a simplex $\sigma_j$ containing $k_j$, and consider the sequence $\Phi(v_i)$. Assume first that $\depth (k_i)$ is bounded by some $M\in\bbR$.
Since $\cusped{K}$ is finite dimensional, the diameter of simplices in $\cusped{K}$ is uniformly bounded. The action $H\actson \cusped{K}_{\leq M}$ is cocompact and $\Phi$ is $H$-equivariant, so also $d(\Phi(v_i),\Phi(k_i))$ is bounded. We conclude that $\Phi(v_i)$ tends to $\xi$.
Since $\Phi$ is a quasi-isometry on vertices, $\Phi$ can be extended to a boundary map when restricted to the $0$-skeleton, then by repeating the process for $k_i'$, we conclude that $\Phi(k_i), \Phi(k_i')$ have the same limit point in $\cusped{X}$.

Suppose now that $\Phi(k_i)$ has unbounded depth. 
Then, up to passing to subsequences, we also have $\depth(k_i)\to \infty$, and thus also $\depth(v_i)\to\infty$.
So, also the simplex $\sigma_i$ whose depth is $\depth(\sigma_i)=[a_i,b_i]$, has $a_i\to \infty$.
In particular, for all but finitely many $i$, $v_i$ and $k_i$ are in the same horoball.
Since $\Phi$ is depth $D$-non decreasing, it must be that for all but finitely many $i$, $\Phi(v_i)$ and $\Phi(k_i)$ are in the same horoball (as otherwise, the images of simplices of $\cusped{K}$ would have vertices at two distinct horoballs and at unbounded depth, and that would mean that $\Phi$ cannot be depth $D$-non decreasing). 
Thus, by \Cref{limits behaves nicely in cyl{X}}, they have the same limit point. 
\end{proof}

\section{Construction of quasi-surjective map}\label{Section : construction of q.s map}
Let $G$ be hyperbolic relative to the collection $\calP$ of finitely generated subgroups of type-F, and let $H\leqslant G$ be a subgroup of finite index with classifying spaces $( \bar K, \bar \calB)$ and $(\bar X, \bar \calA)$ for $(H,\calP_H)$ and $(G,\calP)$ respectively.

In this section, we construct a map $\Phi: \cyl{K}\to\cyl{X}$ on which we can apply \Cref{Theorem: Quasi-suerjectivity}.
Since $\cyl{X,\calA}$ is hyperbolic, following \cite{bridson2013metric}, we can construct a corresponding contractible Rips-complex $Y:=R_D(\cyl{X})$ (see \S\ref{Subsec: cylindrical cusped space}).

\begin{theorem}\label{quasisurjectivity upgraded}
    There exist $\kappa, R$ that depend only on $X$, such that for every $H\leqslant G$ of finite index we have that every $H$-equivariant map $\Phi^0: K^0 \to X^0$, $\Phi^0$ extends to $\Phi:\cyl{K} \to \cyl{X}$ such that:
    \begin{enumerate}
        \item $\Phi$ sends edges to $\kappa$-quasigeodesics.
        \item $X \subseteq N_R(\Phi(K^1))$.
    \end{enumerate}
\end{theorem}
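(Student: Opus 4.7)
The strategy is to build $\Phi$ so that it satisfies the hypotheses of \Cref{Theorem: Quasi-suerjectivity}, and then upgrade its conclusion from $\Phi(\cyl{K})$ down to $\Phi(K^1)$. Fix $D$ large enough that the Rips complex $Y := R_D(\cyl{X})$ is sufficiently connected for the depth-preserving inductive construction of \Cref{lem: Homotopy equivalence} to go through (using hyperbolicity of $\cyl{X}$ together with contractibility of horoballs and bounded-depth slices). First I extend $\Phi^0$ $H$-equivariantly to all vertices of $\cyl{K}$ in a depth-preserving way: each horoball vertex $(v,n)\in\cyl{K}$ is sent to a vertex at depth $n$ in the horoball of $\cyl{X}$ over the peripheral subgroup of $G$ to which the $H$-stabilizer of $v$ is conjugate. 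Then I extend inductively over higher-dimensional simplices, obtaining an $H$-equivariant continuous map $\tilde\Phi:\cyl{K}\to Y$ whose image of each simplex has controlled depth, by the same induction as in the proof of \Cref{lem: Homotopy equivalence} but run in reverse. Set $\Phi := h\circ\tilde\Phi$, where $h:Y\to \cyl{X}$ is the map from \Cref{lem: Homotopy equivalence}.

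By construction, every edge of $\cyl{K}$ is sent by $\tilde\Phi$ to an edge of $Y$ and then by $h$ to a path in $\cyl{X}$ of length bounded uniformly in terms of $X$ and $D$. Since $K$ has finitely many $H$-orbits of edges and $\Phi^0$ is $H$-equivariant, the distances $d_{\cyl{X}}(\Phi^0(v),\Phi^0(v'))$ between endpoints of edges in $K^1$ are uniformly bounded. These observations combined give property (1): $\Phi$ sends edges of $K^1$ to $\kappa$-quasigeodesics for $\kappa = \kappa(X)$. Moreover $\Phi$ is depth $D'$-non-decreasing for some $D' = D'(X)$ and restricts on vertex sets to a quasi-isometry with uniform constants, so \Cref{phi extends to boundary} produces a continuous extension $\partial\Phi$.

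The map $\partial\Phi$ is meant to realize the canonical identification $\partial(H,\calP_H)\cong\partial(G,\calP)$ coming from the finite-index inclusion. To see that it is a homeomorphism, I build a map $\Psi:\cyl{X}\to\cyl{K}$ by the symmetric Rips-complex procedure with a $G$-equivariant vertex map, check that $\Psi\circ\Phi$ and $\Phi\circ\Psi$ are at bounded distance from the respective identity maps, and invoke uniqueness of continuous boundary extensions of coarse maps in the hyperbolic setting to conclude $\partial\Psi = (\partial\Phi)^{-1}$. Thus \Cref{Theorem: Quasi-suerjectivity} applies and yields $X\subseteq N_{R_0}(\Phi(\cyl{K}))$ for some $R_0 = R_0(X)$.

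To upgrade to property (2), let $x\in X$ and choose $y\in\cyl{K}$ with $d_{\cyl{X}}(\Phi(y),x)\leq R_0$. Since $\Phi$ is depth $D'$-non-decreasing and $\depth(x) = 0$, we have $\depth(y)\leq R_0 + D'$. In the warped metric on $\cyl{K}$, a vertical descent from depth $\leq R_0 + D'$ to depth $0$ has length $\leq R_0 + D'$, so $y$ lies within bounded warped-distance of a vertex $v\in K^0\subseteq K^1$. A coarse Lipschitz estimate for $\Phi$, obtained from property (1) applied along a short edge path from $v$ up to $y$, yields $d_{\cyl{X}}(\Phi(y),\Phi^0(v))\leq R_1$ for some $R_1 = R_1(X)$, whence $d_{\cyl{X}}(x,\Phi^0(v))\leq R_0 + R_1 =: R$. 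I expect the hardest step to be verifying that $\partial\Phi$ is a homeomorphism with uniform control, since this is both where the structural content about boundaries of commensurable relatively hyperbolic groups enters and where uniformity in $H$ must be tracked carefully throughout the Rips-complex construction.
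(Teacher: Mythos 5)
Your overall architecture (extend $\Phi^0$ over $\cyl{K}$ through the Rips complex $Y=R_D(\cyl{X})$, compose with $h$, extend to the boundary, apply \Cref{Theorem: Quasi-suerjectivity}, then upgrade to the $1$-skeleton of $K$) matches the paper, but two of your quantitative claims fail, and the second one is where the theorem actually lives. First, it is not true that ``every edge of $\cyl{K}$ is sent by $\tilde\Phi$ to an edge of $Y$ and then by $h$ to a path of length bounded uniformly in terms of $X$ and $D$.'' The map $\Phi^0$ is an \emph{arbitrary} $H$-equivariant map and $K$ is an arbitrary relative classifying space for $(H,\calP_H)$, so the displacement $d_{\cyl{X}}(\Phi^0(e_-),\Phi^0(e_+))$ of an edge of $K$ is bounded only in terms of $H$, $K$ and $\Phi^0$ --- not in terms of $X$ --- and in particular need not be at most $D$, so adjacent vertices need not span a Rips edge at all. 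The conclusion of (1) is still correct, but for a different reason: one sends each edge to a geodesic of $Y$ (of uncontrolled length) and uses that $h$ is a quasi-isometry on $Y^{(1)}$ with constants depending only on $X$, so images of edges are $\kappa(X)$-quasigeodesics even though their lengths are not bounded in terms of $X$.

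This uncontrolled per-edge displacement is exactly what breaks your upgrade step for (2): the ``coarse Lipschitz estimate for $\Phi$'' with constants depending only on $X$ does not exist, so knowing that $y$ is within $R_0+D'+O(1)$ of a vertex $v\in K^0$ in $\cyl{K}$ gives no $X$-uniform bound on $d_{\cyl{X}}(\Phi(y),\Phi^0(v))$. Moreover $y$ may lie in the interior of a top-dimensional simplex, and your construction never controls where the fillings of higher simplices go; the paper has to choose the fillings inside a uniform neighborhood of $\tilde\Phi(\cyl{K}^{(1)})$ (via \cite[Lemma 2.3]{manning2020cohomology} / the Rips filling argument), which is what turns $X\subseteq N_{R_0}(\Phi(\cyl{K}))$ into $X\subseteq N_{R_0}(\Phi(\cyl{K}^{(1)}))$. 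The remaining passage from the $1$-skeleton of $\cyl{K}$ to the $1$-skeleton of $K$ is then done not by a Lipschitz bound but by geometry in $\cyl{X}$: if $x$ is close to the image of a horizontal edge at depth $t$, one compares the corresponding quasigeodesic (Morse lemma) with the geodesic quadrilateral on $\Phi(w,t),\Phi(w',t),\Phi(w),\Phi(w')$ and uses thinness plus vertical projection to land near $\Phi$ of the depth-zero edge $\{w,w'\}$ of $K$. Without the neighborhood-of-the-$1$-skeleton property and this thin-quadrilateral step, your argument cannot produce an $R$ depending only on $X$, which is the whole point of the statement. (A minor further point: phrasing the vertex extension in terms of ``the peripheral subgroup to which the $H$-stabilizer of $v$ is conjugate'' is vacuous here, since the actions are free; the horoball to use is dictated by the peripheral subcomplex of $\calB$ containing $v$.)
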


\begin{proof}
The outline is as follows. We start with a map $\cyl{K}\to Y$, the image of which is $\kappa$-close to geodesics (using \cite[Lemma 2.3]{manning2020cohomology}), we then project its image to $\cyl{X}$ and apply \Cref{Theorem: Quasi-suerjectivity}. Finally, we show that $X$ can be found in a neighborhood of the image of $\Phi$ when restricted to the $1$-skeleton $K^{(1)}$. 

 \textbf{Step 1.} We construct $\tild \Phi : \cyl{K} \to Y$, by extending $\Phi^0$ to a $H$-equivariant continuous map $\tild{\Phi}:\cyl{K} \to Y$.
 
 By induction on the dimension of simplices: first, we define $\tild \Phi$ on vertices of $\cyl{K}$ to be $\tild{\Phi}((v,t))=(\Phi(v),t)$.
 For every edge $e\in \cyl{K}^1$ define $\tild{\Phi}|e$ to be a geodesic in $Y$ connecting $\Phi(e_-)$ and $\Phi(e_+)$ in a $H$-equivariant way.
 Assume $\tild{\Phi}$ was defined on $K^{(n-1)}$ for $n>1$. Since $Y$ is contractible we can define $\tild{\Phi}|_{K^{(n)}}$ for an $n$-simplex $\sigma$ to be a filling disk of the $(n-1)$-simplex $\tild{\Phi}|_{\partial\sigma}$ in $Y$, then extend $H$-equivariantly. Since horoballs are uniformly quasi-convex, we can choose this filling disk in a depth $D$-non-decreasing way.
 Moreover, since $Y$ is a $\boldsymbol\delta$-hyperbolic space, and since $\cyl{K}$ is finite dimensional simplicial complex, it follows from the proof of \cite[III.$\Gamma$ proposition 3.23]{bridson2013metric} that this filling disk can be chosen in some $\boldsymbol \delta$-neighborhood of $\tild{\Phi}(\cyl{K}^{(1)})$ (see \cite[Lemma 2.3]{manning2020cohomology}).
 So we can assume:

$$\tild{\Phi}(\cyl{K})\subseteq\calN_{\boldsymbol{{\delta}}}(\tild{\Phi}(\cyl{K}^{(1)}))$$

To sum-up this step, we have constructed a continuous $H$-equivariant depth $D$-non-decreasing map $\tild\Phi$, such that for all $t\in \bbN$ and for all $v\in K^0$, $\tild \Phi(v,t) = (\tild\Phi(v),t)$, $\tild \Phi$ sends edges to geodesics and $\tild \Phi(\cyl{K})$ is in a $\boldsymbol{\delta}$-neighborhood of the image of the $1$-skeleton $\tild \Phi(\cyl{K}^{(1)})$.

\textbf{Step 2.} We construct a continuous $H$-equivariant map $\Phi: \cyl{K} \to \cyl{X}$ by composing $\Phi = h \circ \tild \Phi$, where $h:Y\to\cyl{X}$ is the map constructed in \Cref{lem: Homotopy equivalence}.

Notice first that $h$ is continuous depth $D$-non-decreasing, $H$-equivariant, and so the composition $h \circ \tild \Phi$ also exhibits these properties.
Moreover, $h$ restricts to the identity on vertices, and maps edges to quasigeodesics.
Therefore, the composition $h \circ \tild \Phi$ sends edges to $\kappa$-quasigeodesics, and for all $t\in \bbN$ and for all $v\in K^0$, $\Phi(v,t) = (\Phi(v),t)$. 

Let $R_0\ge 0$. The preimage of $\Phi(\cyl{K})_{\leq R_0}$ by $h$ is at some bounded depth $R_0'$ in $Y$ (otherwise, $h$ could not have been depth $D$-non-decreasing).
As $G$ acts cocompactly on $Y_{\leq R_0'}$, and $h$ is $G$-equivariant, the filling of $k$-simplices in $Y_{\leq R_0'}$, is mapped by $h$ to $\Phi(\cyl{K})_{\leq R_0}$ and is controlled by some constant depending only on $X$ and the depth $R_0$.
We conclude that $$\Phi(\cyl{K})_{\leq R_0} \subseteq N_{\boldsymbol{\delta}_{R_0}}(  \Phi(\cyl{K}^{(1)})).$$
  
\textbf{Step 3.} We show that there exists $R_0=R_0(X)$ such that $$X\subseteq  N_{R_0}(\Phi(\cyl{K}^{(1)})):$$

By \Cref{phi extends to boundary}, $\Phi$ extends continuously to a map $\partial \Phi$. 
It now follows from \Cref{Theorem: Quasi-suerjectivity} that there exists some $R_0=R_0(X)$ such that $X\subseteq N_{R_0}(\Phi(\cyl{K}))$.
The desired conclusion then follows from the previous step by noticing that also $X\subseteq N_{R_0}(\Phi(\cyl{K})_{\leq R_0})$.

\textbf{Step 4.} Finally, we show that there exist $R=R(X)$, such that $$X\subseteq N_{R}(\Phi(K^{(1)})).$$

     Let $x\in X$. 
     By Step 3, there exists some $v\in \Phi(\cyl{K}^{(1)})$ such that $x\in B_{R_0}(v)$.
     If $v=\Phi((w,t))$ for some $w\in \cyl{K^0}$, then by the construction of $\Phi$,  $v$ is in the vertex set of $\cyl{X}$, and the projection of $v$ onto $X$, namely $\Phi(w)$, is also in $B_{R_0}(v)$. 
     Next, if $v$ is on the image of a vertical edge then $v$ is on a vertical geodesic, and the vertical projection of $v$ onto $X$ is also in $\Phi(\cyl{K})$. 
     Assume now that $v$ is on the image of a horizontal edge, i.e. on some quasigeodesic. By the Morse lemma, each such quasigeodesic remains within a uniformly bounded distance $M$ of a geodesic $\gamma_t:=\geod{\Phi(w,t),\Phi(w',t)}$. 
     Again, by construction, it must be that the vertical projections onto $X$ of $w_t:=\Phi(w,t)$, $w'_t:=\Phi(w',t)$ are $w_0:=\Phi(w)$ and $w'_0:=\Phi(w')$ respectively. 
     We want to find some $v'$ on the geodesic $\gamma_0=\geod{w_0,w'_0}$ that is also in $B_R$ for some $R=R(X)$. 
     Consider the $2\delta$-thin rectangle formed by $w_t, w'_t, w_0, w'_0$. 
     We know that $v$ is in a $2\delta$ neighborhood of either $\gamma_0$, $\geod{w_0, w_t}$ or $\geod{w'_0,w'_t}$. 
     Thus, it is at a distance $\leq R_0+2\delta$ from either $w_0$, $w'_0$ or at a distance $2\delta$ from some element in $\gamma_0$. 
     At any case, setting $R:=R_0+2\delta$+$2M$, we have shown that \begin{equation}\label{equ:q.s}
        X\subseteq  N_R(\Phi(\cyl{K})^{(1)})\implies X\subseteq N_{R}(\Phi(K^{(1)})). 
     \end{equation} 
     and we have shown in the previous step that $\Phi$ satisfies the antecedent of \Cref{equ:q.s}.
\end{proof}

\section{Volume vs Complexity}\label{Section: proof of main}
\subsection{Proof of \Cref{Theorem: Main}}
Let $(\bar{X},\bar{\calA})$ be a relative classifying space for the pair $(G,\calP)$, and $(X,\calA)$ the respective universal cover.
To prove \Cref{Theorem: Main}, we will show the following: $$ \frac{1}{\bdelta}[G:H]\leq \bfw(\bar{\calF}_{\le \bdelta})\leq \boldsymbol\delta\cdot\C(H,\calP_H)$$

By \cite{dahmani2003classifying} we know that if a group $G$ is torsion-free and hyperbolic relative to a finite collection $\calP$ of type-F subgroups, then $G$ is of type-F and $\C(G,\calP)$ is finite.
 
\begin{proof}
Let $K$ be an aspherical simplicial complex, such that $\vol{K/H} =\C(H,\calP_H)$ and let $L=K^{(2)}$.
By \Cref{prop;upper bound}, there exist a $G$-equivariant map $\Phi_0:L^0\to X^0$ and an associated resolution $(\calF_{\le R_0},\,\bfw)$, such that for every $R_0$ there exists a constant $\delta = \delta(X,R_0)$ such that \begin{equation}\label{equ:W<C}
\bfw (\bar{\calF}_{\le R_0})\leq \delta\cdot \vol{\bar{L}}\leq\delta\cdot\C(H,\calP_H)
\end{equation}

By \Cref{quasisurjectivity upgraded}, we can extend $\Phi_0$ to a $H$-equivariant map $\Phi:\cyl{K}\to\cyl{X}$ such that there exists some $R=R(X)$ with 

\begin{equation}\label{equ:d-neigh}
    X\subseteq N_R(\Phi(K^{(1)}))
\end{equation}

By \Cref{Prop: Cusped spaces are quasi-isometric q.i} there exists a quasi-isometry $\iota:\cyl{X}\to\comb{X}$.
Denote by $\Upsilon$ the map $\iota\circ\Phi$. 

Fix a point $x\in X$. By \Cref{equ:d-neigh}, there exists some edge $e$ in $K$ such that $x$ lies within distance $R$ of $\Phi(e)$.
Recall that $\Phi$ maps edges to $\kappa$-quasigeodesics with $\kappa=\kappa(X)$. Since $\iota$ is quasi-isometry, the composition $\iota\circ\Phi$ sends edges to uniform quasi-geodesics with constant depending only on $X$.
By the Morse lemma, each such quasi-geodesic remains within a uniformly bounded distance of a geodesic in $\comb{X}$. 
So, we can assume that  $R$ was chosen to be big enough such that there is an $R$-neighborhoood of $x$, in which we can find some point $z$ contained in a geodesic $[\Upsilon(e_-),\Upsilon(e_+)]$ such that $\Upsilon(e_-)\neq \Upsilon(e_+)$ for some $e\in K$. Moreover, by \Cref{cor : <1/lambda} there exist $\rho=\rho(X)$ and $\lambda=\lambda(X,R)$,  and an edge $f$ in $B_\rho(z)$ whose coefficient in $q(e)$ is at least $1/\lambda$.
Let $R_0 = R+\rho$.
Since $R, \rho$ depend only on $X$, so do $R_0,\;\lambda$ and $\delta$ (of \eqref{equ:W<C}).
The above discussion shows that in $B_{R_0}(x)$ there exists an edge $f$ whose coefficient in some $q(e)$ is at least $1/\lambda$.

There are at least $[G:H]$ vertices in $X/H$. 
Denote by $\alpha$ the number of vertices in a ball in $\comb{X}$ of radius $2R_0$ around a vertex $x\in X$.
Then, there are at least $\frac{1}{\alpha}[G:H]$ disjoint ${R_0}$-balls in $X/H$. 
Each such ball gives rise to (at least) one track with weight of at least $1/\lambda$ in $\bar \calF_{\le R_0}$.
Therefore, $$\frac{1}{\lambda \alpha}[G:H] \le \bfw(\bar \calF_{\le R_0}) \le \delta \C(H,\calP_H).$$ 
This completes the proof of the theorem.
\end{proof}

\subsection{Finite index rigidity}
Finally, we provide complete proofs for \Cref{thm:fin-ind}, \Cref{thm:nrh}, and \Cref{cor:list}.

\begin{namedtheorem}[Theorem \ref{thm:fin-ind}]
Let a group $G\not\simeq \bbZ$ be torsion-free, hyperbolic relative to type-F proper subgroups $\calP$.
    Then, if $H,H'$ are finite index subgroup of $G$ such that $(H,\calP_H)\simeq (H',\calP_{H'})$ then $[G:H]=[G:H']$.
\end{namedtheorem}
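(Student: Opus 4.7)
The strategy, following Reznikov, is to extract from the complexity function $\C$ a positive multiplicative invariant of group pairs. For a relatively hyperbolic pair $(K,\calQ)$, set
\[
\Omega(K,\calQ) := \liminf \frac{\C(K',\calQ_{K'})}{[K:K']},
\]
where the liminf is taken over finite-index subgroups $K' \leqslant K$ with their induced peripheral structures. The statement then follows by verifying three properties of $\Omega$: (i) it is an isomorphism invariant of group pairs, (ii) it is index-multiplicative under finite-index inclusions, i.e.\ $\Omega(H,\calP_H) = [G:H] \cdot \Omega(G,\calP)$, and (iii) it is strictly positive (and finite).

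Property (i) is immediate, since an isomorphism $(H,\calP_H) \simeq (H',\calP_{H'})$ induces an index-preserving bijection between the finite-index subgroups that matches their induced peripheral structures, so the corresponding complexity ratios and their liminfs coincide. For property (ii), using $[G:H'] = [G:H]\cdot[H:H']$ whenever $H' \leqslant H$ has finite index in $H$, the claim reduces to showing
\[
\liminf_{H' \leqslant H,\,[H:H']<\infty} \frac{\C(H',\calP_{H'})}{[G:H']} = \Omega(G,\calP).
\]
The inequality $\geq$ is trivial since every finite-index subgroup of $H$ is also finite-index in $G$. For the reverse, given any finite-index $K \leqslant G$, I take $H' := K \cap H \leqslant_{fi} H$, and apply the easy inequality \eqref{easy inequality} to the inclusion $K \cap H \leqslant K$ to obtain
\[
\C(K \cap H,\calP_{K \cap H}) \leqslant [K:K \cap H]\cdot\C(K,\calP_K),
\]
which after dividing by $[G:K \cap H] = [K:K\cap H]\cdot[G:K]$ gives $\C(K \cap H,\calP_{K\cap H})/[G:K \cap H] \leqslant \C(K,\calP_K)/[G:K]$. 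Here one must also check that the peripheral structure induced on $K\cap H$ via $K$ agrees with the one induced directly from $G$, which follows from the transitivity of the double-coset conjugation description of $\calP_{K\cap H}$.

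Property (iii) is the main content of the paper and is supplied by Theorem \ref{Theorem: Main} together with inequality \eqref{easy inequality}, giving $\alpha \leqslant \Omega(G,\calP) \leqslant \beta$. Combining (i)--(iii), an isomorphism $(H,\calP_H) \simeq (H',\calP_{H'})$ yields
\[
[G:H]\cdot\Omega(G,\calP) = \Omega(H,\calP_H) = \Omega(H',\calP_{H'}) = [G:H']\cdot\Omega(G,\calP),
\]
and dividing by the positive real $\Omega(G,\calP)$ gives $[G:H]=[G:H']$. The principal obstacle, the positivity of $\Omega(G,\calP)$, has already been discharged by Theorem \ref{Theorem: Main}. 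The one remaining subtlety is that Theorem \ref{Theorem: Main} requires $G$ to be one-ended, while Theorem \ref{thm:fin-ind} only excludes $G \simeq \bbZ$: in the non-one-ended case, a torsion-free relatively hyperbolic group splits as a non-trivial free product by Stallings' theorem, and positivity of $\Omega$ must be established separately along such a splitting (e.g.\ using a Grushko-type argument combined with Theorem \ref{Theorem: Main} applied to the one-ended free factors).
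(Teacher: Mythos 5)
Your argument coincides with the paper's proof in its core: both define the Reznikov-style invariant $\underline{\C}(G,\calP)=\liminf \C(H,\calP_H)/[G:H]$, deduce $0<\alpha\le\underline{\C}(G,\calP)\le\beta<\infty$ from \Cref{Theorem: Main} together with inequality \eqref{easy inequality}, and conclude from multiplicativity that $[G:H]\cdot\underline{\C}(G,\calP)=[G:H']\cdot\underline{\C}(G,\calP)$. Your verification of multiplicativity via $K\cap H$ and the covering-space inequality \eqref{easy inequality}, including the remark that the peripheral structure induced on $K\cap H$ through $K$ agrees with the one induced directly from $G$, is a correct filling-in of a step the paper leaves implicit, and your isomorphism-invariance argument is fine since conjugating peripherals does not change the isomorphism type of a pair.

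The genuine gap is the multi-ended case. You rightly notice that \Cref{Theorem: Main} assumes $G$ one-ended while the statement only excludes $G\simeq\bbZ$, but your proposed remedy --- ``a Grushko-type argument combined with \Cref{Theorem: Main} applied to the one-ended free factors'' --- is a sketch, not a proof, and it is not routine: a torsion-free multi-ended $G$ splits as a nontrivial free product, yet the free factors may themselves be $\bbZ$ or again multi-ended, finite-index subgroups decompose by Kurosh into conjugates of subgroups of the factors plus an additional free factor, the peripherals are scattered among the factors, and one would need to control how $\C(\cdot,\cdot)$ behaves under all of this to get positivity of $\Omega(G,\calP)$; none of that is carried out. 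The paper disposes of this case in one line by citing \cite{sykiotis2018complexity}: groups with infinitely many ends are finite index rigid outright, which is stronger than the pair statement needed, so one may assume $G$ is one-ended and then run exactly the argument you give. As written, your proof is complete only for one-ended $G$; to close it you should either invoke that result, as the paper does, or genuinely execute the free-product analysis you gesture at.
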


\begin{proof}
If $G$ is multi-ended then it is finite index rigid by \cite{sykiotis2018complexity}.
We may thus assume that $G$ is one-ended.
Following \cite{reznikov1995volumes}, we can define: $$\underline{\C}(G,\calP):= \liminf_{H\leq G, [G:H]<\infty} \frac{\C(H,\calP_H)}{[G:H]}.$$
This group pair invariant is multiplicative in the following sense:
$$\underline{\C}(H,\calP_H) = [G:H]\cdot\underline{\C}(G,\calP).$$
If $H,H'\le G$ are finite index subgroups of $G$ such that $(H,\calP_H) \simeq (H',\calP_{H'})$ then
 \begin{equation}\label{long equality} [G:H]\cdot\underline{\C}(G,\calP)=\underline{\C}(H,\calP_H)=\underline{\C}(H',\calP_{H'})=[G:H']\cdot\underline{\C}(G,\calP)
 \end{equation}
By \Cref{Theorem: Main} and the inequality \eqref{easy inequality}, $0<\alpha\le \underline{\C}(G,\calP) \le \beta <\infty$. Hence,  $[G:H]=[G:H']$.
\end{proof}



Recall that we say that an infinite finitely generated group $G$ is \emph{NRH} if it is not hyperbolic relative to any collection of proper subgroups.

\begin{namedtheorem}[\Cref{thm:nrh}]
  Let a group $G\not\simeq\bbZ$ be torsion-free, hyperbolic relative to type-F and NRH proper subgroups $\calP$. Then $G$ is finite index rigid.
\end{namedtheorem}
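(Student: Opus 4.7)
The plan is to reduce \Cref{thm:nrh} to \Cref{thm:fin-ind} by verifying that, under the NRH hypothesis on the peripherals $\calP$, any abstract isomorphism $\phi:H\to H'$ between finite-index subgroups $H,H'\leqslant G$ automatically upgrades to an isomorphism of pairs $(H,\calP_H)\simeq (H',\calP_{H'})$. Once this is known, \Cref{thm:fin-ind} immediately gives $[G:H]=[G:H']$ and hence finite index rigidity of $G$.

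First I would unpack the induced peripheral structures. Each element of $\calP_H$ is, by construction, of the form $H\cap gPg^{-1}$ for some $g\in G$ and $P\in\calP$, and since $H$ has finite index in $G$, such an intersection is a finite-index subgroup of the conjugate $gPg^{-1}\cong P$. In particular each $Q\in\calP_H$ is quasi-isometric to a member of $\calP$, and the analogous statement holds for $\calP_{H'}$. The property of being NRH is a quasi-isometry invariant among finitely generated infinite groups (Drutu, using the QI-invariance of relative hyperbolicity with NRH peripherals), so every member of $\calP_H$ and of $\calP_{H'}$ is itself NRH and of type F. Thus both $(H,\calP_H)$ and $(H',\calP_{H'})$ are torsion-free relatively hyperbolic pairs whose peripherals are type-F and NRH.

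The core step is then the following canonicality statement: for a finitely generated group $\Gamma$ that is hyperbolic relative to a collection of NRH proper subgroups $\calQ$, the collection $\calQ$ is uniquely determined up to conjugacy in $\Gamma$ (this is a consequence of the work of Drutu--Sapir on the rigidity of peripheral structures with NRH peripherals, cf.\ also Behrstock--Drutu--Mosher). Applying this to $H'$: the pushforward peripheral collection $\phi(\calP_H)=\{\phi(Q)\mid Q\in\calP_H\}$ gives $H'$ the structure of a relatively hyperbolic group with NRH peripherals, and so does the intrinsic collection $\calP_{H'}$. Canonicality forces a bijection $\pi:\calP_H\to\calP_{H'}$ such that for every $Q\in\calP_H$, $\phi(Q)$ is conjugate in $H'$ to $\pi(Q)$. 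This is exactly the definition of an isomorphism of pairs in \Cref{defn:pair}, so $(H,\calP_H)\simeq (H',\calP_{H'})$.

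Having secured the isomorphism of pairs, I invoke \Cref{thm:fin-ind} to conclude $[G:H]=[G:H']$, proving that $G$ is finite index rigid. The one caveat to handle is the degenerate case where some $Q\in\calP_H$ might fail to be infinite or might be virtually $\bbZ$: infiniteness is clear since the elements of $\calP$ are themselves NRH (so in particular not finite, and not $\bbZ$ by hypothesis once we exclude $G\simeq\bbZ$), and their finite-index subgroups inherit this. The main obstacle of the argument is precisely the invocation of the canonicality of NRH peripheral structures — one has to make sure the formulation in the literature applies verbatim in the torsion-free, type-F setting we are in, and that QI-invariance of NRHness is applied correctly to pass between the peripherals of $(G,\calP)$ and their finite-index intersections with $H$. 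Everything else is bookkeeping.
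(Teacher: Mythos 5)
Your proposal is correct and follows the same overall reduction as the paper: both arguments deduce \Cref{thm:nrh} from \Cref{thm:fin-ind} by showing that any abstract isomorphism $\phi:H\to H'$ between finite-index subgroups automatically respects the induced peripheral structures, using that the elements of $\calP_H$, $\calP_{H'}$ are finite-index in conjugates of elements of $\calP$ and hence NRH by quasi-isometry invariance \cite{drutu2009relatively}. The difference is in how the key step is executed. The paper proves peripheral-preservation by hand: each $P\in\calP_H$ is quasiconvex, hence undistorted \cite[Lemma 5.4]{osin2006relatively}, so $\phi(P)$ is an undistorted NRH subgroup of $H'$ and is therefore conjugate into some element of $\calP_{H'}$ by \cite[Corollary 4.7]{behrstock2009thick}; a back-and-forth with $\phi^{-1}$ together with the fact that distinct conjugates of infinite peripheral subgroups cannot be strictly nested \cite[Proposition 2.36]{osin2006relatively} upgrades the containments to equalities and yields the bijection $\pi$ required by \Cref{defn:pair}. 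You instead invoke, as a black box, a uniqueness (``canonicality'') theorem for relatively hyperbolic structures with NRH peripherals, applied to the intrinsic structure $\calP_{H'}$ and the pushed-forward structure $\phi(\calP_H)$ on $H'$ (the latter is indeed a relatively hyperbolic structure, since $\phi$ is an isomorphism of pairs from $(H,\calP_H)$). This is mathematically equivalent --- the uniqueness statement is proved by exactly the ingredients the paper assembles, and your packaging spares you from checking undistortedness explicitly --- but your citation (``Dru\c{t}u--Sapir, cf.\ Behrstock--Dru\c{t}u--Mosher'') would need to be pinned down to a precise statement, which is what the paper effectively does by quoting \cite{behrstock2009thick} and \cite{osin2006relatively} directly. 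Your closing caveat about degenerate peripherals is not an issue: NRH groups are infinite by definition, and infinite peripheral subgroups of a relatively hyperbolic pair are almost malnormal, so no element of $\calP_H$ can be all of $H$ or finite.
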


\begin{proof}
Because of \Cref{thm:fin-ind}, it suffices to argue that an isomorphism between finite-index subgroups $H,H'$ of $G$ is necessarily an isomorphism between the pairs $(H,\calP_H)$ and $(H',\calP_{H'})$. The elements of $\calP_H$ are (isomorphic to) finite-index subgroups of elements of $\calP$, and therefore they are also NRH. This is because being NRH is even a quasi-isometry invariant \cite{drutu2009relatively}. Fix now any isomorphism $\phi:H\to H'$. Consider also any $P\in\calP_{H}$. Note that $P$ is undistorted, since it is quasiconvex by \cite[Lemma 5.4]{osin2006relatively}, and hence so is $\phi(P)$. Therefore, since $\phi(P)$ is NRH, \cite[Corollary 4.7]{behrstock2009thick} implies that $\phi(P)$ is contained in a conjugate $P'$ of some element $P'$ of $\calP_{H'}$. Applying the same argument to $\phi^{-1}$ and $P'$, we see that $P=\phi^{-1}(\phi(P))<\phi^{-1}(P')<P''$ for some conjugate $P''$ of some element of $\calP_H$. This forces $P=P''$, and hence all containments to be equalities, because distinct conjugates of infinite peripheral subgroups cannot be strictly contained in each other, see \cite[Proposition 2.36]{osin2006relatively}.

To sum up, for all $P\in\calP_H$ there exists $\pi(P)\in \calP_{H'}$ such that $\phi(P)$ is conjugate to $\pi(P)$. Note that $\pi:\calP_H\to\calP_{H'}$ needs to be a bijection, with inverse constructed in the same way except starting with $\phi^{-1}$. This concludes the proof.
\end{proof}

\begin{namedtheorem}[\Cref{cor:list}]
The following are finite-index rigid:
\begin{enumerate}
\item fundamental groups of complete finite-volume manifolds of pinched negative curvature,
\item non-abelian limit groups,
    \item torsion-free group hyperbolic relative to nilpotent subgroups,
    \item free-by-cyclic groups with respect to an exponentially growing automorphism.
\end{enumerate}
\end{namedtheorem}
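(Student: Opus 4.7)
The plan is to verify, for each of the four classes, that the group falls under the hypotheses of \Cref{thm:nrh}: that is, that it is torsion-free, not isomorphic to $\bbZ$, and hyperbolic relative to a (possibly empty) collection of proper subgroups which are simultaneously of type~F and NRH. Note that the empty peripheral collection is allowed, since a torsion-free non-elementary hyperbolic group is hyperbolic relative to the trivial collection, and in the elementary case the group is either trivial or $\bbZ$ (the excluded case). The verification in each class is by invoking known structural theorems, together with the fact that non-virtually-cyclic finitely generated nilpotent groups are NRH (Drutu--Sapir, already quoted in the introduction), and that being NRH is a quasi-isometry invariant. Note that every finitely generated nilpotent group is of type~F (it admits a compact aspherical nilmanifold as classifying space, or one invokes the standard fact that torsion-free polycyclic groups are of type~F).

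For case~(1), the fundamental group $\Gamma$ of a complete finite-volume manifold of pinched negative curvature is, by the work of Farb and Belegradek (using the standard Margulis-type decomposition of cusps), torsion-free and hyperbolic relative to its cusp subgroups, each of which is virtually nilpotent and torsion-free; these are therefore of type~F and, unless $\Gamma\simeq \bbZ$ (an excluded case corresponding to degenerate low-dimensional examples), the cusps are not virtually cyclic, hence NRH. Case~(3) is the direct statement, since we assume the peripherals are torsion-free nilpotent, thus type~F and (non-trivial, non-cyclic being the only interesting case) NRH. For case~(2), by Dahmani's theorem every non-abelian limit group is torsion-free and hyperbolic relative to its collection of maximal non-cyclic abelian subgroups, which are free abelian of rank $\ge 2$; these are of type~F and NRH.

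Finally, case~(4) follows from recent work on the structure of free-by-cyclic groups. If $\phi\in\Aut(F_n)$ is exponentially growing, then by results of Dahmani--Li, Ghosh, and Gautero--Lustig, the mapping torus $F_n\rtimes_\phi \bbZ$ is torsion-free and hyperbolic relative to the collection of mapping tori of maximal polynomially growing subgroups (the ``polynomial part''). Each peripheral is itself free-by-cyclic of polynomial growth, in particular of type~F (as a torsion-free virtually polycyclic-by-free extension with compact classifying space built from the train track structure) and NRH, as polynomially growing free-by-cyclic groups are thick of order one in the sense of Behrstock--Dru\c tu--Mosher (or, more directly, contain a distorted $\bbZ^2$ which obstructs relative hyperbolicity).

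The main obstacle in writing out this proof carefully is to locate, for each case, the precise reference giving both the relatively hyperbolic structure and the NRH property of the peripherals; in particular case~(4) depends on the more delicate structure theory of exponentially growing automorphisms, and one has to be careful that ``exponentially growing'' guarantees that the ambient group is not itself of polynomial growth (otherwise one would be trying to apply the theorem with empty peripheral structure to a non-hyperbolic group). Once these structural inputs are assembled, \Cref{thm:nrh} applies in each case and delivers finite-index rigidity.
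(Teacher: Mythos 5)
Your overall strategy is the same as the paper's: reduce each item to \Cref{thm:nrh} by quoting a known relatively hyperbolic structure and checking that the peripherals are type~F and NRH. However, there is a genuine gap in how you handle items (1) and (3), namely the possibility of \emph{cyclic} peripherals. Your claim in case (1) that ``unless $\Gamma\simeq\bbZ$ the cusps are not virtually cyclic'' is false: a finite-volume cusped surface of pinched negative curvature has infinite cyclic cusp subgroups while $\Gamma$ is a non-abelian free group. Likewise in case (3) the hypothesis ``hyperbolic relative to nilpotent subgroups'' allows trivial or cyclic peripherals ($\bbZ$ is nilpotent), and these are \emph{not} NRH, so \Cref{thm:nrh} cannot be applied to the given peripheral structure; your parenthetical ``non-trivial, non-cyclic being the only interesting case'' is not a case distinction on the group but a modification of the peripheral structure, and that modification needs justification. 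The paper supplies exactly this missing ingredient: by \cite[Corollary 1.14]{dructu2005tree} virtually cyclic subgroups can be removed from the peripheral collection while preserving relative hyperbolicity, after which the remaining peripherals are non-virtually-cyclic nilpotent, hence NRH by \cite[Corollary 6.14]{dructu2005tree} (and items (1),(2) are then treated as special cases of (3) via \cite{farb1998relatively} and \cite{dahmani2003combination}). Without this removal step your argument fails already for cusped surfaces and, more generally, whenever the given nilpotent peripherals include cyclic ones; you should also say what happens when removal empties the collection (the group is then hyperbolic and one falls back on \cite{lazarovich2023finite} or \cite{sykiotis2018complexity}).

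In case (4) your main route (peripherals are mapping tori of the polynomially growing part, which are thick in the sense of Behrstock--Dru\c{t}u--Mosher, hence NRH) is the paper's argument, via \cite[Corollary 3.16]{ghosh2023relative} and \cite[Theorem 1.2]{hagen2019remark} together with \cite[Corollary 7.9]{behrstock2009thick}. But your proposed shortcut ``more directly, contain a distorted $\bbZ^2$ which obstructs relative hyperbolicity'' is not correct as stated: for a unipotent (e.g.\ trivial) polynomially growing automorphism the mapping torus is $F_k\times\bbZ$, whose $\bbZ^2$ subgroups are undistorted, and in any case the mere presence of a $\bbZ^2$ subgroup does not obstruct relative hyperbolicity (it only forces $\bbZ^2$ into a peripheral). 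Stick to the thickness argument and drop that aside.
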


\begin{proof}
    Items (1) and (2) are special cases of (3), see respectively \cite{farb1998relatively} and \cite{dahmani2003combination}. For $G$ as in (3), we have that $G$ is also hyperbolic relative to a collection of non-virtually-cyclic nilpotent groups, because virtually cyclic subgroups can always be removed from the collection of peripheral subgroups preserving relative hyperbolicity, see \cite[Corollary 1.14]{dructu2005tree}, and non-virtually-cyclic nilpotent groups are NRH, see \cite[Corollary 6.14]{dructu2005tree}.

    Finally, (4) follows from \cite[Corollary 3.16]{ghosh2023relative} (see also \cite[Theorem 4]{dahmani2022relative}), which says that free-by-cyclic groups are hyperbolic relative to free-by-cyclic subgroups with respect to polynomially growing automorphisms, combined with \cite[Theorem 1.2]{hagen2019remark} (see also \cite{macura2002detour}), which says that these are thick in the sense of \cite{behrstock2009thick}, whence NRH by \cite[Corollary 7.9]{behrstock2009thick}.
\end{proof}



\bibliographystyle{plain}
\bibliography{biblio}

\begin{thebibliography}{10}

\bibitem{behrstock2009thick}
Jason Behrstock, Cornelia Dru{\c{t}}u, and Lee Mosher.
\newblock Thick metric spaces, relative hyperbolicity, and quasi-isometric rigidity.
\newblock {\em Math. Ann.}, 344(3):543--595, 2009.

\bibitem{bestvina1991boundary}
Mladen Bestvina and Geoffrey Mess.
\newblock The boundary of negatively curved groups.
\newblock {\em Journal of the American Mathematical Society}, 4(3):469--481, 1991.

\bibitem{bieri1978relative}
Robert Bieri and Beno Eckmann.
\newblock Relative homology and {P}oincar{\'e} duality for group pairs.
\newblock {\em Journal of Pure and Applied Algebra}, 13(3):277--319, 1978.

\bibitem{bowditch2012relatively}
Brian~H Bowditch.
\newblock Relatively hyperbolic groups.
\newblock {\em International Journal of Algebra and Computation}, 22(03):1250016, 2012.

\bibitem{bridson2013metric}
Martin~R Bridson and Andr{\'e} Haefliger.
\newblock {\em Metric spaces of non-positive curvature}, volume 319.
\newblock Springer Science \& Business Media, 2013.

\bibitem{brown2012cohomology}
Kenneth~S Brown.
\newblock {\em Cohomology of groups}, volume~87.
\newblock Springer Science \& Business Media, 2012.

\bibitem{dahmani2003classifying}
Fran{\c{c}}ois Dahmani.
\newblock Classifying spaces and boundaries for relatively hyperbolic groups.
\newblock {\em Proceedings of the London Mathematical Society}, 86(3):666--684, 2003.

\bibitem{dahmani2003combination}
Fran{\c{c}}ois Dahmani.
\newblock Combination of convergence groups.
\newblock {\em Geometry \& Topology}, 7(2):933--963, 2003.

\bibitem{dahmani2022relative}
Fran{\c{c}}ois Dahmani and Ruoyu Li.
\newblock Relative hyperbolicity for automorphisms of free products and free groups.
\newblock {\em Journal of Topology and Analysis}, 14(01):55--92, 2022.

\bibitem{drutu2009relatively}
Cornelia Dru{\c{t}}u.
\newblock Relatively hyperbolic groups: geometry and quasi-isometric invariance.
\newblock {\em Comment. Math. Helv.}, 84(3):503--546, 2009.

\bibitem{dructu2005tree}
Cornelia Dru{\c{t}}u and Mark Sapir.
\newblock Tree-graded spaces and asymptotic cones of groups.
\newblock {\em Topology}, 44(5):959--1058, 2005.

\bibitem{farb1998relatively}
Benson Farb.
\newblock Relatively hyperbolic groups.
\newblock {\em Geometric and functional analysis}, 8(5):810--840, 1998.

\bibitem{ghosh2023relative}
Pritam Ghosh.
\newblock Relative hyperbolicity of free-by-cyclic extensions.
\newblock {\em Compositio Mathematica}, 159(1):153--183, 2023.

\bibitem{gromov1987hyperbolic}
Mikhael Gromov.
\newblock Hyperbolic groups.
\newblock In {\em Essays in group theory}, pages 75--263. Springer, 1987.

\bibitem{groves2008dehn}
Daniel Groves and Jason~Fox Manning.
\newblock Dehn filling in relatively hyperbolic groups.
\newblock {\em Israel Journal of Mathematics}, 168:317--429, 2008.

\bibitem{hagen2019remark}
Mark Hagen.
\newblock A remark on thickness of free-by-cyclic groups.
\newblock {\em Illinois J. Math.}, 63(4):633--643, 2019.

\bibitem{kapovich2009homological}
Michael Kapovich.
\newblock Homological dimension and critical exponent of {K}leinian groups.
\newblock {\em Geometric and Functional Analysis}, 18(6):2017--2054, 2009.

\bibitem{lazarovich2023finite}
Nir Lazarovich.
\newblock Finite index rigidity of hyperbolic groups.
\newblock {\em arXiv preprint arXiv:2302.04484}, 2023.

\bibitem{macura2002detour}
Nata{\v{s}}a Macura.
\newblock Detour functions and quasi-isometries.
\newblock {\em The Quarterly Journal of Mathematics}, 53(2):207--239, 2002.

\bibitem{manning2020cohomology}
Jason~F Manning and Oliver~H Wang.
\newblock Cohomology and the {B}owditch boundary.
\newblock {\em Michigan Mathematical Journal}, 69(3):633--669, 2020.

\bibitem{mineyev2001straightening}
Igor Mineyev.
\newblock Straightening and bounded cohomology of hyperbolic groups.
\newblock {\em Geometric \& Functional Analysis GAFA}, 11(4):807--839, 2001.

\bibitem{mostow1968quasi}
George~D Mostow.
\newblock Quasi-conformal mappings in $ n $-space and the rigidity of hyperbolic space forms.
\newblock {\em Publications Math{\'e}matiques de l'IH{\'E}S}, 34:53--104, 1968.

\bibitem{mostow1973strong}
Grigory~A Mostow.
\newblock {\em Strong rigidity of locally symmetric spaces}.
\newblock Number~78. Princeton University Press, 1973.

\bibitem{osin2006relatively}
Denis~V Osin.
\newblock {\em Relatively Hyperbolic Groups: Intrinsic Geometry, Algebraic Properties, and Algorithmic Problems: Intrinsic Geometry, Algebraic Properties, and Algorithmic Problems}, volume 843.
\newblock American Mathematical Soc., 2006.

\bibitem{prasad1973strong}
Gopal Prasad.
\newblock Strong rigidity of {Q}-rank 1 lattices.
\newblock {\em Inventiones mathematicae}, 21(4):255--286, 1973.

\bibitem{reznikov1995volumes}
Alexander Reznikov.
\newblock Volumes of discrete groups and topological complexity of homology spheres.
\newblock {\em arXiv preprint dg-ga/9506010}, 1995.

\bibitem{sykiotis2018complexity}
Mihalis Sykiotis.
\newblock Complexity volumes of splittable groups.
\newblock {\em Journal of Algebra}, 503:409--432, 2018.

\end{thebibliography}

\Addresses

\end{document}